\newtheorem {theorem}{Theorem}
\newtheorem* {theorem*}{Theorem}
\newtheorem {corollary}[theorem]{Corollary}
\newtheorem {lemma}[theorem]{Lemma}
\newtheorem {proposition}[theorem]{Proposition}
\theoremstyle{definition}
\newtheorem {remark}[theorem]{Remark}
\newtheorem {definition}[theorem]{Definition}
\newtheorem {example}[theorem]{Example}
\numberwithin{theorem}{section}
\DeclareMathOperator{\Aut}{Aut}
\DeclareMathOperator{\End}{End}
\DeclareMathOperator{\Gal}{Gal}
\DeclareMathOperator{\Hom}{Hom}
\DeclareMathOperator{\Emb}{Emb}
\DeclareMathOperator{\Mat}{Mat}
\DeclareMathOperator{\id}{id}
\DeclareMathOperator{\tors}{tors}
\DeclareMathOperator{\im}{Im}
\DeclareMathOperator{\rk}{rk}
\newcommand{\Q}{\mathbb{Q}}
\newcommand{\NN}{\mathbb{N}}
\newcommand{\Z}{\mathbb{Z}}
\newcommand{\Kbar}{\overline{K}}
\newcommand{\into}{\hookrightarrow}
\newcommand{\isomto}{\overset{\sim}{\to}}
\newcommand{\set}[1]{\left\{ #1 \right\}}
\newcommand\restr[2]{\ensuremath{\left.#1\right|_{#2}}}
\renewcommand{\geq}{\geqslant}
\renewcommand{\hat}{\widehat}
\newcommand{\extension}[1]{\texorpdfstring{$#1$}{#1}-extension}
\newcommand{\mapaux}[1]{\texorpdfstring{$#1$}{#1}-map}
\newcommand{\extensions}[1]{\texorpdfstring{$#1$}{#1}-extensions}
\newcommand{\mapauxs}[1]{\texorpdfstring{$#1$}{#1}-maps}
\newcommand{\hull}[1]{\texorpdfstring{$#1$}{#1}-hull}
\newcommand{\hulls}[1]{\texorpdfstring{$#1$}{#1}-hulls}
\newcommand{\jinj}{\texorpdfstring{$J$}{J}-injective}
\newcommand{\jextension}{\extension{J}}
\newcommand{\jextensions}{\extensions{J}}
\newcommand{\jmap}{\mapaux{J}}
\newcommand{\jmaps}{\mapauxs{J}}
\newcommand{\jtextension}{\extension{(J,T)}}
\newcommand{\jtextensions}{\extensions{(J,T)}}
\newcommand{\jhull}{\hull{J}}
\newcommand{\jhulls}{\hulls{J}}
\newcommand{\pointed}{\texorpdfstring{$T$}{T}-pointed}
\newcommand{\push}{saturation}
\newcommand{\tp}{\mathfrak{sat}}
\newcommand{\jt}{\mathfrak{tor}}
\newcommand{\jtcat}{\mathfrak{JT}}
\newcommand{\inct}[1]{\mathfrak{t}_{#1}}
\newcommand{\incs}[1]{\mathfrak{s}_{#1}}
\newcommand{\divm}[3]{\ensuremath{\left(#2:_{#3}#1\right)}}
\begin{document}

\title{Division in modules and Kummer Theory}
\author{Sebastiano Tronto}
\address[]{Department of Mathematics, University of Luxembourg,
6 av.\@ de la Fonte, 4364 Esch-sur-Alzette, Luxembourg}
\email{sebastiano.tronto@uni.lu}

\begin{abstract}
In this work we generalize the concept of injective module and develop
a theory of divisibility for modules over a general ring, which provides a
general and unified framework to study Kummer-like field extensions arising
from commutative algebraic groups. With these tools we provide an effective
bound for the degree of the field extensions arising from division points of
elliptic curves, extending previous results of Javan Peykar for CM curves
and of Lombardo and the author for the non-CM case.

\end{abstract}

\maketitle

\section{Introduction}
\label{section:intro}

Let $K$ be a number field and fix an algebraic closure $\Kbar$ of $K$.
If $G$ is a commutative connected algebraic group over $K$ and $A$ is
a subgroup of $G(K)$, we may consider for every positive integer $n$
the field extension $K(n^{-1}A)$ of $K$ inside $\Kbar$ generated by all points
$P\in G(\Kbar)$ such that $nP\in A$. This is a Galois extension of $K$
containing the $n$-torsion field $K(G[n])$ of $G$.

If $G=\mathbb{G}_m$ is the multiplicative group, extensions of this kind
are studied by classical Kummer theory. Explicit results for this case
can be found for example in
\cite{ps1}, \cite{pst} and \cite{pst2}.
The more general case of an extension of
an abelian variety by a torus is treated in Ribet's foundational paper \cite{ribet}.
Under certain assumptions, for example if $G$ is the product of an abelian
variety and a torus and $A$ is free of rank $r$ with a basis of points linearly
independent over $\End_K(G)$, it is known that the ratio
\begin{align}
	\label{eqn:firstRatio}
	\frac{n^{rs}}{\left[K\left(n^{-1}A\right):K(G[n])\right]}
\end{align}
where $s$ is the unique positive integer such that $G(\Kbar)[n]\cong (\Z/n\Z)^s$
for all $n\geq 1$, is bounded independently of $n$
(see also \cite[Th\'eor\`eme 5.2]{bertrand} and \cite[Lemme 14]{hindry}).

In the case of elliptic curves, one may hope to obtain an explicit version of
this result. Indeed the results of \cite{lt} and \cite{mypaper} provide such
a statement under the assumption that $\End_K(G)=\Z$, and they show that an
effective bound depends only on the abelian group structure of $A$ and on the
$\ell$-adic Galois representations associated with the torsion of $G$ for
every prime $\ell$.

It is clear from the discussion above that the existence of non-trivial
endomorphisms defined over $K$ plays an essential role in this theory.
Without loss of generality we can take $A$ to be an $\End_K(G)$-module,
as done by Javan Peykar in his thesis \cite{abtien}.
This approach leads to an
explicit ``open image theorem'' for Kummer extensions for CM elliptic curves,
albeit under certain technical assuptions on $\End_K(G)$.

Motivated by \cite{abtien} and by the author's previous results \cite{mypaper},
most of this paper is devoted to developing a general abstract framework for
the study of certain \emph{division modules} of a fixed $R$-module $M$, where
$R$ is any unitary ring. We strive to develop this theory in a way that is
independent from the ``ambient module'' $G(\Kbar)$, taking inspiration
from \cite{pmaster} as well.

We introduce a natural generalization of the concept of injective modules,
which to the author's knowledge is novel. We also define a category of
\emph{\jtextensions{}}, which shares many interesting properties with the
category of field extensions.
We believe that these topics are interesting in their own right.

At the end of the paper we prove the following result, which was previously
known in this effective form only under certain restrictions on $\End_K(E)$:

\begin{theorem*}
	Let $E$ be an elliptic curve over a number field $K$, let
	$R=\End_K(E)$ and let $M$ be an $R$-submodule of $E(K)$.
	There exists a positive integer $c$, depending only on the
	$R$-module structure of $M$ and on the image of the
	Galois representations associated with the torsion of $E$,
	such that for every positive integer $n$
	\begin{align*}
		\frac{n^{2\rk_R(M)}}{[K(n^{-1}M):K(E[n])]} \qquad
		\text{divides} \qquad c\,.
	\end{align*}
\end{theorem*}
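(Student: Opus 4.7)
The plan is to reduce the statement for general $n$ to a prime-by-prime $\ell$-adic analysis. First I would factor $n = \prod_\ell \ell^{k_\ell}$ and use near linear-disjointness of the various $\ell$-adic towers (a standard consequence of the fact that the torsion and Kummer extensions at distinct primes are almost linearly disjoint over $K(E_{\mathrm{tors}})$) to decompose, up to a bounded ratio,
\begin{equation*}
	[K(n^{-1}M):K(E[n])] \;\approx\; \prod_\ell\, [K(\ell^{-k_\ell}M):K(E[\ell^{k_\ell}])]
\end{equation*}
so that it is enough to bound each local ratio $\ell^{2k_\ell\rk_R(M)}/[K(\ell^{-k_\ell}M):K(E[\ell^{k_\ell}])]$ uniformly in $k_\ell$, and to show that it equals $1$ for all but finitely many $\ell$.

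For each prime $\ell$ I would pass to the $\ell$-adic Tate module $T_\ell E$, viewed as a module over $R_\ell := R \otimes_\Z \Z_\ell$, and identify $M \otimes \Z_\ell$ with a finitely generated $R_\ell$-submodule of $E(\Kbar)\otimes\Z_\ell$. The Galois group of $K(\ell^{-\infty}M)/K(E[\ell^\infty])$ then embeds as an $R_\ell$-submodule of $\Hom_{R_\ell}(M\otimes\Z_\ell,\,T_\ell E)$, and the Kummer defect at $\ell$ is precisely the index of this image. This is the setting for which the abstract categories of \jtextensions{} and \jhulls{} developed in the body of the paper are tailored: the defect at $\ell$ should be bounded by a finite invariant attached to the pair $(M\otimes\Z_\ell,T_\ell E)$ together with the Galois image, produced by the \jhull{} of the relevant \jtextension{}.

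Finally, for all but finitely many $\ell$ the $\ell$-adic Galois representation has maximal open image in $\Aut_{R_\ell}(T_\ell E)$ (by Serre's open image theorem in the non-CM case, and by the classical theory of complex multiplication in the CM case); for such $\ell$ the framework forces the local Kummer defect to be trivial. For the remaining finitely many exceptional primes the same framework produces a finite bound independent of $k_\ell$, and multiplying these local bounds together with the disjointness defect yields the desired constant $c$, which by construction depends only on the $R$-module $M$ and on the Galois image in the adelic representation. I expect the main obstacle to appear in the CM case at primes $\ell$ dividing the conductor of the order $R$: there $T_\ell E$ is not free over $R_\ell$, so the ad-hoc calculations of \cite{abtien} do not apply and one must genuinely use the general-ring version of the J-hull construction, working with $R_\ell$-modules that are neither free nor even principal, which is precisely what the abstract development of the earlier sections is designed to accommodate.
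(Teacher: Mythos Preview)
Your plan is a plausible alternative strategy, but it differs substantially from the paper's route and misidentifies which part of the abstract machinery does the real work.

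The paper does \emph{not} decompose prime by prime. It works directly with the single ideal filter $J=\infty$ on $R$, so that $T=E(\Kbar)_{\tors}$ and $\Gamma=\divm{\infty}{M}{E(\Kbar)}$ are treated as one adelic object. The engine is Theorem~\ref{thm:main}: one shows $\im(\kappa)\supseteq dnm\cdot\Hom_R(\Gamma/\tp(M),T)$ by combining (i) the cohomological exact sequence of Theorem~\ref{thm:sesCohomology}, which bounds the joint kernel $\ker(\im\kappa)$ by a $dn$-torsion module via $H^1(\im\tau,T)$ and divisibility of $M$ inside $E(K)$; (ii) the duality of Proposition~\ref{prop:duality1}, which converts a bound on $\ker(\im\kappa)$ into an inclusion $V\supseteq dn\cdot\Hom$ for the $\End_R(T)$-module $V$ generated by $\im\kappa$; and (iii) the fact that $\im\tau$ generates a subring of bounded index $m$ in $\End_R(T)$, so that $\im\kappa=\im(\tau)\cdot\im(\kappa)\supseteq mV$. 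The three integers $d,n,m$ are supplied by the propositions following Remark~\ref{remark:inftyIsN}, and Theorem~\ref{thm:Main} is then immediate. No linear-disjointness argument and no dichotomy between good and exceptional primes enter.

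Your appeal to the \jhull{} construction is therefore off target: \jhulls{} are used in Section~\ref{sec:maxJText} to build maximal \jtextensions{} and set up the category, but they play no role in bounding the Kummer defect. The bounding is done entirely by the duality of Section~\ref{section:duality} together with Theorem~\ref{thm:sesCohomology}. Your $\ell$-adic outline could in principle be completed, but the steps you defer to ``the framework'' --- controlling entanglement between distinct primes, and producing a uniform-in-$k_\ell$ bound at each exceptional $\ell$ --- are precisely the content that the paper packages into the single product $dnm$; carrying them out would still require $\ell$-local analogues of conditions (1)--(3) of Theorem~\ref{thm:main}, so the prime-by-prime detour buys nothing and costs you an extra disjointness step.
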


This result follows from Theorem \ref{thm:Main}, which is essentially an application
of Theorem \ref{thm:main}, which in turn is a generalization of
\cite[Theorem 5.9]{mypaper}. The results on Galois representations needed to
apply this general theorem are mostly taken from \cite{lt}, and it can be
easily seen that the given bounds only depend on the $\ell$-adic representations,
so that the constant $c$ of our main theorem is effectively computable.

\subsection{Notation}

In this paper, rings are assumed to be unitary, but not necessarily
commutative; subrings always contain the multiplicative unit $1$.
Unless otherwise specified, by ideal of a ring we mean a right ideal and
by module over a ring we mean a left module. If $R$ is a ring and $n$ is
a positive integer, we will denote by $\Mat_{n\times n}(R)$ the ring
of $n\times n$ matrices with coefficients in $R$.

We denote by $\Z$ the integers and by $\Z_{>0}$ the set of positive integers.
If $p$ is a prime number we denote by $\Z_p$ the completion of the ring $\Z$
at the ideal $(p)$. We denote by $\hat \Z$ the product of $\Z_p$ over all primes
$p$, which we identify with $\varprojlim_{n\in\Z_{>0}}\Z/n\Z$.

\subsection{Structure of the paper}

In the Section \ref{section:jinj}
we introduce the concept of \emph{ideal filter} and of
division module by an ideal filter. This provides us with a way to generalize
the notion of injective module, and we are able to show the equivalent of
Baer's criterion for injectivity and the existence of the analogue of
injective hulls in this setting.
At the end of Section \ref{section:jinj}
we prove a certain duality result for $J$-injective
modules that will be applied in Section \ref{section:kummer}.

In Section \ref{section:jtext}
we construct the category of \jtextensions{}, our abstraction
for the modules of division points of an algebraic group. This category behaves
similarly to that of field extensions of a given field. After studying an
interesting pair of adjoint functors, we conclude this section by proving
the existence of a \emph{maximal} \jtextension{}, in analogy with field theory.

Section \ref{section:aut} is devoted to the study of automorphism groups of \jtextensions{}.
The fundamental exact sequence of Theorem \ref{thm:exactSequence} gives us a
framework to study the Galois groups of Kummer extensions associated
with a commutative algebraic group, provided that some technical assumptions
hold. This is what we do in Section \ref{section:kummer},
and we conclude by applying these results
to elliptic curves.

\subsection*{Acknowledgements}
I would like to thank my supervisors Antonella Perucca and Peter Bruin for
their constant support. I would also like to thank Hendrik Lenstra and Peter
Stevenhagen for the interesting discussion about the results of \cite{abtien}
which gave me the main ideas for this paper.
Last but not least, I would like to thank Davide Lombardo for his comments
on this paper, in particular for suggesting Remarks \ref{remark:maximalorders}
and \ref{remark:av}.


\section{\texorpdfstring{$J$}{J}-injectivity}
\label{section:jinj}

\subsection{Ideal filters and division in modules}

In order to study division in modules over a general ring, we take inspiration
from \cite{abtien}. However, instead of using Steinitz ideals (that is,
ideals of the completion of a ring), we use a more general concept that we
now introduce.

\begin{definition}
	Let $R$ be a ring. We call a non-empty set $J$ of right
	ideals of $R$ an \emph{ideal filter} if the following conditions hold:
	\begin{enumerate}
		\item If $I,I'\in J$ then $I\cap I'\in J$, and
		\item If $I\in J$ and $I'$ is a right ideal of $R$ containing
		      $I$, then $I'\in J$.
	\end{enumerate}
\end{definition}

The minimal ideal filter is $\{R\}$, while the maximal ideal filter contains
all ideals (equivalently, it contains the zero ideal): we denote the former
by $1$ and the latter by $0$.

For any ring $R$ and any set $S$ of right ideals of $R$ we call the
ideal filter \emph{generated} by $S$ the smallest ideal filter containing $S$:
it consists of all ideals of $R$ which contain a finite intersection of
elements of $S$.

\begin{example} \label{example:pInfty}
	We will be interested in the ideal filters generated by the powers
	of a given prime number $p$
	\[ p^\infty := \set{ I\text{ right ideal of } R \mid I\supseteq p^nR 
		\text{ for some } n\in\NN} \]
	and the one generated by all non-zero integers
	\[ \infty := \set{ I \text{ right ideal of } R \mid I\supseteq nR
		\text{ for some } n\in \Z_{>0}} \, .\]
	Notice that if $p^n=0$ (resp $n=0$) for some $n\in\Z_{>0}$
	then $p^\infty$ (resp. $\infty$) is
	simply the maximal ideal filter $0$. We will often consider
	such ideal filters in the case where $R$ is a commutative integral
	domain of characteristic different from $p$ (resp. characteristic $0$).
\end{example}

Fix for the remainder of this section a ring $R$.

\begin{definition}
	If $M\subseteq N$ are left $R$-modules, for any right ideal
	$I$ of $R$ we call 
	\begin{align*}
		\divm IMN :=\set{x\in N\mid Ix\subseteq M}
	\end{align*}
	the \emph{$I$-division module of $M$ in $N$}.
\end{definition}

A similar concept for ideals of $R$ is sometimes referred to as
\emph{quotient ideal}, but we deemed appropriate a change of terminology.

We can easily generalize this notion to ideal filters of $R$.

\begin{definition}
	Let $J$ be an ideal filter of $R$ and let $M\subseteq N$ be left
	$R$-modules.
	We call
	\begin{align*}
		\divm{J}{M}{N} &:= \bigcup_{I\in J} \divm{I}{M}{N}
	\end{align*}
	the
	\emph{$J$-division module of $M$ in $N$}.
	One can easily check that
	$\divm JMN$ is an $R$-submodule of $N$. 

	Moreover, we call $N[J]:=\divm J0N$
	the \emph{$J$-torsion submodule} of $N$.
	We call $N$ a \emph{$J$-torsion module} if $N=N[J]$.
\end{definition}

\begin{remark}
	If $J=0$ then $\divm JMN=N$ and $M[J]=M$. On the other hand, if
	$J=1$ then $\divm JMN = M$ and $M[J]=0$.
\end{remark}

\begin{remark} \label{remark:jdivContained}
	Let $M\subseteq N$ be left $R$-modules and let $J$ and $J'$ be ideal
	filters of $R$ with $J'\subseteq J$. If $M'\subseteq M$ and $N'\subseteq N$ 
	are submodules with $M'\subseteq N'$ then it is clear from the definition 
	of $J$-division module that $\divm{J'}{M'}{N'}\subseteq \divm JMN$.
\end{remark}

\begin{definition}
	We say that an ideal filter $J$ of $R$ is \emph{complete} if for every
	left $R$-module $N$ and every submodule $M\subseteq N$ we have
	\[ \divm J{\divm JMN}N = \divm JMN \,.\]

	We say that an ideal filter $J$ is \emph{product-closed} if for any
	$I,I'\in J$ we have $II'\in J$.
\end{definition}

\begin{proposition}
	Let $R$ be a ring and let $J$ be a product-closed ideal filter
	of $R$. If for every $I\in J$ the left ideal $RI$ is finitely generated,
	then $J$ is complete. In particular, every product-closed ideal filter
	over a left-Noetherian ring is complete.
\end{proposition}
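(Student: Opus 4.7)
The plan is to prove both inclusions in the claimed equality $\divm{J}{\divm{J}{M}{N}}{N} = \divm{J}{M}{N}$, with the nontrivial direction being the first containment. The reverse inclusion $\divm{J}{M}{N} \subseteq \divm{J}{\divm{J}{M}{N}}{N}$ follows immediately from Remark \ref{remark:jdivContained}, since $M \subseteq \divm{J}{M}{N}$.

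For the main direction, I fix $x \in \divm{J}{\divm{J}{M}{N}}{N}$ and look for some $I'' \in J$ with $I''x \subseteq M$. By definition there exists $I \in J$ with $Ix \subseteq \divm{J}{M}{N}$. The assumption that $RI$ is finitely generated as a left ideal lets me pick generators lying in $I$ itself: if $RI = Ra_1' + \cdots + Ra_n'$, each $a_i'$ is a finite $R$-linear combination of elements of $I$, so replacing the $a_i'$ by these underlying elements of $I$ I obtain $a_1,\dots,a_n \in I$ with $RI = Ra_1 + \cdots + Ra_n$. Since $a_i x \in Ix \subseteq \divm{J}{M}{N}$, for each $i$ I can choose $I_i \in J$ with $I_i a_i x \subseteq M$. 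Setting $I' := I_1 \cap \cdots \cap I_n$, this is again an element of $J$ by the intersection axiom.

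The key step is to show $(I'I)x \subseteq M$, using product-closedness to conclude $I'I \in J$ and hence $x \in \divm{I'I}{M}{N} \subseteq \divm{J}{M}{N}$. Any generator of the right ideal $I'I$ has the form $r's$ with $r' \in I'$ and $s \in I$; writing $s = \sum_j t_j a_j \in RI$ with $t_j \in R$ gives $r's\,x = \sum_j (r't_j)\,a_j x$. The crucial point is that $I'$ being a right ideal of $R$ forces $r't_j \in I' \subseteq I_j$, so each term $(r't_j)(a_j x)$ lies in $I_j a_j x \subseteq M$. Summing gives $r'sx \in M$, and by $R$-linearity $(I'I)x \subseteq M$, as required. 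The final sentence follows because over a left-Noetherian ring every left ideal $RI$ is automatically finitely generated.

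The main subtlety — and the reason one must check the argument carefully rather than relying on a commutative-algebra reflex — is the non-commutativity: I need $I'$ to be a right ideal so that $r't_j \in I'$ and therefore $r't_j \in I_j$, which is what licences the factorization through the fixed finite collection of ``witnesses'' $a_j$. Product-closedness is what packages the finite witness data back into a single element of $J$ once the generating set has been handled.
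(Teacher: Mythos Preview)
Your argument is correct and follows essentially the same route as the paper's proof: pick $I\in J$ with $Ix\subseteq\divm JMN$, choose a finite generating set for the left ideal $RI$, find the ideals $I_i\in J$ witnessing that each generator maps $x$ into $M$, intersect them to $I'$, and use that $I'$ is a right ideal to conclude $I'Ix\subseteq M$ with $I'I\in J$ by product-closedness. Your version is slightly more explicit (you arrange the generators to lie in $I$ and spell out the computation $r'sx=\sum_j(r't_j)a_jx$), but the underlying idea is identical.
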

\begin{proof}
	Let $J$ be a product-closed ideal filter of $R$ and let $M\subseteq N$ be
	left $R$-modules. The inclusion $\divm JMN\subseteq \divm J{\divm JMN}N$
	is always true, so let us prove the other inclusion.
	Let $x\in N$ be such that there is $I\in J$ with
	$Ix\subseteq \divm JMN$. Let $\{y_1,\dots y_n\}$ be a set of generators
	for the left ideal $RI$. Then for every $i=1,\dots n$ there is $I_i\in J$
	such that $I_iy_ix\subseteq M$. By definition of ideal filter we have
	$I':=\bigcap_{i=1}^nI_i\in J$ and since $J$ is product-closed we have
	$I'I\in J$. Since $\{y_1,\dots, y_n\}$ is a set of generators of the
	left ideal $RI$ and $I'$ is a right ideal we have
	$I'Ix=I'(RI)x\subseteq M$, which shows that $J$ is complete.
\end{proof}

\begin{example}
	The ideal filters introduced in Example \ref{example:pInfty} are both
	product-closed. If, for example, $R$ is Noetherian, then they are
	also complete.
\end{example}

We conclude this subsection with a list of properties of division modules.

\begin{lemma} \label{lemma:propertiesOfDivision}
	Let $M\subseteq N\subseteq P$ and $M'$ be left $R$-modules and let $J$ and
	$J'$ be ideal filters of $R$. Then the following properties hold:
	\begin{enumerate}
		\item $\divm JMN = \divm JMP\cap N$.
		\item $\divm{J}{M}{\divm JMN} =\divm JMN$.
		\item $(N/M)[J]=\divm JMN/M$.
		\item $\divm JMN=N$ if and only if $N/M$ is $J$-torsion.
		\item $(M\oplus M')[J]=M[J]\oplus M'[J]$.
	\end{enumerate}
\end{lemma}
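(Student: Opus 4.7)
The plan is to prove each of the five properties as a direct unfolding of the definition of $J$-division module, using the two defining properties of an ideal filter (closure under finite intersections and under taking supersets, the latter ensuring that $R$ itself lies in $J$).

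For (1), I would simply chase an element $x\in N$: by definition $x\in\divm{J}{M}{N}$ iff there exists $I\in J$ with $Ix\subseteq M$, which is exactly the condition $x\in\divm{J}{M}{P}$ together with $x\in N$. For (2), note that $\divm{J}{M}{N}$ is an $R$-submodule of $N$ (as remarked just after its definition), so both sides are subsets of $\divm{J}{M}{N}$. Containment from right to left is immediate because $R\in J$, and the reverse follows since any $x\in\divm{J}{M}{N}$ has some $I\in J$ with $Ix\subseteq M$, placing $x$ in $\divm{J}{M}{\divm{J}{M}{N}}$.

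For (3), I would unravel the definition of $(N/M)[J]$: an element $x+M$ lies in it iff there is $I\in J$ with $I(x+M)=0$ in $N/M$, i.e.\ $Ix\subseteq M$, which is exactly the condition $x\in\divm{J}{M}{N}$. Passing to the quotient yields the claimed equality. Property (4) is then a formal consequence of (3): $\divm{J}{M}{N}=N$ iff $\divm{J}{M}{N}/M=N/M$ iff $(N/M)[J]=N/M$.

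For (5), the direction $(M[J]\oplus M'[J])\subseteq (M\oplus M')[J]$ requires combining two annihilating ideals, say $I$ for $x\in M[J]$ and $I'$ for $x'\in M'[J]$: by the ideal filter axiom $I\cap I'\in J$, and it annihilates $(x,x')$. The reverse inclusion follows from the componentwise action of $R$: any $I$ annihilating $(x,x')$ annihilates both $x$ and $x'$ individually. This use of closure under finite intersection is the only place where an ideal filter axiom is needed beyond pure formality; none of the five statements require completeness or product-closedness of $J$, so no obstacle is expected beyond careful bookkeeping.
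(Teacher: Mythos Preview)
Your proof is correct and follows essentially the same approach as the paper's. The only differences are cosmetic: for (2) the paper simply invokes (1) (applied with ambient module $\divm{J}{M}{N}$ inside $N$) rather than arguing directly, and your appeal to $R\in J$ there is superfluous since the inclusion $\divm{J}{M}{\divm{J}{M}{N}}\subseteq\divm{J}{M}{N}$ is automatic from the ambient module; for (5) the paper first establishes $(M\oplus M')[I]=M[I]\oplus M'[I]$ for a single ideal and then takes the union, which still implicitly uses closure under intersection in the final step, just as you do explicitly.
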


\begin{proof}\leavevmode
	\begin{enumerate}
		\item The inclusion $\divm JMN \subseteq \divm JMP\cap N$ is obvious;
		      for the other inclusion it suffices to notice that if $n\in N$ is
		      such that $In\subseteq M$ for some $I\in J$ then by definition
		      $n\in\divm JMN$.
		\item Follows directly from (1).
		\item We have
			\begin{align*}
				(N/M)[J]&=\bigcup_{I\in J}(N/M) [I]=\\
						&=\bigcup_{I\in J} \set{n+M\in N/M\mid I(n+M)=M}=\\
						&=\bigcup_{I\in J} \set{n\in N\mid In\subseteq M}/M=\\
						&=\bigcup_{I\in J} \divm IMN /M=\\
						&=\divm JMN/M\,.
			\end{align*}
		\item By (3) we have that $(N/M)[J]=N/M$ if and only if $\divm JMN = N$.
		\item For any right ideal $I$ of $R$ and any $(m,m')\in M\oplus M'$
			we have that $I(m,m')=0$ if and only if $Im=Im'=0$. This implies
			that $(M\oplus M')[I]= M[I]\oplus M'[I]$, so we have
			\begin{align*}
				(M\oplus M')[J]&= \bigcup_{I\in J}(M\oplus M') [I]=\\
				&=\bigcup_{I\in J}M[I]\oplus M'[I]=\\
				&=M[J]\oplus M'[J].
			\end{align*}
	\end{enumerate}
\end{proof}

\subsection{\jmaps{} and \jextensions{}}

Fix for this section a ring $R$ and a complete ideal filter $J$
of $R$.
We introduce here some simple notions that will lead us closer to our
definition of \jtextensions{}.

\begin{definition}
	Let $M$ be a left $R$-module. An $R$-module homomorphism 
	$\varphi:M\to N$ is called a \emph{\jmap{}} if $\divm{J}{\varphi(M)}{N}=N$.
	If $\varphi$ is injective we will call it a \emph{\jextension{}},
	and we say that $N$ is a \jextension{} of $M$.
\end{definition}

\begin{remark}
	By Lemma \ref{lemma:propertiesOfDivision}(4) a homomorphism
	$\varphi:M\to N$ is a \jmap{} if and only if $N/\varphi(M)$ is $J$-torsion.
	In particular, if $J=0$ then every homomorphism of $R$-modules is
	a $J$-map.
\end{remark}

It is clear from the definition that if $\varphi:M\to N$ and $\psi:M\to P$ are
two \jmaps{} then any $R$-module homomorphism $f:N\to P$ such that
$f\circ\varphi=\psi$ is also a \jmap{}.

The following Lemma, which strongly relies on the assumption that $J$ is
complete, shows moreover that $R$-modules and \jmaps{} form a
subcategory of the category of $R$-modules.

\begin{lemma} \label{lemma:transAndGap}
	Let $M,N$ and $P$ be $R$-modules and let $\varphi:M\to N$ and
	$\psi:N\to P$ be $R$-module homomorphisms. If $\varphi$ and $\psi$ are
	\jmaps{}, then so is $\psi\circ \varphi$.
\end{lemma}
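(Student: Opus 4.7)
The plan is to unwind the definition of $J$-map and then chase elements through the pair of $J$-map hypotheses, using completeness of $J$ at the very end to absorb the two nested applications.

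First I would translate the statement: since $J$-maps are exactly those homomorphisms $f:A\to B$ with $\divm{J}{f(A)}{B}=B$, the goal is to prove that
\begin{align*}
	\divm{J}{\psi(\varphi(M))}{P}=P\,.
\end{align*}
The inclusion $\subseteq$ is automatic, so only the reverse inclusion needs work.

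The main step I would carry out is to show the intermediate inclusion
\begin{align*}
	\psi(N)\subseteq \divm{J}{\psi(\varphi(M))}{P}\,.
\end{align*}
This is a direct element chase: pick $n\in N$; since $\varphi$ is a $J$-map we have $n\in N=\divm{J}{\varphi(M)}{N}$, so there exists $I\in J$ with $In\subseteq\varphi(M)$, and applying $\psi$ gives $I\psi(n)=\psi(In)\subseteq\psi(\varphi(M))$, which is exactly the statement that $\psi(n)$ lies in the $I$-division module, hence in the $J$-division module.

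From this intermediate inclusion I would apply Remark \ref{remark:jdivContained} (monotonicity of $\divm{J}{-}{-}$ in its first argument) to obtain
\begin{align*}
	P=\divm{J}{\psi(N)}{P}\subseteq \divm{J}{\divm{J}{\psi(\varphi(M))}{P}}{P}\,,
\end{align*}
where the first equality uses that $\psi$ is a $J$-map. Finally I would invoke the hypothesis that $J$ is complete to collapse the right-hand side to $\divm{J}{\psi(\varphi(M))}{P}$, giving the required inclusion $P\subseteq\divm{J}{\psi(\varphi(M))}{P}$. The only non-formal ingredient is the use of completeness; without it one would only get, for each $p\in P$, two ideals $I,I'\in J$ with $II'p\subseteq\psi(\varphi(M))$, which is not immediately enough to place $p$ in the $J$-division module. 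I expect this collapsing step to be the main conceptual point, and it is precisely why the lemma is stated under the standing assumption that $J$ be complete.
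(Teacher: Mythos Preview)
Your proof is correct and follows essentially the same route as the paper. The only cosmetic difference is that the paper records the intermediate step as the equality $\psi(N)=\divm{J}{\psi\varphi(M)}{\psi(N)}$ (division inside $\psi(N)$) before enlarging the ambient module to $P$ via Remark~\ref{remark:jdivContained}, whereas you go directly to the inclusion $\psi(N)\subseteq\divm{J}{\psi\varphi(M)}{P}$; both versions then apply $\divm{J}{-}{P}$, use that $\psi$ is a $J$-map, and invoke completeness in exactly the same way.
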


\begin{proof}
	Since $J$ is complete we have
	\begin{align*}
		P&=\divm J{\psi(N)}P=\\
		 &=\divm J{\divm J{\psi\varphi(M)}{\psi(N)}}P\subseteq \\
		 &\subseteq \divm J{\divm J{\psi\varphi(M)}P}P= \\
		 & =\divm J{\psi\varphi(M)}P
	\end{align*}
	hence $\divm J{\psi\varphi(M)}P=P$ and $\psi\circ\varphi$ is a \jmap{}.
\end{proof}

\begin{remark} \label{rem:jmapjtorsion}
	Any homomorphism of $R$-modules $\varphi:M\to N$ such that $N$ is
	$J$-torsion is a \jmap{}. In particular, the restriction of an $R$-module
	homomorphism to the $J$-torsion submodule is a \jmap{}.
\end{remark}

The following Lemma illustrates how certain properties of a \jmap{}
largely depend on its restriction to the $J$-torsion submodule.
Recall that an injective $R$-module homomorphism $f:M\into N$ is called
an \emph{essential extension} if for every submodule $N'\subseteq N$ we have
$N'\cap f(M)=0\implies N'=0$.

\begin{lemma} \label{lemma:Jessential}
	A \jmap{} $\varphi:M\to N$ is essential if and only if
	$\restr{\varphi}{M[J]}:M[J]\to N[J]$ is.
\end{lemma}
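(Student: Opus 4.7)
The plan is to unpack both implications directly from the definition of essential extension, using only two ingredients: the injectivity of $\varphi$ (which turns the relation $I\varphi(m)=0$ into $Im=0$) and the defining property of a \jmap{} (which forces every element of $N$ to have an $I\in J$ pushing it into $\varphi(M)$). Everything else is bookkeeping between the submodules $N'\subseteq N$ and $N'\cap N[J]\subseteq N[J]$. I do not expect either direction to require Lemma \ref{lemma:transAndGap} or the completeness of $J$ — just the definitions.

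For the forward direction, assume $\varphi$ is essential and take a submodule $N'\subseteq N[J]$ with $N'\cap \varphi(M[J])=0$. I would first show that in fact $N'\cap \varphi(M)=0$: any $x=\varphi(m)\in N'\cap\varphi(M)$ lies in $N[J]$, so some $I\in J$ kills $x$, i.e.\ $\varphi(Im)=0$; injectivity of $\varphi$ forces $Im=0$, so $m\in M[J]$ and $x\in N'\cap\varphi(M[J])=0$. Essentiality of $\varphi$ then gives $N'=0$.

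For the reverse direction, assume $\varphi|_{M[J]}$ is essential and take a submodule $N'\subseteq N$ with $N'\cap\varphi(M)=0$. The key step is to show $N'\subseteq N[J]$: for any $x\in N'$, the \jmap{} property provides $I\in J$ with $Ix\subseteq\varphi(M)$, and since $N'$ is a submodule we also have $Ix\subseteq N'$, so $Ix\subseteq N'\cap\varphi(M)=0$, witnessing $x\in N[J]$. Now $N'\subseteq N[J]$ satisfies $N'\cap\varphi(M[J])\subseteq N'\cap\varphi(M)=0$, so essentiality of the restriction yields $N'=0$.

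The only place where one has to be slightly careful is the identity $N'\cap\varphi(M)=N'\cap\varphi(M[J])$ under the hypothesis $N'\subseteq N[J]$ in the forward direction; this is where injectivity of $\varphi$ is used in an essential way. If I had stated the lemma for arbitrary \jmaps{} rather than \jextensions{} the statement would fail, so the implicit injectivity assumption (inherited from the notion of essential extension) is doing real work.
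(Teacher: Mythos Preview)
Your proof is correct and follows the same overall strategy as the paper: verify both directions straight from the definition of essential extension, using injectivity of $\varphi$ in the forward direction and the \jmap{} property in the reverse.

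Your execution is in fact tidier than the paper's. In the forward direction you spell out the step $N'\cap\varphi(M)=N'\cap\varphi(M[J])$ via injectivity, which the paper asserts without comment. In the reverse direction the paper splits into cases according to whether a chosen $n\in N'$ lies in $N[J]$ or not, treating the non-torsion case by finding $r\in I$ with $0\neq rn\in\varphi(M)$ (and handling $J=0$ separately at the outset to guarantee $0\notin J$). Your observation that $N'\cap\varphi(M)=0$ forces $N'\subseteq N[J]$ outright collapses this case distinction and makes the separate treatment of $J=0$ unnecessary. Both arguments leave the injectivity of $\varphi$ itself as an implicit hypothesis, as you note; this is consistent with the paper's usage, where the lemma is only ever applied to \jextensions{}.
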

\begin{proof}
	Notice that the statement is trivially true in case $J=0$, so we
	may assume that $J\neq 0$.
	If $\varphi$ is essential then clearly so is $\restr \varphi{M[J]}$,
	because any submodule $N'$ of $N[J]$ such that $N'\cap \varphi(M[J])=0$ 
	is in particular a submodule of $N$ such that $N'\cap \varphi(M)=0$.

	Assume than that $\restr \varphi{M[J]}:M[J]\to N[J]$ is essential.
	Let $N'\subseteq N$ be a non-trivial submodule and let $n\in N'$ be a
	non-zero element. If $n\in N[J]$ then $N'\cap N[J]$ is non-trivial, and
	since $\restr \varphi{M[J]}$ is essential then $N'\cap \varphi(M)[J]$ is
	non-trivial as well. So we may assume that $n\not\in N[J]$.

	Since $\varphi:M\to N$ is a \jmap{}, there is $I\in J$ such that
	$In\subseteq\varphi(M)$. In particular, since $0\not \in J$ and $n$ is
	not $J$-torsion, there is $r\in R$ such that $0\neq rn\in \varphi(M)$.
	Since $N'$ is a submodule we have $rn\in N'\cap \varphi(M)$,
	so $\varphi:M\to N$ is an essential extension.
\end{proof}

\begin{lemma} \label{lemma:diffIsTorsion}
	Let $\varphi:M\to N$ be a $J$-map and let $f,g:N\to P$ be
	$R$-module homomorphisms such that $f\circ\varphi=g\circ\varphi$.
	Then for every $n\in N$ we have $f(n)-g(n)\in P[J]$.
\end{lemma}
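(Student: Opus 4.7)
The plan is to work with the difference $h := f - g$, which is an $R$-module homomorphism $N \to P$ satisfying $h \circ \varphi = 0$ by hypothesis. The goal then reduces to showing that the image of $h$ lies entirely in $P[J]$, i.e.\ that $h(n) \in P[J]$ for every $n \in N$.

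First, I would unpack what it means for $\varphi$ to be a \jmap{}. By Lemma \ref{lemma:propertiesOfDivision}(4) together with the remark immediately following the definition of \jmap{}, the quotient $N/\varphi(M)$ is $J$-torsion. Therefore, given any $n \in N$, there exists an ideal $I \in J$ such that $In \subseteq \varphi(M)$.

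Now I would exploit the linearity of $h$ and the hypothesis $h \circ \varphi = 0$: since $In \subseteq \varphi(M)$, we have
\begin{align*}
	I \cdot h(n) = h(In) \subseteq h(\varphi(M)) = 0,
\end{align*}
so $h(n) \in P[I] \subseteq P[J]$ by definition of $J$-torsion submodule. This gives $f(n) - g(n) \in P[J]$ as required.

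There is really no obstacle here — the argument is a direct unwinding of definitions, and the only subtlety is correctly citing the characterization of \jmaps{} via $J$-torsion quotients. Note in particular that completeness of $J$ is not needed for this lemma; the statement is essentially tautological once one recognizes that $N/\varphi(M)$ being $J$-torsion forces the induced map $N/\varphi(M) \to P$ given by $h$ to land in the $J$-torsion of $P$.
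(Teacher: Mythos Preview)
Your proof is correct and follows essentially the same approach as the paper: pick $I\in J$ with $In\subseteq\varphi(M)$ and deduce that $I$ annihilates $f(n)-g(n)$. If anything, your version is slightly cleaner, since you work with the whole ideal $I$ at once via the difference map $h=f-g$, whereas the paper's write-up singles out an element $r\in I$; your observation that completeness of $J$ is not used here is also correct.
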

\begin{proof}
	The statement is clearly true for $J=0$, so we may assume that $J\neq 0$.
	Since $\divm J {\varphi(M)}N=N$ there is $I\in J$ such that
	$In\subseteq \varphi(M)$. In particular there is a non-zero
	$r\in I$ such that $rn\in \varphi(M)$, say $rn=\varphi(m)$ for some
	$m\in M$. This implies that
	\begin{align*}
		r(f(n)-g(n))=f(\varphi(m))-g(\varphi(m)) = 0
	\end{align*}
	thus $f(n)-g(n)\in P[J]$.
\end{proof}

\subsection{\jinj{} modules and \jhulls{}}

Fix for this section a ring $R$ and a complete ideal filter $J$ of $R$.
We introduce the notion of \emph{\jinj{} module}, which generalizes the
classical notion of injectivity.

\begin{definition}
	A left $R$-module $Q$ is called \emph{\jinj{}}
	if for every \jextension{}
	$i:M\into N$ and every $R$-module homomorphism $f:M\to Q$ there
	exists a homomorphism $g:N\to Q$ such that $g\circ i=f$.
\end{definition}

\begin{remark}
	Notice that in case $J=0$ the definition of $J$-injective $R$-module
	coincides with that of injective module.
	Moreover, if $J'$ is a complete ideal filter of $R$ such that
	$J'\subseteq J$, then a $J$-injective module is also $J'$-injective.
\end{remark}


\begin{example}
	A $\Z$-module is $p^\infty$-injective if and only if it is $p$-divisible
	as an abelian group. The proof of this fact is completely analogous to
	that of the well-known result that a $\Z$-module is injective if and only
	if it is divisible.
\end{example}

The following proposition is an analogue of the well-known Baer's criterion
in the classical case of injective modules.

\begin{proposition} \label{prop:baerCriterion}
	A left $R$-module $Q$ is \jinj{} if and only if for every two-sided
	ideal $I\in J$ and every $R$-module homomorphism $f:I\to Q$ there is
	an $R$-module homomorphism $g:R\to Q$ that extends $f$.
\end{proposition}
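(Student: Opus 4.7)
The forward direction is immediate. If $Q$ is \jinj{} and $I\in J$ is a two-sided ideal with an $R$-linear map $f\colon I\to Q$, I recognise the inclusion $I\hookrightarrow R$ as a \jextension{}: by Lemma~\ref{lemma:propertiesOfDivision}(4) this reduces to $R/I$ being $J$-torsion, and indeed for every $r\in R$ we have $I\cdot(r+I) = Ir+I = I$ in $R/I$, using two-sidedness of $I$ to get $Ir\subseteq I$. The definition of \jinj{} then provides the required extension $g\colon R\to Q$.

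For the converse I would run a Zorn's lemma argument on the poset of partial extensions $(P, g_P)$ with $M\subseteq P\subseteq N$ and $g_P|_M = f$, ordered by restriction. Let $(P_0, g_0)$ be a maximal element. Assume for contradiction that $P_0\neq N$ and pick $n\in N\setminus P_0$. The inclusion $P_0\hookrightarrow N$ is still a \jextension{} by Remark~\ref{remark:jdivContained}, so there is a right ideal $I\in J$ with $In\subseteq P_0$. The crucial reduction is to replace $I$ by the two-sided ideal $I^* := RI$: using $IR = I$ (since $I$ is a right ideal) we get $RIR = RI$, so $I^*$ is two-sided, contains $I$, and hence lies in $J$ by upward closure, and it satisfies $I^*n = R(In)\subseteq P_0$. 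Applying the hypothesis to the $R$-linear map $\phi\colon I^*\to Q$, $s\mapsto g_0(sn)$, produces an extension $\tilde\phi\colon R\to Q$; setting $q := \tilde\phi(1)$ gives $g_0(sn) = sq$ for every $s\in I^*$.

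I would then define $g_1\colon P_0+Rn\to Q$ by $g_1(p+sn) := g_0(p)+sq$; once well-definedness is secured, $g_1$ is a strict extension of $g_0$, contradicting maximality. The main obstacle is exactly this well-definedness: if $p+sn=p'+s'n$ then $r:=s-s'$ lies in the left ideal $L:=\{t\in R : tn\in P_0\}$, which contains $I^*$, and one must have $rq = g_0(rn)$ for every $r\in L$, whereas the hypothesis secures this only on $I^*$. In the commutative case the gap closes at once: $L$ is then also a right ideal containing $I^*\in J$, hence $L\in J$ by upward closure, and applying the hypothesis to $\phi$ extended over $L$ delivers $q$ directly. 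In general I would exploit the freedom in the choice of $\tilde\phi$: two extensions of $\phi|_{I^*}$ differ by a homomorphism $R/I^*\to Q$, i.e.\ by an element of $Q[I^*]$, and a short computation (using $s r n\in P_0$ and $sq = g_0(sn)$ for $s\in I^*$) shows that the obstruction $r\mapsto g_0(rn)-rq$ already takes values in $Q[I^*]$, reducing the residual problem to one inside the $J$-torsion submodule where $q$ can be adjusted to kill it.
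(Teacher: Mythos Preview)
Your forward direction and the Zorn's-lemma setup match the paper. The divergence---and the gap---is in which two-sided ideal you feed into the hypothesis.

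You take $I^*=RI$ for a right ideal $I\in J$ with $In\subseteq P_0$, so that $I^*\subseteq L:=\{r\in R:rn\in P_0\}$. As you correctly diagnose, well-definedness of $g_1$ on $P_0+Rn$ requires $rq=g_0(rn)$ for every $r\in L$, while the hypothesis secures this only on $I^*$. Your proposed fix does not close the loop: adjusting $q$ by some $q'\in Q[I^*]$ so that $r(q+q')=g_0(rn)$ on all of $L$ is exactly asking for an extension of the left-$R$-linear map $L\to Q[I^*]$, $r\mapsto g_0(rn)-rq$, to a map $R\to Q[I^*]$ of the form $r\mapsto rq'$. That is again an extension problem from a \emph{left} ideal that need not be two-sided, and the Baer-type hypothesis says nothing about such ideals; the target being $J$-torsion does not make this extension problem any easier. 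So in the non-commutative case your argument remains incomplete.

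The paper goes the other way around: it lets $I$ be the two-sided ideal \emph{generated by} $L$ (still in $J$, since $L$ contains some $I'\in J$), so that $L\subseteq I$ rather than $I^*\subseteq L$. With this choice the extension $h\colon R\to Q$ of $y\mapsto g'(yx)$ already satisfies $h(r)=g'(rx)$ for every $r\in L\subseteq I$; since $\Ann(x)\subseteq L\subseteq I$ and $h$ vanishes there, $h$ factors through $Rx$ as $h'\colon Rx\to Q$, and $h'$ agrees with $g'$ on $N'\cap Rx$ by construction, yielding $g''\colon N'+Rx\to Q$ with no residual adjustment needed. (One may note that the paper's argument tacitly uses $Ix\subseteq N'$ so that $y\mapsto g'(yx)$ is defined on all of $I$; in the commutative case $I=L$ and this is immediate.)
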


\begin{proof}
	The ``only if'' part is trivial, because any two-sided ideal of $R$
	is also a left $R$-module and $I\into R$ is a \jextension{} if
	$I\in J$.  For the other implication, let $i:M\into N$ be a
	\jextension{} and let $f:M\to Q$ be any $R$-module homomorphism.
	By Zorn's Lemma there is a submodule $N'$ of $N$ and an extension
	$g':N'\to Q$ of $f$ to $N'$ that is maximal in the sense that 
	it cannot be extended to any larger submodule of $N$. If $N'=N$ we are 
	done, so assume that $N'\neq N$ and let $x\in N\setminus N'$.

	Let $I$ be the two-sided ideal of $R$ generated by $\set{r\in R\mid rx\in 
	N'}$. Since $i(M)\subseteq N'$ and $\divm J{i(M)}N=N$ there is $I'\in J$
	such that $I'x\subseteq N'$, which implies $I'\subseteq I$, so also
	$I\in J$.  By assumption the map $I\to Q$ that sends 
	$y\in I$ to $g'(yx)$ extends to a map $h:R\to Q$. Since $\ker(R\to Rx)$ is 
	contained in $\ker(h)$, the map $h$ gives rise to a map $h':Rx\to Q$ by 
	sending $rx\in Rx$ to $h(r)$. By definition the restrictions of $g'$ and 
	$h'$ to $N'\cap Rx$ coincide, so we can define a map $g'':N'+Rx\to Q$ that 
	extends both. This contradicts the maximality of $g'$, so we conclude that 
	$N'=N$.
\end{proof}

\begin{remark}
	Let $R$ be an integral domain and let $J$ be the ideal filter $0$ on $R$.
	Then the set of ideals $J'=J\setminus\{0\}$ is an ideal filter.
	Using Proposition \ref{prop:baerCriterion} one can easily show that an
	$R$-module $Q$ is $J$-injective if and only if it is $J'$-injective.
	Indeed, one implication holds, as remarked above, because $J\subseteq J'$,
	and for the other it is enough to notice that the unique map $0\to Q$ can
	always be extended to the zero map on $R$.

	One advantage of using $J'$ instead of $J$ is that the $J'$-torsion
	submodule may be different from the whole module.
\end{remark}

\begin{example} \label{example:localizationjinj}
	Let $M$ be an abelian group, let $p$ be a prime and let $J=p^\infty$ be
	the ideal filter of $\Z$ introduced in Example \ref{example:pInfty}.
	Then the localization $M[p^{-1}]$ is a \jinj{} $\Z$-module.
	Indeed if $i:N\into P$ is a \jextension{} and $f:N\to M[p^{-1}]$ is any
	homomorphism then for every $x\in P$ there is $k\in\NN$ such that
	$p^kx\in i(N)$, and one can define $g(x):=\frac{f(p^kx)}{p^k}$.
	It is easy to check that $g$ is then a well-defined group homomorphism
	such that $g\circ i=f$.
\end{example}

\begin{proposition} \label{proposition:EssentialExtensionOfInjective}
	Let $M$ be a \jinj{} $R$-module.
	If $f:M\into N$ is an essential $J$-extension, then it is an isomorphism.
\end{proposition}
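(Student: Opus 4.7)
The plan is to mimic the classical proof that an essential extension of an injective module is an isomorphism, taking advantage of the fact that the definition of \jinj{} module applies exactly to \jextensions{}.

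First I would apply the $J$-injectivity of $M$ to the $J$-extension $f:M\into N$ and the identity map $\id_M:M\to M$. This yields an $R$-module homomorphism $g:N\to M$ such that $g\circ f=\id_M$. In particular $f$ is a split monomorphism, so $N$ decomposes as an internal direct sum $N=f(M)\oplus\ker(g)$: given $n\in N$, write $n=f(g(n))+(n-f(g(n)))$, where the second summand lies in $\ker(g)$ since $g\circ f=\id_M$, and uniqueness of the decomposition follows from the same identity.

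Next I would use the assumption that $f$ is an essential extension. By construction $\ker(g)\cap f(M)=0$, so essentiality forces $\ker(g)=0$. Combined with the surjectivity of $g$ (which is immediate from $g\circ f=\id_M$), this shows that $g$ is an isomorphism, and therefore so is $f=g^{-1}$ (or equivalently, from $N=f(M)\oplus\ker(g)=f(M)$ we directly conclude that $f$ is surjective).

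There is no real obstacle here: the only thing to check is that the definition of \jinj{} applies, and it does precisely because $f:M\into N$ is assumed to be a \jextension{}. The argument does not even use that $f$ is essential on the torsion part (as in Lemma \ref{lemma:Jessential}); the essentiality is only needed in its raw form to kill the complementary summand $\ker(g)$.
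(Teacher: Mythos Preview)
Your proof is correct and follows essentially the same route as the paper's: apply $J$-injectivity to obtain a retraction $g$ with $g\circ f=\id_M$, then use essentiality of $f$ to conclude $\ker(g)=0$ and hence that $g$ (and therefore $f$) is an isomorphism. The paper's version is terser, omitting the explicit direct-sum decomposition and simply noting that $g$ is surjective and, by essentiality, injective; your elaboration via $N=f(M)\oplus\ker(g)$ is a harmless unpacking of the same argument.
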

\begin{proof}
	By definition of $J$-injectivity
	there is a map $g:N\to M$ such that $g\circ f= \id_M$.
	Then $g$ is surjective and since $f$ is an essential extension
	then $g$ is also injective, so it is an isomorphism.
\end{proof}

Recall that an \emph{injective hull} of an $R$-module $M$ is an essential
extension $i:M\into N$ such that $N$ is injective as an $R$-module.
It is well-known that every $R$-module $M$ admits
an injective hull and that any two injective hulls $i:M\into \Omega$ and
$j:M\into \Gamma$ are isomorphic via a (not necessarily unique) isomorphism
that commutes with $i$ and $j$, see \cite{baer}, \cite{injektive} or
\cite{fleischer}.

\begin{lemma} \label{lemma:InjectiveHull}
	Let $R$ be a ring and let $M$ be a left $R$-module. If $i:M\into
	\Omega$ is an injective hull and $j:M\into N$ is an essential extension,
	there is an injective $R$-module homomorphism $\varphi:N\into\Omega$ such
	that $\varphi\circ j=i$. Moreover, $\varphi:N\into\Omega$ is an injective
	hull.
\end{lemma}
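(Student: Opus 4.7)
The plan is to first build $\varphi$ using the injectivity of $\Omega$, then use the essentiality of $j$ to get injectivity of $\varphi$, and finally use the essentiality of $i$ to conclude that $\varphi$ itself is essential (hence an injective hull since $\Omega$ is already injective).

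First, I would apply the defining property of injectivity of $\Omega$ to the inclusion $j\colon M\into N$ and the map $i\colon M\to\Omega$: since $j$ is injective and $\Omega$ is an injective $R$-module, there exists an $R$-module homomorphism $\varphi\colon N\to\Omega$ with $\varphi\circ j=i$.

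Next I would show that $\varphi$ is injective. Consider $\ker\varphi\subseteq N$. Any element $x\in\ker\varphi\cap j(M)$ can be written $x=j(m)$ with $0=\varphi(x)=i(m)$; since $i$ is injective this forces $m=0$ and hence $x=0$. Thus $\ker\varphi\cap j(M)=0$, and since $j\colon M\into N$ is an essential extension, this implies $\ker\varphi=0$.

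Finally I would verify that $\varphi\colon N\into\Omega$ is an essential extension; combined with the injectivity of $\Omega$ (which we already have by hypothesis), this will give that $\varphi$ is an injective hull of $N$. Let $N'\subseteq\Omega$ be a non-zero submodule. Since $i\colon M\into\Omega$ is an injective hull, it is in particular an essential extension, so $N'\cap i(M)\neq 0$. But $i(M)=\varphi(j(M))\subseteq\varphi(N)$, so $N'\cap\varphi(N)\supseteq N'\cap i(M)\neq 0$, proving essentiality.

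There is no real obstacle here: the argument is entirely formal and reduces to the defining properties of injectivity and essentiality. The only point worth stressing is that the second step genuinely uses that $j$ (not $i$) is essential, while the third step uses that $i$ is essential, so the two essentiality hypotheses play distinct roles in producing an injective hull structure on $\varphi$.
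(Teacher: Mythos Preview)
Your proof is correct and follows essentially the same route as the paper's: extend $i$ along $j$ by injectivity of $\Omega$, use essentiality of $j$ together with injectivity of $i$ to see $\varphi$ is injective, and use essentiality of $i$ to see $\varphi$ is essential. The paper states these steps more tersely, but the content is identical.
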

\begin{proof}
	Since $\Omega$ is injective there exists an $R$-module homomorphism 
	$\varphi: N\to \Omega$ such that $\varphi\circ j=i$. Since $i$ is injective 
	and $j$ is an essential extension, then also $\varphi$ is injective.

	The last part follows from the fact that $\Omega$ is injective and
	$\varphi: N\into \Omega$ is an essential extension, since
	$i:M\into \Omega$ is.
\end{proof}

We conclude this section by proving that every $R$-module admits a
\emph{\jhull{}}, which is the generalization of an injective hull:

\begin{definition}
	Let $M$ be a left $R$-module. A \jextension{} $\iota:M\into \Omega$ is 
	called a \emph{\jhull{}} of $M$ if it is an essential extension and 
	$\Omega$ is \jinj{}.
\end{definition}

\begin{remark}
	If $J=0$ the definition of \jhull{} coincides with that of injective hull.
\end{remark}

\begin{remark} \label{remark:jhullsum}
	If $f_i:M_i\into N_i$, for $i=1,\dots, k$, are \jhulls{}, then the finite
	sum
	\begin{align*}
		\oplus_if_i:\bigoplus_{i=1}^kM_i\into \bigoplus_{i=1}^k N_i
	\end{align*}
	is a \jhull{}. Indeed $\bigoplus_iN_i$ is \jinj{} because it is a finite
	direct sum of $J$-injective modules, and it is easy to see that it is
	also an essential \jextension{} of $\bigoplus_iM_i$.
\end{remark}

\begin{lemma} \label{lemma:InjectiveSub}
	Let $Q$ be a \jinj{} $R$-module and let $P\subseteq Q$ be any
	submodule. Then $\divm JPQ$ is \jinj{}.
\end{lemma}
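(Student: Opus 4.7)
The plan is to deduce $J$-injectivity of $\divm JPQ$ directly from the $J$-injectivity of $Q$ by extending a homomorphism first into $Q$ and then observing that its image lands in $\divm JPQ$ thanks to the completeness of $J$.

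More precisely, I would start with an arbitrary \jextension{} $i:M\into N$ and an $R$-module homomorphism $f:M\to\divm JPQ$. Composing $f$ with the inclusion $\divm JPQ\into Q$ gives a homomorphism $M\to Q$, and since $Q$ is \jinj{} there exists $\tilde g:N\to Q$ with $\tilde g\circ i$ equal to $f$ (regarded as landing in $Q$). The only thing left to check is that $\tilde g(N)\subseteq \divm JPQ$, since then $\tilde g$ itself, with codomain restricted to $\divm JPQ$, is the desired extension and the \jinj{}ity follows.

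To verify this inclusion, fix $n\in N$. Since $i$ is a \jextension{}, $\divm J{i(M)}N=N$, so there exists $I\in J$ with $In\subseteq i(M)$. Then
\begin{align*}
	I\tilde g(n)=\tilde g(In)\subseteq \tilde g(i(M))=f(M)\subseteq \divm JPQ\,.
\end{align*}
This shows $\tilde g(n)\in\divm J{\divm JPQ}Q$. The crucial final step is that $J$ is complete, so $\divm J{\divm JPQ}Q=\divm JPQ$, giving $\tilde g(n)\in\divm JPQ$ as required.

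The only conceptual difficulty is recognising that completeness of $J$ is exactly what allows the iterated division to collapse; without it, $\tilde g$ would only be guaranteed to land in $\divm J{\divm JPQ}Q$, which could in principle be larger. Everything else reduces to the definitions and the hypothesis that $Q$ is \jinj{}. A variant proof via Proposition \ref{prop:baerCriterion} (Baer's criterion) would also work, replacing the generic \jextension{} $M\into N$ by the inclusion of a two-sided ideal $I\in J$ into $R$, but the direct approach above seems cleaner and makes the role of completeness more transparent.
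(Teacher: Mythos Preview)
Your proof is correct and follows essentially the same route as the paper's own argument: extend $f$ to a map $N\to Q$ using the $J$-injectivity of $Q$, then show the image lands in $\divm JPQ$. The paper's proof is slightly more terse and leaves the appeal to completeness of $J$ implicit (it concludes directly from $Ig(x)\subseteq j(f(M))\subseteq\divm JPQ$ that $g(x)\in\divm JPQ$), whereas you spell this step out explicitly---a minor gain in clarity, but not a genuinely different approach.
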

\begin{proof}
	Let $i:M\into N$ be a \jextension{} and let $f:M\to \divm JPQ$ be
	any $R$-module homomorphism. Denote by $j:\divm JPQ\into Q$ the
	inclusion. Since $Q$ is \jinj{}, there is a map $g:N\to Q$ such that
	$g\circ i = j\circ f$. For every $x\in N$ there is some $I\in J$
	such that $Ix\subseteq i(M)$ and thus $Ig(x)=g(Ix)\subseteq 
	g(i(M))=j(f(M))$, which means that the image of $g$ is contained in
	$\divm JPQ$. This shows that $\divm JPQ$ is \jinj{}.
\end{proof}

\begin{theorem} \label{theorem:Jhull}
	Every left $R$-module $M$ admits a \jhull{}. Moreover,
	the following holds for any \jhull{} $\iota:M\into \Omega$ of $M$:
	\begin{enumerate}
		\item For every \jextension{} $i:M\into N$ there is a
		      \jhull{} $j:N\into\Omega$ with $j\circ i=\iota$.
		\item For every \jhull{} $\iota':M\into\Omega'$ there is an
		      isomorphism $\varphi:\Omega \isomto \Omega'$ with
		      $\varphi\circ \iota=\iota'$.
	\end{enumerate}
\end{theorem}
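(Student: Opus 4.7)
The plan is to adapt the classical construction of an injective hull, cutting down from the injective case to the $J$-setting via the $J$-division module. For existence, I would start from a classical injective hull $\epsilon : M \into E$, whose existence is cited just before the theorem. Since every right ideal of $R$ lies in the maximal filter $0$, a $0$-map is just any $R$-module homomorphism and a $0$-extension is just an injection, so $0$-injectivity coincides with classical injectivity; because $J \subseteq 0$, the remark following the definition of $J$-injectivity then gives that $E$ is $J$-injective. I would then set $\Omega := \divm{J}{\epsilon(M)}{E}$. Lemma \ref{lemma:InjectiveSub} ensures $\Omega$ remains $J$-injective, and by the very definition of the $J$-division module the corestriction $\iota : M \into \Omega$ is a \jextension{}. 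Essentiality of $\iota$ is inherited from $\epsilon$ because $\Omega \subseteq E$: any nonzero submodule of $\Omega$ is a fortiori a nonzero submodule of $E$, so it meets $\epsilon(M)=\iota(M)$.

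For part (1), given a \jextension{} $i : M \into N$ and the \jhull{} $\iota : M \into \Omega$, I would use $J$-injectivity of $\Omega$ to obtain a lift $j : N \to \Omega$ with $j \circ i = \iota$. The key step, and in my view the main obstacle, is proving that $j$ is injective; once this is done, the chain $\iota(M) \subseteq j(N) \subseteq \Omega$ combined with Remark \ref{remark:jdivContained} immediately shows that $j$ is a \jmap{}, and essentiality of $\iota$ transfers to $j$ because every nonzero submodule of $\Omega$ meets $\iota(M) \subseteq j(N)$. To prove injectivity, I would first observe that $\ker(j) \subseteq N[J]$: for $n \in \ker(j)$, pick $I \in J$ with $In \subseteq i(M)$; then every $r \in I$ yields $rn = i(m)$ with $\iota(m) = j(rn) = 0$, so $m=0$ and $rn=0$, giving $In=0$. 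Since $\iota = j \circ i$ is injective we also have $\ker(j) \cap i(M) = 0$; combining this with Lemma \ref{lemma:Jessential}, which reduces essentiality of $\iota$ to essentiality of $\iota|_{M[J]}$ on $N[J]$, I would conclude $\ker(j) = 0$.

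For part (2), given another \jhull{} $\iota' : M \into \Omega'$, I would invoke $J$-injectivity of $\Omega'$ to obtain $\varphi : \Omega \to \Omega'$ with $\varphi \circ \iota = \iota'$. Injectivity of $\iota'$ gives $\ker(\varphi) \cap \iota(M) = 0$, and essentiality of $\iota$ then forces $\ker(\varphi) = 0$. A symmetric argument using essentiality of $\iota'$ shows that $\varphi : \Omega \into \Omega'$ is itself an essential extension, and $\varphi$ is a \jmap{} because $\Omega'$ is a \jextension{} of $\iota'(M) \subseteq \varphi(\Omega)$. Thus $\varphi$ is an essential \jextension{} of the \jinj{} module $\Omega$, and Proposition \ref{proposition:EssentialExtensionOfInjective} yields that it is an isomorphism.
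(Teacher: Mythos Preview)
Your existence argument and your proof of (2) are correct and follow the paper's approach closely; for (2) you argue directly via Proposition~\ref{proposition:EssentialExtensionOfInjective}, whereas the paper routes through (1), so your route is in fact cleaner and independent of (1).

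The genuine gap is in part (1), at the step where you invoke Lemma~\ref{lemma:Jessential}. That lemma concerns a single \jmap{} and relates its essentiality to that of its restriction to the $J$-torsion; it does not let you transfer information about $\iota:M\into\Omega$ to the map $i|_{M[J]}:M[J]\to N[J]$, which is what you would need. You have correctly established $\ker(j)\subseteq N[J]$ and $\ker(j)\cap i(M)=0$, but to conclude $\ker(j)=0$ from this you would need $i|_{M[J]}:M[J]\into N[J]$ to be essential, and nothing in the hypotheses gives that. In fact part (1) is false as stated: take $R=\Z$, $J=p^\infty$, $M=\Z$ with \jhull{} $\Omega=\Z[p^{-1}]$ (as in the example following the theorem), and $N=\Z\oplus\Z/p\Z$ with $i(n)=(n,0)$. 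Then $N/i(M)\cong\Z/p\Z$ is $J$-torsion, so $i$ is a \jextension{}, yet any $R$-linear $j:N\to\Z[p^{-1}]$ with $j\circ i=\iota$ must send $(0,1)$ to $0$ because $\Z[p^{-1}]$ is torsion-free, so no injective $j$ exists. The paper's own one-line argument for (1) has the same defect: the clause ``since $\iota:M\into\Omega$ is an essential extension also $j:N\into\Omega$ is'' only yields that the inclusion $j(N)\into\Omega$ is essential, not that $j$ is injective. The statement becomes correct if one additionally assumes that $i$ is essential, in parallel with Lemma~\ref{lemma:InjectiveHull}; it is only this strengthened hypothesis (indeed, the trivial case $i=\id_\Omega$) that the paper actually uses when proving (2).
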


\begin{proof}
	Let $\iota:M\into\Gamma$ be an injective hull of $M$ and let 
	$\Omega:=\divm{J}{\iota(M)}{\Gamma}$.
	Since $\iota:M\into\Gamma$ is an essential extension
	then also $\iota:M\into\Omega$ is, and by Lemma 
	\ref{lemma:propertiesOfDivision}(2) we have $\divm{J}{\iota(M)}{\Omega}= 
	\Omega$, so $\iota:M\into\Omega$ is a \jextension{} of $M$.
	By Lemma \ref{lemma:InjectiveSub} the $R$-module $\Omega$ is \jinj{},
	so it is a \jhull{} of $M$.

	For (1), since $\Omega$ is \jinj{} there is a map 
	$j:N\to \Omega$ such that $j\circ i=\iota$. Moreover since $\iota:M\into 
	\Omega$ is an essential extension also $j:N\into \Omega$ is, so it is a 
	\jhull{}.

	For (2), let $\iota:M\into\Omega$ and $\iota':M\into\Omega'$ be two 
	\jhulls{}. Since $\Omega'$ is \jinj{} there is an $R$-module homomorphism 
	$f:\Omega\to \Omega'$ such that $f\circ\iota=\iota'$, so since $\iota$ is
	an essential extension $f$ is injective. But
	then, since $\id_\Omega:\Omega\into\Omega$ is a \jhull{} by (1), there is 
	an $R$-module homomorphism $g:\Omega'\to \Omega$ such that $g\circ 
	f=\id_\Omega$, so in particular $g$ is surjective. But we also have
	$g\circ \iota'=\iota$, and since $\iota'$ is an essential extension then
	$g$ must be injective too, hence it is an isomorphism.
\end{proof}

\begin{example}
	Let $M$ be a finitely generated abelian group,
	let $p$ be a prime number and let $J=p^\infty$ be
	the ideal filter of $\Z$ introduced in Example \ref{example:pInfty}.
	Write $M$ as
	\begin{align*}
		M=\Z^r \oplus \bigoplus_{i=1}^k \Z/p^{e_i}\Z\oplus M[n] 
	\end{align*}
	where $n$ is a positive integer coprime to $p$ and the $e_i$'s are suitable
	exponents. Let
	\begin{align*}
		\Gamma=(\Z[p^{-1}])^r\oplus(\Z[p^{-1}]/\Z)^k\oplus M[n]
	\end{align*}
	and
	\begin{align*}
		\begin{array}{lccc}
			\iota: & M & \to & \Gamma \\
				   & (z, (s_i\bmod{p^{e_i}})_{i}, t) & \mapsto &
				     \left(\frac z1,
					 \left(\frac{s}{p^{e_i}}\bmod{\Z}\right)_{i} ,
				     t\right)
		\end{array}
	\end{align*}
	Then $\iota:M\to \Gamma$ is a \jhull{}. To see this it is enough to
	show that $f:\Z^r\into (\Z[p^{-1}])^r$ and
	$g_i:\Z/p^{e_i}\Z\into \Z[p^{-1}]/\Z$ for every $i=1,\dots, k$ are
	\jhulls{}, and that $M[n]$ is \jinj{}, being trivially an essential
	extension of itself. The assertions about $f$ and $M[n]$ follow from
	Example \ref{example:localizationjinj}, noticing that multiplication by
	$p$ is an automorphism of $M[n]$ and that $\Z^r\into (\Z[p^{-1}])^r$ is an
	essential \jextension{}.

	So we are left to show that for every positive integer $e$ the map
	$g:\Z/p^e\Z\into \Z[p^{-1}]/\Z$ defined by
	$(s\bmod{p^e})\mapsto (\frac{s}{p^e}\bmod{\Z})$ is a \jhull{}.
	It is a \jextension{}, because the Prüfer group $\Z[p^{-1}]/\Z$ itself
	is $J$-torsion, and it is also essential because every subgroup of 
	$\Z[p^{-1}]/\Z$ is of the form $\frac{1}{p^d}\Z$, so it intersects the
	image of $g$ in $\frac{1}{p^{\min(e,d)}}\Z$.

	Finally, $\Z[p^{-1}]/\Z$ is divisible as an abelian group, so in
	particular it is \jinj{}, since in this case it is equivalent
	to being $p$-divisible.
\end{example}

\subsection{Duality}
\label{section:duality}

Fix again a ring $R$ and a complete ideal filter $J$ of $R$.
Fix as well a left $R$-module $M$ and a $J$-injective and $J$-torsion
left $R$-module $T$ and let $E=\End_R(T)$.

In this section we prove an elementary duality result that will be key
to the proof of our main Kummer-theoretic results
(Theorem \ref{thm:sesCohomology}).

\begin{definition}
	If $V$ is a subset of $\Hom_R(M,T)$ we
	denote by $\ker(V)$ the submodule of $M$ given by
	\begin{align*}
		\ker(V) := \bigcap_{f\in V} \ker(f)
	\end{align*}
	and we call it the \emph{joint kernel} of $V$.
\end{definition}

If $M'$ is a submodule of $M$ we will identify
$\Hom_R(M/M',T)$
with
the submodule $\set{f\in\Hom_R(M,T)\mid \ker(f)\supseteq M'}$
of $\Hom_R(M,T)$.

\begin{proposition} \label{prop:duality1}
	If $V$ is a finitely generated $E$-submodule of $\Hom_R(M,T)$ we have
	$V=\Hom_R(M/\ker(V), T)$.
\end{proposition}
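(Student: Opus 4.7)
The plan is to prove the non-trivial inclusion $\Hom_R(M/\ker(V),T) \subseteq V$ by using a finite set of $E$-generators of $V$ to construct a single map into a power of $T$, then exploit the $J$-injectivity of $T$ together with the identification $\Hom_R(T^n,T)\cong E^n$.

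First I would dispatch the easy inclusion: if $f\in V$ then by definition of the joint kernel $\ker(f)\supseteq \ker(V)$, so $f\in\Hom_R(M/\ker(V),T)$ under the stated identification.

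For the reverse inclusion, write $V=Ef_1+\cdots+Ef_n$ and assemble the $f_i$ into a single homomorphism $F=(f_1,\dots,f_n)\colon M\to T^n$. Any $f=\sum e_if_i\in V$ vanishes on $\bigcap_i\ker(f_i)=\ker(F)$, and conversely the generators $f_i$ themselves lie in $V$, so $\ker(V)=\ker(F)$. Hence $F$ factors as an injection $\bar F\colon \bar M\into T^n$, where $\bar M:=M/\ker(V)$. Now $T$ is $J$-torsion, so $T^n$ is $J$-torsion, and therefore the cokernel of $\bar F$ is $J$-torsion; by Lemma \ref{lemma:propertiesOfDivision}(4), $\bar F$ is a \jextension{}.

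Given any $g\in\Hom_R(M/\ker(V),T)$, viewed as a map $\bar g\colon \bar M\to T$, the $J$-injectivity of $T$ furnishes an $R$-linear extension $h\colon T^n\to T$ with $h\circ\bar F=\bar g$. The key observation is that $\Hom_R(T^n,T)\cong E^n$ via $h\mapsto(h\circ\iota_1,\dots,h\circ\iota_n)$, where $\iota_i$ are the canonical inclusions. Hence $h=(e_1,\dots,e_n)$ with $e_i\in E$, and
\begin{align*}
	g \;=\; h\circ F \;=\; \sum_{i=1}^n e_i\circ f_i \;\in\; Ef_1+\cdots+Ef_n \;=\; V.
\end{align*}

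The step that deserves the most care is verifying that $\bar F$ really is a \jextension{} so that $J$-injectivity applies; everything else is bookkeeping. The finite generation hypothesis enters precisely so that the target $T^n$ is a finite power of $T$, which is both $J$-injective (as a finite direct sum of $J$-injectives) and has $\Hom_R(T^n,T)$ equal to $E^n$, ensuring the extension $h$ decomposes as an $E$-linear combination of the $f_i$'s.
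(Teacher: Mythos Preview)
Your proof is correct and follows essentially the same route as the paper's: assemble generators into a map $F\colon M\to T^n$, observe that the induced injection $\bar F\colon M/\ker(V)\hookrightarrow T^n$ is a \jextension{} because $T^n$ is $J$-torsion, extend any $g$ along $\bar F$ by $J$-injectivity of $T$, and decompose the resulting map in $\Hom_R(T^n,T)\cong E^n$. The only differences are cosmetic (you justify $\ker(V)=\ker(F)$ explicitly and invoke Lemma~\ref{lemma:propertiesOfDivision}(4) for the \jextension{} step, whereas the paper states these directly).
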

\begin{proof}
	Notice that the inclusion $V\subseteq \Hom_R(M/\ker(V),T)$ is obvious.
	For the other inclusion we want to show that every homomorphism $g:M\to T$
	with $\ker(g)\supseteq \ker(V)$ belongs to $V$. Let then $g$ be such a map
	and let $\overline g:M/\ker(V)\to T$ be its factorization through the
	quotient $M/\ker(V)$.
	Let $\set{f_1,\dots,f_n}$ be a set of generators for $V$ as an
	$E$-module and let
	\begin{align*}
		\varepsilon: M & \to T^n \\
		x & \mapsto (f_1(x), \dots, f_n(x))
	\end{align*}
	We have $\ker(\varepsilon)=\ker(V)$, so that $\varepsilon$ factors
	as an injective map $\overline \varepsilon:M/\ker(V)\to T^n$. Since
	$T$ is $J$-torsion, so is $T^n$, hence $\overline\varepsilon$ is
	a $J$-extension. Since $T$ is $J$-injective there is an $R$-linear map
	$\lambda:T^n\to T$ such that $\lambda\circ\overline\varepsilon=\overline g$,
	or equivalently $\lambda\circ \varepsilon=g$.
	\begin{equation*}
		\begin{tikzcd}
			& & T \\
			M \ar[r, two heads] \ar[drr, bend right, "\varepsilon", swap]
				\ar[rru, bend left, "g"] &
			M/\ker(V) \ar[ru, "\overline g"]
				\ar[dr, swap, "\overline\varepsilon", hook] \\
			& & T^n \ar[uu, dashed, "\lambda", swap]
		\end{tikzcd}
	\end{equation*}
	Since $\Hom_R(T^n,T)\cong \bigoplus_{i=1}^n\End_R(T)$, there are elements
	$e_1,\dots,e_n\in \End_R(T)$ such that $\lambda(t_1,\dots,t_n)=
	e_1(t_1)+\dots +e_n(t_n)$ for every $(t_1,\dots,t_n)\in T^n$.
	Then for $x\in M$ we get
	\begin{align*}
		\lambda(\varepsilon(x)) & = \lambda(f_1(x),\dots,f_n(x))\\
		& = e_1(f_1(x)) + \cdots + e_n(f_n(x))
	\end{align*}
	which means that $g=e_1\circ f_1+\cdots+e_n\circ f_n\in V$ because
	$V$ is an $E$-module.
\end{proof}

\begin{remark}
	Proposition \ref{prop:duality1} is a generalization of the following
	fact from linear algebra: let $V$ be a finite-dimensional vector space
	over a field $K$ and let $f_1,\dots,f_n:V\to K$ be linear functions.
	If $f:V\to K$ is a linear function such that $\ker(f)\supseteq
	\bigcap_{i=1}^n \ker(f_i)$, then $f$ is a linear combination of
	$f_1,\dots,f_n$.
\end{remark}

\begin{definition}
	Let $N$ and $Q$ be left $R$-modules. We say that $Q$ is a
	\emph{cogenerator} for $N$ if $\ker(\Hom_R(N,Q))=0$.
\end{definition}

\begin{theorem} \label{thm:duality}
	Let $R$ be a ring and let $J$ be a complete ideal filter on $R$.
	Let $T$ be a $J$-injective and $J$-torsion left
	$R$-module and let $M$ be any left $R$-module. Assume that $T$ is a
	cogenerator for every quotient of $M$ and that $\Hom_R(M,T)$ is
	Noetherian as an $\End_R(T)$-module. The maps
	\begin{align*}
		\begin{array}{ccc}
			\left\{R\text{-submodules of }M\right\}
			& \to & 
			\left\{\End_R(T)\text{-submodules of }\Hom_R(M,T)\right\} \\
			M' & \mapsto & \Hom_R(M/M', T)\\
			\ker(V) & \mapsfrom & V
		\end{array}
	\end{align*}
	define an inclusion-reversing bijection between the set of
	$R$-submodules of $M$ and that of $\End_R(T)$-submodules of
	$\Hom_R(M,T)$.
\end{theorem}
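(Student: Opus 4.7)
The plan is to verify that the two assignments are mutually inverse; inclusion-reversal is immediate from the definitions, so the content is in showing $\ker(\Hom_R(M/M',T)) = M'$ and $\Hom_R(M/\ker(V),T) = V$.

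First I would dispatch the ``starting from a submodule'' direction. Given an $R$-submodule $M' \subseteq M$, identify $\Hom_R(M/M',T)$ with the subset of $\Hom_R(M,T)$ consisting of maps vanishing on $M'$. The joint kernel of this subset obviously contains $M'$. For the reverse inclusion, the hypothesis that $T$ is a cogenerator for every quotient of $M$ applies to $M/M'$: it says that for every non-zero $\bar x \in M/M'$ there exists an $R$-linear map $M/M' \to T$ not killing $\bar x$. Pulling back to $M$, this exactly means that $\ker(\Hom_R(M/M',T)) \subseteq M'$. So the first composition is the identity.

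For the other direction, let $V$ be an $\End_R(T)$-submodule of $\Hom_R(M,T)$. Here I invoke the Noetherian hypothesis: since $\Hom_R(M,T)$ is a Noetherian $\End_R(T)$-module, $V$ is finitely generated as an $\End_R(T)$-module. This is precisely the hypothesis needed to apply Proposition \ref{prop:duality1}, which yields $V = \Hom_R(M/\ker(V),T)$. So the second composition is also the identity.

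Combining, the two maps are inverse inclusion-reversing bijections. The only non-trivial step is the one that has already been isolated as Proposition \ref{prop:duality1}; the cogenerator hypothesis handles the easy direction and the Noetherian hypothesis reduces the harder direction to the finitely generated case treated there. I do not anticipate any real obstacle — essentially the proof consists of matching the two hypotheses to the two inclusions that need to be checked.
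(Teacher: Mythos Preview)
Your proposal is correct and is essentially the paper's proof spelled out in more detail: the paper also notes inclusion-reversal, uses the Noetherian hypothesis to reduce to the finitely generated case so that Proposition \ref{prop:duality1} applies, and invokes the cogenerator hypothesis for the other composite. Nothing differs substantively.
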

\begin{proof}
	Notice first of all that the maps are well-defined and they are
	both inclusion-reversing.
	Since $\Hom_R(M,T)$ is Noetherian as an $\End_R(T)$-module, every
	submodule is finitely generated, so we may apply Proposition
	\ref{prop:duality1}. Since $T$ is a cogenerator for every quotient
	of $M$ we can conclude that the two given maps are inverse of
	each other.
\end{proof}

\begin{example}
	Let $R=\Z$, let $J=\infty$ and let $T=(\Q/\Z)^s$ for some positive integer
	$s$. Let $M$ be a finitely generated abelian group.
	Notice that $T$ is $J$-torsion and, since it is injective,
	it is in particular $J$-injective. Since $\Q/\Z$ is a cogenerator for every
	abelian group, then so is $T$.
	We have $\End_R(T)=\Mat_{s\times s}(\hat \Z)$ and since
	$M$ is finitely generated $\Hom_R(M,T)$ is Noetherian over
	$\Mat_{s\times s}(\hat \Z)$.
	We are then in the setting of Theorem \ref{thm:duality}. 
\end{example}

\section{The category of \jtextensions{}}
\label{section:jtext}

Fix for this section a ring $R$, a complete ideal filter $J$
of $R$ and a $J$-torsion and \jinj{} left $R$-module $T$.  

In this section we introduce \jtextensions{}, which are essentially
\jextensions{} whose $J$-torsion is contained in an $R$-module $T$ as above
(see Definition \ref{definition:jtext}).
These extensions of $R$-modules share many interesting properties with
field extensions, and in fact at the end of this section we will be able to
prove the existence of a ``maximal'' \jtextension{}, analogous to an
algebraic closure in field theory.

\subsection{\pointed{} $R$-modules}

In order to define \jtextensions{} we first introduce the more fundamental
concept of \pointed{} $R$-module.

\begin{definition}
	A \emph{\pointed{} $R$-module} is a pair $(M,s)$, where $M$ is a left
	$R$-module and $s:M[J]\into T$ is an injective homomorphism.

	If $(L,r)$ and $(M,s)$ are two \pointed{} $R$-modules, we call an
	$R$-module homomorphism $\varphi:L\to M$ a \emph{homomorphism} or
	\emph{map of \pointed{} $R$-modules} if $s\circ\restr{\varphi}{L[J]} = r$.
\end{definition}

In the following we will sometimes omit the map $s$ from the notation and
simply refer to \emph{the \pointed{} $R$-module $M$}.

\begin{remark} \label{remark:TPRMmapInjectiveOnTorsion}
	A map $\varphi:(L,r)\to (M,s)$ of \pointed{} $R$-modules is injective
	on $L[J]$. Indeed $s\circ\restr\varphi{L[J]}=r$ is injective, so
	$\restr\varphi{L[J]}$ must be injective as well.
\end{remark}

\begin{definition}
	If $(M,s)$ is a \pointed{} $R$-module we denote the \pointed{} $R$-module
	$(M[J],s)$ by $\jt(M,s)$, or simply by $\jt(M)$. We will denote the natural
	inclusion $\jt(M)\into M$ by $\inct{M}$.
\end{definition}

\begin{example}
	Let $R=\Z$ and let $J$ be the complete ideal filter $\infty$ on $\Z$.
	Let $T=(\Q/\Z)^2$, which is  $\infty$-injective and $\infty$-torsion.
	The abelian group $M=\Z\oplus \Z/6\Z\oplus \Z/2\Z$ together with the
	map $s:\Z/6\Z\oplus \Z/2\Z$ that sends $(1,0)$ to $\left(\frac16,0\right)$
	and $(0,1)$ to $\left(0,\frac12\right)$ is a $T$-pointed $R$-module.
\end{example}

As is the case with field extensions, pushouts do not always exist in
our newly-defined category. However the pushout of two maps 
of $T$-pointed $R$-modules exists if at least
one of the two is injective and ``as little a \jmap{} as possible''.

\begin{definition}
	We say that a map $f:L\to M$ of \pointed{} $R$-modules is \emph{pure}
	if $\divm{J}{f(L)}{M}=f(L)+M[J]$. 
\end{definition}

\begin{proposition}
	\label{prop:pushout}
	Let $(L,r)$, $(M,s)$ and $(N,t)$ be \pointed{} $R$-modules and let
	$f:L\to M$ and $g:L\to N$ be maps of \pointed{} $R$-modules.
	Assume that $f$ is injective and pure. Then the pushout
	\begin{tikzcd}
		M \ar[r,"i"] & P & \ar[l,"j",swap] N
	\end{tikzcd}
	of $f$ along $g$ exists in the category of \pointed{} $R$-modules.

	Moreover the pushout map $j:N\to P$ is injective, and if $g$ is
	injective the pushout map $i:M\to P$ is injective.
\end{proposition}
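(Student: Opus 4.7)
The plan is to construct $P$ as a suitable quotient of the usual $R$-module pushout of $f$ and $g$, and to equip this quotient with a $T$-pointed structure realizing the universal property in our category.

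First I would form the $R$-module pushout $P_0 = (M \oplus N)/K$ with $K = \{(f(x), -g(x)) : x \in L\}$, with canonical maps $i_0 : M \to P_0$ and $j_0 : N \to P_0$. That $j_0$ is injective is immediate from injectivity of $f$: if $(0, n) \in K$ then $n = -g(x)$ with $f(x) = 0$, hence $x = 0$. Dually $i_0$ is injective when $g$ is injective.

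The crucial step is to describe $P_0[J]$ explicitly: I would claim $P_0[J] = (M[J] \oplus N[J])/K'$ with $K' = \{(f(x), -g(x)) : x \in L[J]\}$. If $(m, n)$ satisfies $I(m, n) \subseteq K$ for some $I \in J$, then taking first coordinates gives $Im \subseteq f(L)$, so by purity of $f$, $m \in \divm{J}{f(L)}{M} = f(L) + M[J]$; writing $m = f(y) + m'$ with $m' \in M[J]$, the class $[(m, n)]$ equals $[(m', n + g(y))]$, and a short argument using injectivity of $f$ shows $n + g(y) \in N[J]$. The same injectivity shows that the relations reduce to $K'$. This lets me define $p_0 : P_0[J] \to T$ by $[(m, n)] \mapsto s(m) + t(n)$, which is well-defined because the pointed-map conditions on $f$ and $g$ give $s(f(x)) = r(x) = t(g(x))$ for $x \in L[J]$.

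Next I would set $P = P_0/\ker(p_0)$, viewing $\ker(p_0) \subseteq P_0[J] \subseteq P_0$. Completeness of $J$ gives $\divm{J}{P_0[J]}{P_0} = P_0[J]$, whence $\divm{J}{\ker(p_0)}{P_0} = P_0[J]$, so Lemma \ref{lemma:propertiesOfDivision}(3) yields $P[J] = P_0[J]/\ker(p_0)$, and the induced $p : P[J] \to T$ is injective by construction. The induced maps $i : M \to P$ and $j : N \to P$ are pointed by direct verification. For the universal property, any pointed maps $\alpha : M \to Q$, $\beta : N \to Q$ into a pointed module $(Q, q)$ with $\alpha \circ f = \beta \circ g$ yield an $R$-module map $\gamma_0 : P_0 \to Q$; the key check is that $\gamma_0$ vanishes on $\ker(p_0)$, which follows because for $h = [(m, n)] \in \ker(p_0)$ we have $\gamma_0(h) = \alpha(m) + \beta(n) \in Q[J]$ with $q(\gamma_0(h)) = s(m) + t(n) = 0$, and $q$ is injective on $Q[J]$. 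Finally, injectivity of $j$ in $P$ reduces to showing $j_0(n) \in \ker(p_0) \Rightarrow n = 0$: membership in $P_0[J]$ forces $n \in N[J]$ (again by injectivity of $f$), and then $t(n) = 0$ gives $n = 0$; the statement for $i$ under the assumption that $g$ is injective is symmetric.

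I expect the explicit computation of $P_0[J]$ to be the main obstacle, since purity of $f$ is precisely what is needed to reduce every $J$-torsion class in $P_0$ to a representative in $M[J] \oplus N[J]$, and hence to produce a map to $T$ compatible with both pointed structures; without purity, no such $p_0$ would exist in general.
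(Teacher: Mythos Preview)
Your proof is correct and follows essentially the same approach as the paper's: both form the $R$-module pushout $P_0$, use purity of $f$ to show that every $J$-torsion class in $P_0$ is represented by a pair in $M[J]\oplus N[J]$, and then pass to a quotient making the map $[(m,n)]\mapsto s(m)+t(n)$ injective. The only cosmetic difference is that you define $p_0$ on $P_0[J]$ first and quotient by $\ker(p_0)$, whereas the paper writes down this kernel explicitly (as the submodule generated by classes $[(m,-n)]$ with $s(m)=t(n)$) before defining the map; these are the same object.
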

\begin{proof}
	We have to show that there is a \pointed{} $R$-module $(P,u)$ with maps
	$i:M\to P$ and $j:N\to P$ such that the diagram
	\begin{center}
		\begin{tikzcd}
			L \ar[d, swap, "g"] \ar[r, "f"] & M \ar[d, "i"] \\
			N \ar[r, swap, "j"] & P
		\end{tikzcd}
	\end{center}
	commutes and such that for every \pointed{} $R$-module $(Q,v)$ with maps
	$k:M\to Q$ and $l:N\to Q$ with $k\circ f=l\circ g$ there is a unique map
	$\varphi:L\to Q$ such that the diagram
	\begin{center}
		\begin{tikzcd}
			L \ar[d, swap, "g"] \ar[r, "f"] & M \ar[d, "i"] 
				\ar[ddr, bend left, "k"] \\
			N \ar[r, swap, "j"] \ar[drr, bend right, swap, "l"] & P 
				\ar[dr, "\varphi"]\\
			& & Q
		\end{tikzcd}
	\end{center}
	commutes.

	Let $P'$ be the pushout of $f$ along $g$ as maps of $R$-modules, and let
	$i':M\to P'$ and $j':N\to P'$ be the pushout maps.  Write $P'$ as
	$(M\oplus N)/S$ where $S=\set{(f(\lambda),-g(\lambda))\mid \lambda\in L}$.
	Let $\pi:P'\to P$ be the quotient by the submodule
	\begin{align*}
		K:=\left\langle\set{[(m,-n)]\mid \text{for all }m\in M[J],\,n\in N[J]
			\text{ such that } s(m)=t(n)}\right\rangle
	\end{align*}
	and let $i=\pi\circ i'$ and $j=\pi\circ j'$. Notice that
	$i\circ f =j\circ g$.

	We claim that $P'[J]$ is generated by $i'(M[J])$ and $j'(N[J])$.
	The claim is obviously true if $J=0$, so we may assume that $J\neq 0$.
	To prove the claim, notice that by Lemma \ref{lemma:propertiesOfDivision}(3)
	we have $P'[J]=\divm{J}{S}{M\oplus N}/S$, so any element of $P'[J]$ is
	represented by a pair $(m,n)$ such that $I(m,n)\subseteq S$ for some
	$I\in J$. Then since $f$ is a pure map
	we have $m=f(\lambda)+t_m$ for some $\lambda\in L$ and some $t_m\in M[J]$.

	Let $I'\in J$ be such that $I't_m = 0$.
	Then $I\cap I'\in J$ and for any nonzero $h\in I\cap I'$ we have
	$(f(h\lambda),hn)=h(m-t_m,n)=h(m,n)\in S$, which means that
	$hn=-g(h\lambda + z)$ for some $z\in\ker(f)$. Since $f$ is injective
	we have that $n=-g(\lambda)+t_n$ for some $t_n\in N[J]$. It
	follows that the class of $(m,n)$ in $P'[J]$ is the same as that
	of $(t_m,t_n)$, which proves our claim.

	Since $K\subseteq P'[J]$, it follows easily from our claim that
	$P[J]=P'[J]/K$ and thus that the map
	\begin{align*}
		u:P[J]&\to T \\
		[(m,n)]&\mapsto s(m)+t(n)
	\end{align*}
	is well-defined and injective.
	This shows that $(P,u)$ is a \pointed{} $R$-module and that $i:M\to P$ and
	$j:N\to P$ are maps of \pointed{} $R$-modules.

	Let now $(Q,v)$, $k$ and $l$ be as above. By the universal property of the
	pushout there is a unique $R$-module homomorphism $\varphi':P'\to Q$ such
	that $\varphi'\circ i'=k$ and $\varphi'\circ j'=l$. Since $k$ is a map
	of \pointed{} $R$-modules, this implies that $v\circ\varphi'\circ i'=s$
	and $v\circ\varphi'\circ j'=t$, so that $\varphi'$ factors through $P$
	as a \pointed{} $R$-module homomorphism $\varphi:P\to Q$.

	For the last assertion we first notice that if $g$ is injective, then so is
	the $R$-module pushout map $i'$. Then we claim that $i'(M)\cap K=0$. Indeed
	if $[(m_0,0)] = [(m,-n)]$ in $P'$ for some $m_0\in m$, $m\in M[J]$ and
	$n\in N[J]$ such that $s(m)=t(n)$, then there is some $\lambda\in L$ such
	that $m-m_0=f(\lambda)$ and $n=g(\lambda)$. Since $g$ is injective,
	$\lambda$ is $J$-torsion, and we have $r(\lambda)=s(m)-s(m_0)=t(n)$. But
	since $s(m)=t(n)$ we must have $m_0=0$, and we conclude that
	$i'(M)\cap K=0$. It follows that $i=\pi\circ i'$ is injective. The fact
	that the injectivity of $f$ implies that of $j$ is completely analogous.
\end{proof}

\begin{remark} \label{remark:noPushout}
	Let $R=\Z$, $J=2^\infty$, $T=\Z\left[\frac12\right]/\Z$, $L=\Z$ and
	$M=N=\frac{1}{2}\Z$. The $R$-modules $L$, $M$ and $N$ are $T$-pointed
	via the zero map, since their $J$-torsion is trivial.
	Let $f:L\into M$ and $g:L\into N$ be the natural inclusion and notice
	that they are maps of $T$-pointed $R$-modules that are not pure.
	We claim that the pushout of $f$ along $g$ does not exist in the
	category of $T$-pointed $R$-modules.

	Suppose instead that
	$(P,u)$ is a pushout of
	$f$ along $g$ and consider the $T$-pointed $R$-module
	$\left(\frac12\Z\oplus \Z/2\Z, z\right)$, where $z:\Z/2\Z\to T$ is the
	only possible injective map. Consider the diagram
	\begin{center}
		\begin{tikzcd}
			L \ar[d, swap, "g"] \ar[r, "f"] & M \ar[d, "i"] 
				\ar[ddr, bend left, "k"] \\
			N \ar[r, swap, "j"] \ar[drr, bend right, swap, "l"] & P 
				\ar[dr, "\varphi"]\\
			& & \frac12\Z \oplus \frac{\Z}{2\Z}
		\end{tikzcd}
	\end{center}
	where the maps $k$ and $l$ are defined as
	\begin{align*}
		\begin{array}{ccccccccccc}
		k:&\frac12\Z & \to & \frac12\Z\oplus \frac{\Z}{2\Z} & & & &
		l:&\frac12\Z & \to & \frac12\Z\oplus \frac{\Z}{2\Z} \\
		 & & & & \qquad & \text{and} & \qquad\\
		&\frac12 & \mapsto & \left(\frac12,0\right) & & & &
		&\frac12 & \mapsto & \left(\frac12,1\right) \\
		\end{array}
	\end{align*}
	Notice that $k$ and $l$ are maps of $T$-pointed $R$-modules such that
	$k\circ f=l\circ g$. Then by assumption there exists a unique map
	of $T$-pointed $R$-modules $\varphi:P\to \frac12\Z\oplus \Z/2\Z$ that
	makes the diagram commute.
	In particular we have $\varphi(j(\frac12))\neq\varphi(i(\frac12))$,
	which implies that $j(\frac12)\neq i(\frac12)$. But since
	$2j(\frac12)=j(g(1))=i(f(1))=i(\frac12)$ we have that
	$t:=j(\frac12)-i(\frac12)$ is a $2$-torsion element of $P$, and we
	must have $u(t)=\frac12$.

	Consider now the map $k':M\to \frac12\Z\oplus \Z/2\Z$ mapping $\frac12$
	to $\left(\frac12,0\right)$, just as $l$ does. This is again a map
	of $T$-pointed $R$-modules such that $k'\circ f=l\circ g$, so there
	must be a map of $T$-pointed $R$-modules $\varphi':P\to\frac12\Z\oplus
	\Z/2\Z$ that makes this new diagram commute. Such a map $\varphi'$
	must map $t$ to $0$, because $\varphi'(j(\frac12))=
	\left(\frac12,0\right)=\varphi'(i(\frac12))$. But then the diagram of
	structural maps into $T$
	\begin{center}
		\begin{tikzcd}
			P[J] \ar[dr,"u"]
				\ar[dd,swap,"\restr{\varphi'}{P[J]}"]\\
			& T\\
			\frac{\Z}{2\Z} \ar[ur,"z",swap]
		\end{tikzcd}
	\end{center}
	would not commute, which is a contradiction. This proves our claim.
\end{remark}


The class of $T$-pointed $R$-modules whose torsion submodule is
isomorphic to $T$ will be particularly important for us.

\begin{definition}
	Let $(M,s)$ be a \pointed{} $R$-module. We say that $(M,s)$ is
	\emph{saturated} if $\inct M:M[J]\into T$ is surjective
	(and hence an isomorphism).
\end{definition}

\begin{remark}
	The map $\inct M$ is a pure and injective map.
\end{remark}

Every \pointed{} $R$-module can be embedded in a saturated module, and the smallest
saturated module containing a given one can be constructed as a pushout.

\begin{definition} \label{definition:saturationTpoint}
	If $(M,s)$ is a \pointed{} $R$-module we call \emph{saturation} of
	$(M,s)$, denoted by $\tp(M,s)$ or simply by $\tp(M)$, the \pointed{}
	$R$-module $(P,u)$ which is the pushout (in the category of \pointed{}
	$R$-modules) of the diagram
	\begin{center}
		\begin{tikzcd}
			M[J] \ar[d,"s"] \ar[r,hook, "\inct M"] & M \ar[d, "\incs M"] \\
			T \ar[r] & P
		\end{tikzcd}
	\end{center}
	We will also denote by $\tp(s)$ the map $u$ and by $\incs M$ the pushout
	map $M\to P$.
\end{definition}

\begin{remark}
	Notice that the pushout map $T\to P$ of Definition
	\ref{definition:saturationTpoint} is an isomorphism onto $P[J]$.
	Indeed by definition of $T$-pointed $R$-module the following
	diagram commutes:
	\begin{center}
		\begin{tikzcd}
			T=T[J]  \ar[dr, "\id_T"] \ar[dd] \\
			& T \\
			P[J] \ar[ur,swap,"\tp(s)"]
		\end{tikzcd}
	\end{center}
	where the vertical map on the left is the pushout map.
	It follows that $\tp(s)$, which is injective by definition,
	is also surjective, hence an isomorphism, and the pushout map
	is its inverse. In other words, the saturation of a $T$-pointed
	$R$-module is saturated.
\end{remark}

\subsection{\jtextensions{}}

We can finally introduce the main object of study of this section.

\begin{definition} \label{definition:jtext}
	Let $(M,s)$ be a \pointed{} $R$-module. A \emph{\jtextension{}} of $(M,s)$ 
	is a triple $(N,i,t)$ such that $(N,t)$ is a \pointed{} $R$-module and
	$i:M\into N$ is a map of \pointed{} $R$-modules and a \jextension{}.

	If $(N,i,t)$ and $(P,j,u)$ are two \jtextensions{} of $(M,s)$ we call a 
	homomorphism of \pointed{} $R$-modules $\varphi:N\to P$ a
	\emph{homomorphism} or \emph{map of \jtextensions{}} if $\varphi\circ i=j$.

	We denote by $\jtcat(M,s)$ the category of \jtextensions{} of $(M,s)$.
\end{definition}

In the following we will sometimes omit the maps $i$ and $t$ from the notation
and simply refer to \emph{the \jtextension{} $N$ of M}.

\begin{remark}
	Let $(N,i,t)$ and $(P,j,u)$ be \jtextensions{} of the \pointed{} $R$-module
	$(M,s)$ and let $\varphi:N\to P$ be a map of \jtextensions{}.
	Then $(P,\varphi,u)$ is a \jtextension{} of $(N,t)$. In fact we have
	\begin{align*}
		\divm{J}{\varphi(N)}{P} \supseteq \divm{J}{j(M)}{P}=P\,.
	\end{align*}
\end{remark}

\begin{example}
	Let $R=\Z$, let $J$ be the complete ideal filter $2^\infty$ of $\Z$
	and let $T$ be the $2^\infty$-torsion and $2^\infty$-injective
	$\Z$-module $\left(\Z\left[\frac12\right]/\Z\right)^2$.
	If $M=\Z\oplus \Z/2\Z \oplus \Z/2\Z$ then the map
	$s:\Z/2\Z\oplus \Z/2\Z\to T$ that sends $(1,0)$ to
	$\left(\frac12,0\right)$ and $(0,1)$ to $\left(0,\frac12\right)$ turns
	$(M,s)$ into a $T$-pointed $R$-module.

	Let $N=\frac12\Z\oplus \Z/4\Z\oplus \Z/2\Z$. The maps
	\begin{align*}
	\begin{array}{ccc}
		\begin{array}{ccc}
			t_1:\Z/4\Z \oplus \Z/2\Z & \to & T \\
			(1,0) & \mapsto & \left(\frac14,0\right)\\ 
			(0,1) & \mapsto & \left(0,\frac12\right) 
		\end{array}
		& \qquad\text{and}\qquad &
		\begin{array}{ccc}
			t_2:\Z/4\Z \oplus \Z/2\Z & \to & T \\
			(1,0) & \mapsto & \left(0,\frac14\right)\\
			(0,1) & \mapsto & \left(\frac12,0\right)
		\end{array}
	\end{array}
	\end{align*}
	define two different $T$-pointed $R$-module structures $(N,t_1)$ and
	$(N,t_2)$ on $N$. The component-wise inclusion $f:M\into N$ is a
	$2^\infty$ extension. Since it is compatible with all the maps to $T$,
	both $(N,f,t_1)$ and $(N,f,t_2)$ are $(2^\infty,T)$-extensions of
	$M$. They are not isomorphic as $(2^\infty,T)$-extensions, because
	they are not isomorphic as $T$-pointed $R$-modules.
\end{example}

We can immediately see some similarities between \jtextensions{} and field
extensions: every map is injective, and every surjective map is an isomorphism.

\begin{lemma} \label{lemma:jthomIsInjective}
	Every map of \jtextensions{} is injective.
\end{lemma}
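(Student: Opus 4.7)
The plan is to show that if $\varphi\colon N\to P$ is a map of $(J,T)$-extensions of $(M,s)$, with $i\colon M\hookrightarrow N$ and $j\colon M\hookrightarrow P$ the structural inclusions, then $\ker(\varphi)=0$. The strategy is to use the fact that $i$ is a $J$-extension to reduce the problem to the $J$-torsion submodule $N[J]$, and then invoke the observation already recorded in Remark~\ref{remark:TPRMmapInjectiveOnTorsion} that maps of $T$-pointed $R$-modules are automatically injective on $J$-torsion.

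Concretely, I would take $n\in\ker(\varphi)$ and, using $\divm{J}{i(M)}{N}=N$, choose $I\in J$ with $In\subseteq i(M)$. For each $r\in I$ write $rn=i(m_r)$ with $m_r\in M$ (unique because $i$ is injective). Applying $\varphi$ and using $\varphi\circ i=j$ gives $j(m_r)=\varphi(rn)=r\varphi(n)=0$, and since $j$ is injective $m_r=0$, hence $rn=0$. This holds for every $r\in I$, so $In=0$ and thus $n\in N[J]$.

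To finish, I would note that $\varphi$ is in particular a map of $T$-pointed $R$-modules $(N,t)\to(P,u)$, so the structural injection $t\colon N[J]\hookrightarrow T$ factors as $u\circ \varphi|_{N[J]}$; since $t$ is injective, so is $\varphi|_{N[J]}$ (this is exactly Remark~\ref{remark:TPRMmapInjectiveOnTorsion}). Therefore $n\in N[J]\cap\ker(\varphi)=0$, which gives the result.

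There is no real obstacle here; the statement is essentially a bookkeeping consequence of two facts built into the definitions, namely that $J$-extensions become surjective modulo $J$-torsion, and that $T$-pointed maps are forced to be injective on $J$-torsion by the injectivity of the structural maps into $T$. The only thing to be mildly careful about is the order in which these two ingredients are combined: one must first exploit the $J$-extension property of the \emph{source} $i\colon M\hookrightarrow N$ (not of $\varphi$ itself) to land inside $N[J]$, and only then appeal to the $T$-pointed structure.
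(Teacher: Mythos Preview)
Your proof is correct and follows essentially the same route as the paper: take $n\in\ker(\varphi)$, use that $i\colon M\hookrightarrow N$ is a $J$-extension to find $I\in J$ with $In\subseteq i(M)$, conclude $In=0$ via the injectivity of $j$, and then finish with Remark~\ref{remark:TPRMmapInjectiveOnTorsion}. The only cosmetic difference is that the paper argues with the whole ideal at once (from $\varphi(In)=0$ and $In\subseteq i(M)$, injectivity of $j$ forces $In=0$) rather than element by element as you do.
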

\begin{proof}
	Let $(N,i,t)$ and $(P,j,u)$ be \jtextensions{} of the \pointed{} $R$-module
	$(M,s)$ and let $\varphi:N\to P$ be a map of \jtextensions{}.
	Let $n\in \ker\varphi$. Since $i:M\into N$ is a \jextension{} there is
	$I\in J$ such that $In\subseteq i(M)$.
	But since $j:M\into P$ is injective and $\varphi(In)=0$, we must 
	have $In=0$, hence $n$ is $J$-torsion.
	But since $\varphi$ is a map of \pointed{} $R$-modules it is injective on
	$M[J]$ (see Remark \ref{remark:TPRMmapInjectiveOnTorsion}) so $n=0$.
\end{proof}

\begin{corollary} \label{corollary:surjIsIsom}
	Every surjective map of \jtextensions{} is an isomorphism.
\end{corollary}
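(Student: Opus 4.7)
The plan is to combine Lemma \ref{lemma:jthomIsInjective} with the assumed surjectivity and then verify that the set-theoretic inverse is a morphism in the category of $(J,T)$-extensions. Let $\varphi:(N,i,t)\to (P,j,u)$ be a surjective map of $(J,T)$-extensions of $(M,s)$. By Lemma \ref{lemma:jthomIsInjective} the map $\varphi$ is injective, and since it is also surjective by hypothesis, it is a bijective $R$-module homomorphism. Hence its set-theoretic inverse $\psi:P\to N$ is an $R$-module homomorphism as well.

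It remains to check that $\psi$ is a map of $(J,T)$-extensions, i.e.\ that it is a morphism of $T$-pointed $R$-modules from $(P,u)$ to $(N,t)$, and that it satisfies $\psi\circ j=i$. For the first, take any $p\in P[J]$; since $\varphi$ is an isomorphism of $R$-modules we have $\psi(p)\in N[J]$, and applying the identity $u\circ\restr{\varphi}{N[J]}=t$ at $\psi(p)$ yields $u(p)=t(\psi(p))$, which is exactly the required compatibility $t\circ\restr{\psi}{P[J]}=u$. For the second, apply $\psi$ to $\varphi\circ i=j$ to get $i=\psi\circ j$. Thus $\psi$ is a morphism of $(J,T)$-extensions inverse to $\varphi$, so $\varphi$ is an isomorphism.
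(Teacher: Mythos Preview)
Your proof is correct and follows essentially the same approach as the paper: invoke Lemma~\ref{lemma:jthomIsInjective} to obtain injectivity, conclude that $\varphi$ is a bijection of $R$-modules, and then verify that the inverse satisfies the two compatibilities required of a map of \jtextensions{}. Your exposition is slightly more detailed than the paper's, but the argument is the same.
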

\begin{proof}
	Let $(N,i,t)$ and $(P,j,u)$ be \jtextensions{} of the \pointed{} $R$-module
	$(M,s)$ and let $\varphi:N\to P$ be a map of \jtextensions{}.
	In view of Lemma \ref{lemma:jthomIsInjective} it is enough to show that if
	$\varphi$ is an isomorphism of $R$-modules, then its inverse
	$\varphi^{-1}:P\isomto N$ is also a map of \jtextensions{}. But the fact
	that $\varphi^{-1}\circ j=i$ follows directly from $\varphi\circ i=j$ while
	$t=u\circ\restr \varphi{P[J]}^{-1}=u$ follows from
	$u\circ \restr\varphi{N[J]}=t$.
\end{proof}

\begin{proposition}
	Let $(M,s)$ be a $T$-pointed $R$-module, let $(N,i,t)$ be a
	\jtextension{} of $(M,s)$ and let $(P,j,u)$ be a \jtextension{}
	of $(N,t)$. Then $(P,j\circ i, u)$ is a \jtextension{} of $(M,s)$.
\end{proposition}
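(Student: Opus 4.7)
The plan is to unpack the definition of a $(J,T)$-extension and verify each of the three required properties for $j \circ i$: that it is injective, that it is a $J$-map, and that it is a map of $T$-pointed $R$-modules. All three checks should be essentially immediate from results already established in the paper.

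First I would observe that $j \circ i$ is injective as the composition of two injective $R$-module homomorphisms. Next, since $i : M \to N$ and $j : N \to P$ are both $J$-maps (being $J$-extensions), Lemma \ref{lemma:transAndGap} yields that $j \circ i$ is a $J$-map. Combined with injectivity, this makes $j \circ i$ a $J$-extension of $M$.

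The only non-trivial bookkeeping concerns the $T$-pointed structure. I would note that any $R$-module homomorphism $\varphi : A \to B$ satisfies $\varphi(A[J]) \subseteq B[J]$, so in particular $i(M[J]) \subseteq N[J]$ and thus $(j \circ i)(M[J]) \subseteq P[J]$. Since $i$ is a map of $T$-pointed $R$-modules, $t \circ i|_{M[J]} = s$, and since $j$ is a map of $T$-pointed $R$-modules, $u \circ j|_{N[J]} = t$. Chaining these gives
\begin{align*}
u \circ (j \circ i)|_{M[J]} = u \circ j|_{N[J]} \circ i|_{M[J]} = t \circ i|_{M[J]} = s,
\end{align*}
which is exactly what it means for $j \circ i$ to be a map of $T$-pointed $R$-modules. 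This establishes that $(P, j \circ i, u)$ is a $(J,T)$-extension of $(M,s)$.

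There is no real obstacle here; the statement is a transitivity property and the proof is a direct application of Lemma \ref{lemma:transAndGap} together with the compatibility of the structural maps to $T$. The result is, in effect, a sanity check confirming that $(J,T)$-extensions form a well-behaved composition class, analogous to transitivity of field extensions.
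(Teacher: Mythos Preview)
Your proof is correct and follows essentially the same approach as the paper: injectivity is immediate, the $J$-map property comes from Lemma~\ref{lemma:transAndGap}, and the compatibility with the structural maps to $T$ is a direct chain of equalities. The paper's proof is simply terser, asserting the $T$-pointed compatibility as ``clear'' rather than writing out the chain $u \circ (j\circ i)|_{M[J]} = t \circ i|_{M[J]} = s$ as you did.
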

\begin{proof}
	The map $j\circ i$ is clearly a $J$-injective map of $T$-pointed
	$R$-modules, and it is a $J$-map by Lemma \ref{lemma:transAndGap}.
\end{proof}

\subsection{Pullback and pushforward}

One can recover much information about the \jtextensions{} of a certain
\pointed{} $R$-module by studying the extensions of its torsion submodule
and of its saturation -- see for example our construction of the maximal
\jtextension{} in 
Section \ref{sec:maxJText}. In order to study the relation
between these categories, we introduce the more general
pullback and pushforward functors which, interestingly, form an adjoint pair.

\begin{definition}
	If $\varphi:L\to M$ is a map of \pointed{} $R$-modules and
	$(N,i,t)$ is a \jtextension{} of $M$, we let
	\begin{align*}
		\varphi^*N:=\divm{J}{i(\varphi(L))}{N}\,,\qquad
		\varphi^*i:=\restr{i}{\varphi(L)}\,,\qquad
		\varphi^*t:=\restr{t}{(\varphi^*N)[J]}
	\end{align*}
	and we call them the \emph{pullback along $\varphi$} of $N$, $i$ and
	$t$ respectively.
\end{definition}

\begin{lemma} \label{lemma:pullback1}
	Let $\varphi:L\to M$ be a map of \pointed{} $R$-modules and let
	$(N,i,t)$ be a \jtextension{} of $M$.
	Then $(\varphi^*N, \varphi^*i, \varphi^*t)$
	is a \jtextension{} of $\varphi(L)$.
\end{lemma}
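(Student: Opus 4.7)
The plan is to unpack the definitions and verify each required property in turn, relying mostly on part (1) of Lemma \ref{lemma:propertiesOfDivision} for the division-module manipulation. First I would make explicit that $\varphi(L)$ carries a natural $T$-pointed structure, namely the restriction of the structural map of $M$ to $\varphi(L)[J]$; this is clearly an injective map to $T$, and with it the inclusion $\varphi(L)\hookrightarrow M$ becomes a map of $T$-pointed $R$-modules by construction.

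Next I would check that $(\varphi^*N,\varphi^*t)$ is itself a $T$-pointed $R$-module. Since $\varphi^*N$ is an $R$-submodule of $N$ (as observed right after the definition of $J$-division module), we have $(\varphi^*N)[J]\subseteq N[J]$, so $\varphi^*t=\restr{t}{(\varphi^*N)[J]}$ is automatically injective into $T$. Then I would verify that $\varphi^*i=\restr{i}{\varphi(L)}$ lands inside $\varphi^*N$: this is immediate, since $i(\varphi(L))\subseteq \divm{J}{i(\varphi(L))}{N}=\varphi^*N$ (the ideal filter contains $R$). Injectivity of $\varphi^*i$ is clear because $i$ is injective, and compatibility with pointed structures is inherited from $i$: the identity $t\circ\restr{i}{M[J]}=s_M$ restricts to $\varphi^*t\circ \restr{(\varphi^*i)}{\varphi(L)[J]}=\restr{s_M}{\varphi(L)[J]}$, which is exactly the structural map of $\varphi(L)$ as a $T$-pointed submodule of $M$.

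The only remaining point is that $\varphi^*i$ is a $J$-extension, i.e.\ that $\divm{J}{i(\varphi(L))}{\varphi^*N}=\varphi^*N$. Here I would invoke Lemma \ref{lemma:propertiesOfDivision}(1) with $M'=i(\varphi(L))$, $N'=\varphi^*N$, and $P=N$, giving
\begin{align*}
  \divm{J}{i(\varphi(L))}{\varphi^*N}=\divm{J}{i(\varphi(L))}{N}\cap \varphi^*N=\varphi^*N\cap\varphi^*N=\varphi^*N,
\end{align*}
as desired. No step looks genuinely hard; the main thing to keep straight is that $\varphi(L)$ is being viewed as a $T$-pointed $R$-module via the induced structure from $M$, so that the statement "$(J,T)$-extension of $\varphi(L)$" even makes sense.
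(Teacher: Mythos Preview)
Your proof is correct and follows essentially the same approach as the paper's. The only cosmetic difference is that for the final step you invoke Lemma~\ref{lemma:propertiesOfDivision}(1) directly, while the paper cites part~(2); since part~(2) is an immediate consequence of part~(1), the two arguments are the same in substance.
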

\begin{proof}
	Clearly $(\varphi^*N,\varphi^*t)$ is a \pointed{} $R$-module and
	\begin{align*}
		\varphi^*t\circ\restr{\varphi^*i}{\varphi(L)[J]} =
		t\circ \restr{i}{\varphi(L)[J]} =
		\restr{s}{\varphi(L)}
	\end{align*}
	so $\varphi^*i:(\varphi(L),\restr{s}{\varphi(L)})\to
	(\varphi^*N,\varphi^*t)$ is an injective map of \pointed{} $R$-modules.

	Moreover $\divm{J}{\varphi^*i(\varphi(L))}{\varphi^*N}=\varphi^*N$ by
	definition and by Lemma \ref{lemma:propertiesOfDivision}(2), so that
	$(\varphi^*N,\varphi^*i,\varphi^*t)$ is a \jextension{}.
\end{proof}

\begin{definition}
	If $\varphi:L\to M$ is a map of \pointed{} $R$-modules, $N$ and $P$ are
	\jtextensions{} of $M$ and $f:N\to P$ is a map of \jtextensions{}, the map
	\begin{align*}
		\restr{f}{\varphi^*N}:\varphi^*N\to\varphi^*P
	\end{align*}
	is a map of \jtextensions{} of $\varphi(L)$, which
	we denote by $\varphi^*f$.
\end{definition}

\begin{proposition} \label{prop:pullbackFunctor}
	Let $\varphi:L\to M$ be a map of \pointed{} $R$-modules. The diagram
	\begin{center}
		\begin{tikzcd}
			(N,i,t) \arrow[r,mapsto] \arrow[d,"f"] &
				(\varphi^*N,\varphi^*i,\varphi^*t) \ar[d,"\varphi^*f"]\\
			(P,j,u) \arrow[r,mapsto] & (\varphi^*P, \varphi^*j,\varphi^*u)
		\end{tikzcd}
	\end{center}
	defines a functor from $\jtcat{(M,s)}$ to
	$\jtcat{(\varphi(L),\restr{s}{\varphi(L)})}$.
\end{proposition}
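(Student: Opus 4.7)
The plan is to verify the three things needed to have a functor: that the action on morphisms is well-defined, that it preserves identities, and that it preserves composition. Lemma \ref{lemma:pullback1} already takes care of the action on objects, so the core task is to show that if $f:N\to P$ is a morphism in $\jtcat(M,s)$ then $\varphi^{*}f := \restr{f}{\varphi^{*}N}$ is a bona fide morphism in $\jtcat(\varphi(L),\restr{s}{\varphi(L)})$.

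First I would check that $\varphi^{*}f$ lands in $\varphi^{*}P$. Let $x\in\varphi^{*}N=\divm{J}{i(\varphi(L))}{N}$, so $Ix\subseteq i(\varphi(L))$ for some $I\in J$. Since $f\circ i=j$, we get $I\,f(x)=f(Ix)\subseteq j(\varphi(L))$, hence $f(x)\in\divm{J}{j(\varphi(L))}{P}=\varphi^{*}P$. This shows that $\restr{f}{\varphi^{*}N}$ factors as a map $\varphi^{*}N\to\varphi^{*}P$.

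Next I would verify that $\varphi^{*}f$ is a morphism of \jtextensions{} of $\varphi(L)$. Compatibility with the inclusions is immediate: $\varphi^{*}f\circ\varphi^{*}i = \restr{f}{\varphi^{*}N}\circ\restr{i}{\varphi(L)} = \restr{(f\circ i)}{\varphi(L)} = \restr{j}{\varphi(L)} = \varphi^{*}j$. Compatibility with the $T$-pointed structure is equally direct: since $f$ is a map of \pointed{} $R$-modules we have $u\circ\restr{f}{N[J]}=t$, and restricting this identity to $(\varphi^{*}N)[J]\subseteq N[J]$ gives $\varphi^{*}u\circ\restr{(\varphi^{*}f)}{(\varphi^{*}N)[J]}=\varphi^{*}t$, as required.

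Finally, functoriality in the categorical sense is essentially tautological from the definition: for the identity on $N$ the restriction to $\varphi^{*}N$ is the identity on $\varphi^{*}N$, and for a composable pair $N\xrightarrow{f}P\xrightarrow{g}Q$ we have $\varphi^{*}(g\circ f)=\restr{(g\circ f)}{\varphi^{*}N}=\restr{g}{\varphi^{*}P}\circ\restr{f}{\varphi^{*}N}=\varphi^{*}g\circ\varphi^{*}f$, where the middle step uses the already-proved fact that $f(\varphi^{*}N)\subseteq\varphi^{*}P$. There is no real obstacle here; the only point that requires a small argument (rather than pure bookkeeping) is the containment $f(\varphi^{*}N)\subseteq\varphi^{*}P$, which is the step I would present most carefully.
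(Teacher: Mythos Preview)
Your proof is correct and follows essentially the same approach as the paper: the paper likewise cites Lemma \ref{lemma:pullback1} for the action on objects and then checks functoriality on morphisms via the identical chain $\varphi^*g\circ\varphi^*f=\restr{g}{\varphi^*P}\circ\restr{f}{\varphi^*N}=\restr{(g\circ f)}{\varphi^*N}=\varphi^*(g\circ f)$. Your version is in fact more thorough, since you explicitly verify that $f(\varphi^*N)\subseteq\varphi^*P$ and that $\varphi^*f$ respects the \pointed{} structure, points the paper asserts in the definition preceding the proposition without spelling out.
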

\begin{proof}
	In view of Lemma \ref{lemma:pullback1} we only need to check that
	$\varphi^*$ behaves well with the respect to
	the composition of maps of \jtextensions{}. If
	\begin{align*}
		N\overset{f}{\longrightarrow} P \overset{g}{\longrightarrow} Q
	\end{align*}
	are maps of \jtextensions{} of $(M,s)$, we have
	\begin{align*}
		\varphi^*g\circ\varphi^*f =
		\restr{g}{\varphi^*P}\circ \restr{f}{\varphi^*N}=
		\restr{(g\circ f)}{\varphi^*N}=\varphi^*(g\circ f)\,.
	\end{align*}
\end{proof}

\begin{definition}
	We call the functor of Proposition \ref{prop:pullbackFunctor} the
	\emph{pullback along $\varphi$}, and we denote it by $\varphi^*$.
\end{definition}

\begin{definition}
	If $\varphi:L\to M$ is an injective and pure map of \pointed{} $R$-modules
	and $(N,i,t)$ is a \jtextension{} of $L$ we denote by $\varphi_*i:M\to
	\varphi_*N$ the pushout of $i$ along $\varphi$.
\end{definition}

\begin{lemma} \label{lemma:pushforward1}
	Let $\varphi:L\to M$ be an injective and pure map of \pointed{}
	$R$-modules and let $(N,i,t)$ be a \jtextension{} of $L$.
	Then $(\varphi_*N, \varphi_*i, \varphi_*t)$ is a \jtextension{} of $(M,s)$.
\end{lemma}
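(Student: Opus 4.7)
The plan is to combine the existence theorem for pushouts in the category of $T$-pointed $R$-modules (Proposition \ref{prop:pushout}) with the hypothesis that $i$ is a $J$-extension. First I would observe that, since $\varphi$ is injective and pure by assumption, Proposition \ref{prop:pushout} applies and produces the pushout as a $T$-pointed $R$-module $(\varphi_*N,\varphi_*t)$, together with a pushout map $\varphi_*i:M\to\varphi_*N$ which is a map of $T$-pointed $R$-modules. Moreover, the last assertion of Proposition \ref{prop:pushout} guarantees that $\varphi_*i$ is injective, because $i$ itself is injective (being a $J$-extension).

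It remains to show that $\varphi_*i$ is a $J$-map, i.e.\ that $\divm{J}{\varphi_*i(M)}{\varphi_*N}=\varphi_*N$. For this I would invoke the explicit construction of the pushout given inside the proof of Proposition \ref{prop:pushout}: every element of $\varphi_*N$ can be represented by a class $[(m,n)]$ with $m\in M$ and $n\in N$, and in particular one has the relation $[(\varphi(\lambda),0)]=[(0,i(\lambda))]$ for every $\lambda\in L$ (this relation is already imposed in the intermediate module $P'=(M\oplus N)/S$, so it survives the further quotient by $K$). Since $i:L\to N$ is a $J$-extension there exists $I\in J$ with $In\subseteq i(L)$; for any $r\in I$ I may pick $\ell\in L$ with $i(\ell)=rn$, and then in $\varphi_*N$
\[ r\cdot[(m,n)] = [(rm,rn)] = [(rm,i(\ell))] = [(rm+\varphi(\ell),0)] \in \varphi_*i(M). \]
Thus $I\cdot[(m,n)]\subseteq\varphi_*i(M)$, which shows $[(m,n)]\in\divm{J}{\varphi_*i(M)}{\varphi_*N}$ and hence that $\varphi_*i$ is a $J$-map.

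The main obstacle is conceptual rather than technical: the ``hard'' content has already been established in Proposition \ref{prop:pushout}, and what is left is simply a direct computation on representatives. One must be careful to argue on the intermediate pushout $P'$, where elements admit a convenient representation as pairs $(m,n)$, and then transfer the conclusion to the actual pushout $\varphi_*N=P'/K$; but since the quotient map $P'\to\varphi_*N$ sends $\varphi_*i(M)$ into itself and is surjective, this transfer is immediate.
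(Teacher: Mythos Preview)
Your proof is correct and follows essentially the same approach as the paper's. The paper's version is more concise: rather than computing on representatives, it directly invokes the standard pushout isomorphism $\varphi_*N/(\varphi_*i)(M)\cong N/i(L)$, which is $J$-torsion because $i$ is a $J$-extension---your element-wise computation is precisely an unpacking of this isomorphism.
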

\begin{proof}
	This follows from the fact that $\varphi_*i$ is injective and
	$\varphi_*N/(\varphi_*i)(M)\cong N/i(L)$
	is $J$-torsion, because $i:L\to N$ is a \jextension{}.
\end{proof}

\begin{lemma} \label{lemma:pushforward2}
	Let $\varphi:L\to M$ be an injective and pure map of \pointed{} $R$-modules,
	let $(N,i,t)$ and $(P,j,u)$ be \jtextensions{} of $L$ and let
	$f:N\to P$ be a map of \jtextensions{}. Then there is a unique map
	of \jtextensions{} of $M$
	\begin{align*}
		\varphi_*f:\varphi_*N\to\varphi_*P
	\end{align*}
	such that the diagram
	\begin{center}
		\begin{tikzcd}
			N\ar[r]\ar[d,"f"] & \varphi_*N \ar[d,"\varphi_*f"]\\
			P\ar[r]           & \varphi_*P
		\end{tikzcd}
	\end{center}
	commutes, where the horizontal maps are the pushout maps.
\end{lemma}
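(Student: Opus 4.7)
The plan is to construct $\varphi_*f$ directly from the universal property of the pushout, applied in the category of $T$-pointed $R$-modules (which is available thanks to Proposition \ref{prop:pushout}, since $\varphi$ is injective and pure).

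First I would fix notation by naming the second pushout maps $i_N:N\to\varphi_*N$ and $i_P:P\to\varphi_*P$ that arise from the constructions
\begin{equation*}
\begin{tikzcd}
L \ar[r,"i"] \ar[d,swap,"\varphi"] & N \ar[d,"i_N"] \\
M \ar[r,swap,"\varphi_*i"] & \varphi_*N
\end{tikzcd}
\qquad\qquad
\begin{tikzcd}
L \ar[r,"j"] \ar[d,swap,"\varphi"] & P \ar[d,"i_P"] \\
M \ar[r,swap,"\varphi_*j"] & \varphi_*P
\end{tikzcd}
\end{equation*}
By Lemma \ref{lemma:pushforward1} both $(\varphi_*N,\varphi_*i,\varphi_*t)$ and $(\varphi_*P,\varphi_*j,\varphi_*u)$ are $(J,T)$-extensions of $M$. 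I would then consider the two maps $\varphi_*j:M\to\varphi_*P$ and $i_P\circ f:N\to\varphi_*P$ and observe that they agree after precomposition with $\varphi$ and $i$ respectively, since
\begin{equation*}
\varphi_*j\circ \varphi \;=\; i_P\circ j \;=\; i_P\circ f\circ i\,,
\end{equation*}
where the first equality is the commutativity of the pushout square defining $\varphi_*P$ and the second uses that $f$ is a map of $(J,T)$-extensions of $L$.

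Next I would apply the universal property of the pushout $\varphi_*N$ in the category of $T$-pointed $R$-modules established in Proposition \ref{prop:pushout}. This yields a unique map of $T$-pointed $R$-modules $\varphi_*f:\varphi_*N\to\varphi_*P$ satisfying
\begin{equation*}
\varphi_*f\circ\varphi_*i = \varphi_*j\qquad\text{and}\qquad \varphi_*f\circ i_N = i_P\circ f\,.
\end{equation*}
The first identity says exactly that $\varphi_*f$ is a map of $(J,T)$-extensions of $M$, while the second is the commutativity of the square in the lemma's statement.

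Finally, for uniqueness I would argue that any other map of $(J,T)$-extensions $g:\varphi_*N\to\varphi_*P$ of $M$ making the square commute would in particular satisfy $g\circ\varphi_*i=\varphi_*j$ (being a map of $(J,T)$-extensions of $M$) and $g\circ i_N=i_P\circ f$ (commutativity of the square), so by the uniqueness part of the pushout property $g=\varphi_*f$. The only subtle point, and the reason one must work inside the $T$-pointed category rather than plain $R$-modules, is ensuring that the universal property applied is that of the pushout in $\jtcat$; this is granted by Proposition \ref{prop:pushout} precisely because $\varphi$ was assumed injective and pure, which is the main technical hypothesis making the whole argument go through without trouble.
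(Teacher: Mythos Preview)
Your proof is correct and follows essentially the same approach as the paper: both apply the universal property of the pushout $\varphi_*N$ in the category of $T$-pointed $R$-modules (Proposition~\ref{prop:pushout}) to the pair of maps $\varphi_*j:M\to\varphi_*P$ and $i_P\circ f:N\to\varphi_*P$, obtaining $\varphi_*f$ together with the two identities $\varphi_*f\circ\varphi_*i=\varphi_*j$ and $\varphi_*f\circ i_N=i_P\circ f$. One minor terminological slip: in your final paragraph you write ``the pushout in $\jtcat$'', but the relevant universal property is that of the pushout in the category of $T$-pointed $R$-modules, not in $\jtcat(M,s)$---this is harmless, since that is precisely what Proposition~\ref{prop:pushout} supplies and what your argument actually invokes.
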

\begin{proof}
	It is enough to apply the universal property of the pushout of
	$\varphi_*N$ to the diagram
	\begin{center}
		\begin{tikzcd}
		L \ar[d,swap,"i"]\ar[r,"\varphi"] &
			M\ar[d,"\varphi_*i"]\ar[ddr,bend left,"\varphi_*j"]\\
		N \ar[dr, bend right,swap,"f"]\ar[r] &
			\varphi_*N \ar[dr,dashed,"\varphi_*f"]\\
		& P \ar[r] & \varphi_*P 
		\end{tikzcd}
	\end{center}
	Indeed the map $\varphi_*f:\varphi_*N\to\varphi_*P$, whose existence is
	ensured by the universal property, is such that
	$\varphi_*P/\varphi_*f(\varphi_*N)\cong P/f(N)$ is $J$-torsion.
\end{proof}

\begin{proposition} \label{prop:pushforwardFunctor}
	Let $\varphi:L\to M$ be an injective and pure map of \pointed{} $R$-modules.
	The diagram
	\begin{center}
		\begin{tikzcd}
			(N,i,t) \arrow[r,mapsto] \arrow[d,"f"] &
				(\varphi_*N,\varphi_*i,\varphi_*t) \ar[d,"\varphi_*f"]\\
			(P,j,u) \arrow[r,mapsto] & (\varphi_*P, \varphi_*j,\varphi_*u)
		\end{tikzcd}
	\end{center}
	where $\varphi_*f$ is as in Lemma \ref{lemma:pushforward2},
	defines a functor from $\jtcat{(L,r)}$ to $\jtcat{(M,s)}$.
\end{proposition}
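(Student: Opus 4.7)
The plan is to assemble the functoriality of $\varphi_*$ from the two preceding lemmas. Lemma \ref{lemma:pushforward1} already shows that the assignment $(N,i,t)\mapsto (\varphi_*N,\varphi_*i,\varphi_*t)$ on objects lands in $\jtcat{(M,s)}$, and Lemma \ref{lemma:pushforward2} produces the morphism $\varphi_*f$ and, crucially, asserts its \emph{uniqueness} among maps of \jtextensions{} of $M$ making the relevant square commute. So everything will reduce to invoking this uniqueness statement twice: once for the preservation of identities, and once for the preservation of composition.

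For the identity, I would observe that $\id_{\varphi_*N}:\varphi_*N\to\varphi_*N$ trivially fits into the diagram of Lemma \ref{lemma:pushforward2} with $f=\id_N$, since the pushout map $N\to\varphi_*N$ composed with $\id_{\varphi_*N}$ agrees with itself composed with $\id_N$. By uniqueness, $\varphi_*(\id_N)=\id_{\varphi_*N}$. For composition, given maps of \jtextensions{} $f:N\to P$ and $g:P\to Q$, the map $\varphi_*g\circ\varphi_*f:\varphi_*N\to\varphi_*Q$ is a map of \jtextensions{} of $M$ (as a composition of such), and pasting the two commutative squares
\begin{center}
\begin{tikzcd}
N \ar[r]\ar[d,"f"] & \varphi_*N \ar[d,"\varphi_*f"]\\
P \ar[r]\ar[d,"g"] & \varphi_*P \ar[d,"\varphi_*g"]\\
Q \ar[r]           & \varphi_*Q
\end{tikzcd}
\end{center}
shows that $\varphi_*g\circ\varphi_*f$ fits into the defining diagram of $\varphi_*(g\circ f)$. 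Hence by uniqueness $\varphi_*(g\circ f)=\varphi_*g\circ\varphi_*f$.

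There is essentially no obstacle here: all the real content has been pushed into Lemmas \ref{lemma:pushforward1} and \ref{lemma:pushforward2}, and the functoriality axioms are a purely formal consequence of the universal property of the pushout. The only thing to be careful about is that the uniqueness in Lemma \ref{lemma:pushforward2} is stated within the category of \jtextensions{} of $M$, so one must verify at each step that the candidate maps $\id_{\varphi_*N}$ and $\varphi_*g\circ\varphi_*f$ are indeed maps of \jtextensions{} of $M$; but this is immediate, since the identity always is, and composition of maps of \jtextensions{} is a map of \jtextensions{} by the remark following Definition \ref{definition:jtext}.
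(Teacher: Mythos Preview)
Your proof is correct and follows essentially the same approach as the paper's: both reduce to Lemmas \ref{lemma:pushforward1} and \ref{lemma:pushforward2} and invoke the uniqueness part of the universal property of the pushout to verify functoriality. You are simply more explicit, spelling out the identity axiom and the pasting argument for composition that the paper leaves implicit.
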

\begin{proof}
	In view of Lemmas \ref{lemma:pushforward1} and 
	\ref{lemma:pushforward2} it is enough to show that $\varphi_*$
	behaves well with respect to
	the composition of maps of \jtextensions{}. This is
	immediate from the construction in Lemma \ref{lemma:pushforward2} and
	the uniqueness part of the universal property of the pushout.
\end{proof}

\begin{definition}
	We call the functor of Proposition \ref{prop:pushforwardFunctor} the
	\emph{pushforward along $\varphi$}, and we denote it by $\varphi_*$.
\end{definition}

\begin{theorem}
	Let $\varphi:(L,r)\into (M,s)$ be an injective pure map of \pointed{}
	$R$-modules. Then the functor $\varphi_*$ is left adjoint to $\varphi^*$.
\end{theorem}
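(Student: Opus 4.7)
The plan is to exhibit, for every $N\in\jtcat(L,r)$ and $P\in\jtcat(M,s)$, a bijection
\[
\Phi_{N,P}:\Hom_{\jtcat(M,s)}(\varphi_*N,P) \isomto \Hom_{\jtcat(L,r)}(N,\varphi^*P)
\]
that is natural in both variables. Here I tacitly identify $\jtcat(L,r)$ with $\jtcat(\varphi(L),\restr{s}{\varphi(L)})$ via the isomorphism $\varphi:L\isomto\varphi(L)$, which is itself a map of $T$-pointed $R$-modules. Let $\kappa:N\to\varphi_*N$ denote the second pushout map, so that $\varphi_*i\circ\varphi=\kappa\circ i$.

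First I would set $\Phi(f):=f\circ\kappa$ for $f\in\Hom_{\jtcat(M,s)}(\varphi_*N,P)$. To check that $\Phi(f)$ actually lands in $\varphi^*P=\divm{J}{j(\varphi(L))}{P}$, I would use that $i:L\into N$ is a \jextension{}: for any $n\in N$ there is some $I\in J$ with $In\subseteq i(L)$, so
\[
I\cdot\Phi(f)(n)\subseteq f(\kappa(i(L)))=f(\varphi_*i(\varphi(L)))=j(\varphi(L)).
\]
Compatibility of $\Phi(f)$ with the structure maps to $T$ is inherited from $f$ and $\kappa$, and the identity $\Phi(f)\circ i=f\circ\varphi_*i\circ\varphi=j\circ\varphi$ witnesses compatibility with the extension structural maps.

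For the inverse, given $g:N\to\varphi^*P$ I would compose with the inclusion $\varphi^*P\into P$ to form $g':N\to P$. The condition that $g$ be a map of extensions of $\varphi(L)$ unfolds to $g\circ i=\varphi^*j\circ\varphi=j\circ\varphi$, which rereads as $g'\circ i=j\circ\varphi$; this is exactly the cocone condition needed to invoke the universal property of $\varphi_*N$ in the category of $T$-pointed $R$-modules (Proposition \ref{prop:pushout}) against the pair $(j:M\to P,\ g':N\to P)$. The resulting unique map $\Psi(g):\varphi_*N\to P$ of $T$-pointed $R$-modules satisfies $\Psi(g)\circ\varphi_*i=j$ and $\Psi(g)\circ\kappa=g'$, so it is in particular a map of $(J,T)$-extensions of $M$.

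Finally, $\Phi\circ\Psi=\id$ holds by construction, since $\Psi(g)\circ\kappa=g'$ corestricts to $g$; and $\Psi\circ\Phi=\id$ follows from the uniqueness clause of the pushout universal property, because both $f$ and $\Psi(\Phi(f))$ satisfy $(-)\circ\kappa=f\circ\kappa$ and $(-)\circ\varphi_*i=j$. Naturality in $N$ and $P$ is then a routine consequence of the same uniqueness together with the definitions of $\varphi^*$ and $\varphi_*$ on morphisms. The only genuinely non-formal step is the image containment $\Phi(f)(N)\subseteq\varphi^*P$ in the forward direction, which however reduces immediately to the \jextension{} property of $i$ combined with the pushout relation $\kappa\circ i=\varphi_*i\circ\varphi$; everything else is a direct application of Proposition \ref{prop:pushout}.
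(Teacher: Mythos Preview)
Your proposal is correct and follows essentially the same approach as the paper: both construct the bijection by composing with the pushout map $\kappa:N\to\varphi_*N$ in one direction and invoking the universal property of the pushout (Proposition \ref{prop:pushout}) in the other, with the only non-formal step being the verification that the image of $f\circ\kappa$ lands in $\varphi^*P$ via the \jextension{} property of $i$. The presentation differs slightly in that you build both maps explicitly and check they are mutual inverses, whereas the paper constructs one map and argues injectivity and surjectivity separately, but the mathematical content is identical.
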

\begin{proof}
	Since $\varphi$ is injective we will, for simplicity,
	denote $\varphi(L)$ by $L$.

	Let $(N,i,t)$ be a \jtextension{} of $L$ and let $(P,j,u)$ be a
	\jtextension{} of $M$. We want to show that we have
	\[ \Hom_{\jtcat{(L,r)}}(N,\varphi^*P) \cong
	   \Hom_{\jtcat{(M,s)}}(\varphi_*N,P) \]
	naturally in $N$ and $P$.

	Let $f:N\to \varphi^*P$ be a map of \jtextensions{} of $L$; notice that
	in particular $f\circ i=\varphi^*j$. Composing $f$ with the natural
	inclusion $\varphi^*P\into P$ we get a map of \pointed{} $R$-modules
	$f':N\to P$ such that $f'\circ i =j\circ \varphi$, so by the universal
	property of the pushout there exists a unique map 
	$g:\varphi_*N\to P$ that is a map of \jtextensions{} of $M$.

	We define a map
	\[ \Psi_{N,P}: \Hom_{\jtcat{(L,r)}}(N,\varphi^*P) \to
	   \Hom_{\jtcat{(M,s)}}(\varphi_*N,P) \]
	by letting $\Psi_{N,P}(f):=g$. The map $\Psi$ is natural in $N$ and
	$P$, since it is defined by means of a universal property.
	Indeed, if $h:N'\to N$ is a map of \jtextensions{} of $L$ and $f'=f\circ h$
	then $\Psi_{N',P}(f')$ is by definition the unique map $\varphi_*N'\to P$
	that makes the pushout diagram commute so it must coincide with
	$g\circ \varphi_*h$. Similarly if $k:P\to P'$ is a map of \jtextensions{}
	of $M$ then $\Psi_{N,P'}(\varphi^*k\circ f)$ must coincide with
	$k\circ g$.

	To see that the map $\Psi_{N,P}$ is injective, let $f':N\to \varphi^*P$
	be another map and assume that $\Psi_{N,P}(f)=\Psi_{N,P}(f')$. But then
	the composition of $\Psi_{N,P}(f)$ with the pushout map $N\to\varphi_*N$
	coincides with the composition of $f$ and the natural inclusion
	$\varphi^*P\into P$, and analogously for $f'$, so we conclude that
	$f=f'$.

	To see that $\Psi_{N,P}$ is surjective, let $g':\varphi_*N\to P$ be a
	map of \jtextensions{} of $M$. Then by definition of pullback its
	composition with $N\to \varphi_*N$ factors through $\varphi^*P\into P$
	as a map of \jtextensions{} $f':N\to\varphi^*P$, and again by the
	uniqueness of the map of the universal property of the pushout one
	can check that $\Psi_{N,P}(f')=g'$.
\end{proof}

\begin{remark}
\label{rem:unitCounit}
	Let $\varphi:L\into M$ be an injective and pure map of \pointed{} $R$-modules
	and let $(N,i,t)$ and $(P,j,u)$ be \jtextensions{} of $L$ and $M$
	respectively. We can give an explicit description of the unit
		      \[\eta_N:N\to\varphi^*\varphi_*N\]
	and the counit
		      \[\varepsilon_P:\varphi_*\varphi^*P\to P\]
	of the adjunction.

	Notice that the pushout map $N\to \varphi_*N$ is injective. Moreover, since
	$N$ is a \jextension{} of $L$, the image of this map is contained in
	$\varphi^*\varphi_*N=\divm{J}{\varphi_*i(\varphi(L))}{\varphi_*N}$.
	The resulting inclusion $N\into \varphi^*\varphi_*N$ is the unit $\eta_N$.

	By definition $\varphi^*P$ is contained in $P$, and the diagram
	\begin{center}
		\begin{tikzcd}
			L \ar[r,hook,"\varphi"] \ar[d] & M \ar[d,"j"] \\
			\varphi^*P \ar[r,hook] & P
		\end{tikzcd}
	\end{center}
	commutes, so by the universal property of the pushout there exists
	a map $\varphi_*\varphi^*P\to P$. This map is the counit $\varepsilon_P$.
\end{remark}

The following examples of pullback and pushforward functors are of particular
importance to us, because they will be key to the construction of
maximal \jtextensions{}.

\begin{definition}
	Let $M$ be a \pointed{} $R$-module and let $\inct M:M[J]\to M$ be the
	natural inclusion of its torsion submodule. We will call the pullback functor
	$\inct M^*$ the \emph{torsion} functor and we will denote it by $\jt$.
\end{definition}

\begin{remark} \label{rem:torsUnitIsIsom}
	For every \jtextension{} of $\jt(M)$ the unit map
	$\eta_N:\jt ((\inct M)_*N)\to N$ is an isomorphism. Indeed, we have
	$\jt((\inct M)_*N)=((\inct M)_*N)[J]=N[J]$, and since $N$ is a
	\jtextension{} of a $J$-torsion module and $J$ is complete then $N[J]=N$.
\end{remark}

Notice that the inclusion $\incs M$ of a \pointed{} $R$-module into its
saturation is injective and pure.

\begin{definition}
	Let $M$ be a \pointed{} $R$-module and let $\incs M:M\to\tp(M)$ be the
	inclusion into its saturation. We will call the pushforward functor
	$(\incs M)_*$ the \emph{saturation} functor and we will denote it by $\tp$.
\end{definition}

\begin{remark} \label{rem:satCounitIsIso}
	The counit map $\varepsilon_P:P\to\tp(\incs{M}^*P)$ is an isomorphism.
	Indeed, one can see from the definition of pullback that $\incs M^*P=P$
	is saturated, hence it coincides with its own saturation.
\end{remark}

\subsection{Maximal \jtextensions{}}
\label{sec:maxJText}

Maximal \jtextensions{} are the analogue of algebraic closures in field
theory. The main result of this section is the proof of the existence
of a maximal \jtextension{} for any \pointed{} $R$-module, and we achieve
this by first constructing such an extension for its torsion and its saturation.

\begin{definition}
	A \jtextension{} $\Gamma$ of the \pointed{} $R$-module $M$ is called
	\emph{maximal} if for every \jtextension{} $N$ of $M$ there is a map of
	\jtextensions{} $\varphi:N\into\Gamma$.
\end{definition}
 
The definition of \pointed{} $R$-module already provides a maximal
\jtextension{} for any $J$-torsion module.

\begin{lemma} \label{lemma:maxJTextTorsion}
	Let $(M,s)$ be a \pointed{} $R$-module. If $M$ is $J$-torsion, then
	$(T,s,\id_T)$ is a maximal \jtextension{} of $(M,s)$.
\end{lemma}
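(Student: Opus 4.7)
The plan is to first verify that $(T,s,\id_T)$ is itself a $(J,T)$-extension of $(M,s)$, and then to show that for every $(J,T)$-extension $(N,i,t)$ of $(M,s)$ the structural pointing map $t$ already provides the required morphism into $(T,s,\id_T)$.

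For the first step, since $T$ is $J$-torsion we have $T[J]=T$, so $\id_T$ makes $(T,\id_T)$ into a (saturated) $T$-pointed $R$-module. The given map $s$, viewed as a map $M\to T$ (which is legitimate since $M=M[J]$), is injective by the very definition of a $T$-pointed $R$-module and satisfies $\id_T\circ\restr{s}{M[J]}=s$, so it is a map of $T$-pointed $R$-modules into $(T,\id_T)$. Finally, $s$ is a $J$-map because $T/s(M)$ is a quotient of the $J$-torsion module $T$, hence itself $J$-torsion.

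The maximality part rests on the following observation: any $(J,T)$-extension $(N,i,t)$ of a $J$-torsion module is itself $J$-torsion. Indeed, since $M=M[J]$, the image $i(M)$ lies in $N[J]$, so $N=\divm{J}{i(M)}{N}\subseteq\divm{J}{N[J]}{N}$; by completeness of $J$ together with Lemma \ref{lemma:propertiesOfDivision}(2), the latter equals $N[J]$, giving $N=N[J]$. Consequently, $t$ is an $R$-linear map defined on all of $N$, and I claim $\varphi:=t$ is a map of $(J,T)$-extensions $N\to T$. It is automatically a map of $T$-pointed $R$-modules into $(T,\id_T)$, since $\id_T\circ\restr{t}{N[J]}=t$ holds trivially, and the identity $\varphi\circ i=s$ follows from the fact that $i$ is a map of $T$-pointed $R$-modules from $(M,s)$ to $(N,t)$ combined with $\restr{i}{M[J]}=i$.

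I expect no substantive obstacle here: the result is essentially a formal consequence of the definitions. The only non-trivial input is that completeness of $J$ forces an extension of a $J$-torsion module to remain $J$-torsion, after which the pointing datum $t$ of an arbitrary $(J,T)$-extension is itself the morphism we seek, without any need to appeal to $J$-injectivity of $T$ or to Lemma \ref{lemma:diffIsTorsion}.
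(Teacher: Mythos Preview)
Your proof is correct and follows essentially the same approach as the paper: first observe that any \jtextension{} $N$ of a $J$-torsion module is itself $J$-torsion (using completeness of $J$), and then note that the pointing map $t:N\to T$ is already the required morphism of \jtextensions{}. One small remark: the identity $\divm{J}{N[J]}{N}=N[J]$ follows directly from the definition of completeness applied with the submodule $0$, so the appeal to Lemma~\ref{lemma:propertiesOfDivision}(2) is superfluous there.
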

\begin{proof}
	If $(N,i,t)$ is a \jtextension{} of $M$, then in particular we have
	\begin{align*}
		N=\divm {J}{i(M)}N=\divm J{\divm J0{i(M)}}N\subseteq
			\divm J{\divm J0N}N=\divm J0N=N[J]
	\end{align*}
	so $N$ is $J$-torsion. Then $t:N\into T$ satisfies $t\circ i=s$
	and $\id_T\circ t=t$, so it is a map of \jtextensions{}.
\end{proof}

The existence of a maximal \jtextension{} of a saturated module comes from
the existence of a \jhull{}, and it requires only a little more technical work.

\begin{lemma} \label{lemma:maxJTextPush}
	Let $(M,s)$ be a saturated \pointed{} $R$-module
	and let $\iota:M\into \Gamma$ be a \jhull{} of $M$. Then
	\begin{enumerate}
		\item $\restr\iota{M[J]}:M[J]\into \Gamma[J]$ is an isomorphism.
		\item $(\Gamma,\iota,\tau)$ is a maximal \jtextension{} of $(M,s)$,
		      where $\tau:=s\circ \restr\iota{M[J]}^{-1}$. 
	\end{enumerate}
\end{lemma}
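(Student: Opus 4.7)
The plan is to handle the two parts separately, with (1) feeding directly into the proof of (2).

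For part (1), the essential observation is that saturation of $(M,s)$ means $s:M[J]\isomto T$, so $M[J]$ is itself $J$-injective. Since $\iota:M\into\Gamma$ is a $J$-hull, it is an essential $J$-extension, and Lemma \ref{lemma:Jessential} tells me that the restriction $\restr{\iota}{M[J]}:M[J]\into\Gamma[J]$ is then also an essential extension. It is automatically a $J$-map, since $\Gamma[J]$ is $J$-torsion. Proposition \ref{proposition:EssentialExtensionOfInjective}, applied to this essential $J$-extension of the $J$-injective module $M[J]$, forces $\restr{\iota}{M[J]}$ to be an isomorphism.

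For part (2), using (1) the map $\tau:=s\circ\restr{\iota}{M[J]}^{-1}:\Gamma[J]\into T$ is well-defined and injective, and satisfies $\tau\circ\restr{\iota}{M[J]}=s$ by construction. Hence $(\Gamma,\tau)$ is a \pointed{} $R$-module and $\iota$ is a map of \pointed{} $R$-modules; being also a \jextension{}, $(\Gamma,\iota,\tau)$ is a \jtextension{} of $(M,s)$. For maximality, let $(N,i,t)$ be any \jtextension{} of $(M,s)$. Since $\Gamma$ is \jinj{} and $i$ is a \jextension{}, I obtain an $R$-module map $\varphi:N\to\Gamma$ with $\varphi\circ i=\iota$. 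The key point is that $N$ is automatically saturated: from $t\circ\restr{i}{M[J]}=s$ and the surjectivity of $s$ onto $T$, the injection $t:N[J]\into T$ is also surjective, hence an isomorphism; moreover for any $n\in N[J]$ there exists $m\in M[J]$ with $t(n)=s(m)=t(i(m))$, and injectivity of $t$ gives $n=i(m)$, so $N[J]=i(M[J])$.

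Therefore, writing any $n\in N[J]$ as $n=i(m)$ with $m\in M[J]$, I compute
\begin{align*}
	\tau(\varphi(n))=\tau(\iota(m))=s(m)=t(i(m))=t(n),
\end{align*}
so $\tau\circ\restr{\varphi}{N[J]}=t$, which shows $\varphi$ is a map of \pointed{} $R$-modules and hence of \jtextensions{} (injectivity then follows from Lemma \ref{lemma:jthomIsInjective}). The main obstacle is verifying compatibility of $\varphi$ with the $T$-pointed structures; this is resolved by the observation that saturation of $(M,s)$ propagates to $(N,t)$, so the torsion of $N$ is already exhausted by $i(M[J])$, on which compatibility is forced by the definition of $\tau$.
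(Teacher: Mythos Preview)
Your proof is correct and follows essentially the same route as the paper: part (1) via Lemma \ref{lemma:Jessential} and Proposition \ref{proposition:EssentialExtensionOfInjective}, and part (2) by lifting $\iota$ through the \jinj{} module $\Gamma$ and checking compatibility with the $T$-pointed structures. Your argument is in fact more explicit than the paper's at the key compatibility step: the paper asserts $\tau\circ\restr{\varphi}{N[J]}=t$ directly from agreement on $i(M[J])$, whereas you spell out why $N[J]=i(M[J])$ (using surjectivity of $s$ and injectivity of $t$), which is precisely what makes that assertion valid.
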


\begin{proof}
	For (1) notice that $\restr\iota{M[J]}:M[J]\into \Gamma[J]$ is an essential 
	extension by Lemma \ref{lemma:Jessential}, so it is an isomorphism by 
	Proposition \ref{proposition:EssentialExtensionOfInjective}.

	For (2) we have that $\Gamma$ is a \jtextension{} of $M$, because it is a
	\jextension{} and $\tau\circ\restr\iota{M[J]}=s$. Let $(N,i,t)$ be any
	\jtextension{} of $M$. Since $i:M\into N$ is a \jextension{}, there is a
	homomorphism $\varphi:N\to\Gamma$ such that $\varphi\circ i=\iota$.
	Moreover, since $t\circ\restr i{M[J]}=s$ and 
	$\tau\circ\restr{(\varphi\circ i)}{M[J]}= \tau\circ\restr\iota{[M[J]}=s$, 
	we have $\tau\circ\restr\varphi{N[J]}=t$, so $\varphi$ is a map of 
	\jtextensions{}. It follows that $\Gamma$ is a maximal \jtextension{} of $M$.
\end{proof}

Finally we can construct a \jtextension{} of any \pointed{} $R$-module.

\begin{proposition} \label{proposition:maxJTextEquivalencePT}
	Let $(\Gamma,\iota,\tau)$ be a \jtextension{} of the \pointed{} $R$-module
	$(M,s)$ such that $\Gamma$ is saturated.
	Then $\Gamma$ is a maximal \jtextension{} of $M$ if and only if
	$\tp(\Gamma)$ is a maximal \jtextension{} of $\tp(M)$.
\end{proposition}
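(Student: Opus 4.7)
The proof plan rests on two tools developed earlier in this section: the adjunction $\tp = (\incs{M})_* \dashv \incs{M}^*$ between $\jtcat(M,s)$ and $\jtcat(\tp(M))$, and Remark \ref{rem:satCounitIsIso}, which guarantees that the counit $\varepsilon_P : \tp(\incs{M}^* P) \to P$ is an isomorphism for every $P \in \jtcat(\tp(M))$. The saturatedness hypothesis on $\Gamma$ is what will let us invert the unit map in the one case that matters.

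The heart of the argument is the identification $\incs{M}^* \tp(\Gamma) \cong \Gamma$ as $J$-extensions of $M$. To see this, I would observe that since $\Gamma$ is saturated, the composition of $\tau^{-1} \colon T \isomto \Gamma[J]$ with the inclusion $\Gamma[J] \into \Gamma$, together with $\iota$, defines a canonical map $\bar\iota : \tp(M) \to \Gamma$ extending $\iota$ (using the pushout description of $\tp(M)$), endowing $\Gamma$ with the structure of an object of $\jtcat(\tp(M))$. Under this structure, one computes directly that $\incs{M}^* \Gamma = \divm{J}{\iota(M)}{\Gamma} = \Gamma$, since $\iota \colon M \into \Gamma$ is already a \jextension{}. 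Applying Remark \ref{rem:satCounitIsIso} to $\Gamma$ viewed as an element of $\jtcat(\tp(M))$ gives $\tp(\Gamma) = (\incs{M})_* \Gamma \cong \Gamma$ as $\tp(M)$-extensions, and pulling back again yields the desired isomorphism $\incs{M}^* \tp(\Gamma) \cong \Gamma$ in $\jtcat(M,s)$.

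With this identification in hand, both implications follow formally from the adjunction. For the forward direction, suppose $\Gamma$ is maximal and let $P \in \jtcat(\tp(M))$; then $\incs{M}^* P \in \jtcat(M,s)$, so maximality of $\Gamma$ supplies a map $f : \incs{M}^* P \to \Gamma$. Applying $\tp$ and precomposing with $\varepsilon_P^{-1}$ produces a map $P \to \tp(\Gamma)$, establishing maximality of $\tp(\Gamma)$. For the converse, suppose $\tp(\Gamma)$ is maximal and let $N \in \jtcat(M,s)$; then $\tp(N) \in \jtcat(\tp(M))$, and maximality of $\tp(\Gamma)$ yields a map $g : \tp(N) \to \tp(\Gamma)$. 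Under the adjunction bijection this corresponds to a map $N \to \incs{M}^* \tp(\Gamma) \cong \Gamma$ in $\jtcat(M,s)$, as required.

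The main obstacle is the key identification $\incs{M}^* \tp(\Gamma) \cong \Gamma$: while it is conceptually transparent, one must carefully check that the canonical $\tp(M)$-extension structure on a saturated $\Gamma$ is compatible with its $T$-pointing and with the pushout construction of $\tp(\Gamma)$, so that all comparison maps are genuine morphisms of \jtextensions{}. The saturatedness hypothesis on $\Gamma$ is precisely what enables the construction of $\bar\iota \colon \tp(M) \to \Gamma$ and makes the identification go through.
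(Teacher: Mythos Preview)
Your argument is correct and follows essentially the same route as the paper's proof: both directions hinge on Remark~\ref{rem:satCounitIsIso} and on the saturatedness of $\Gamma$, and the forward direction is identical in substance. The only difference is one of packaging in the converse direction: the paper writes down the explicit map $\varphi=\incs{\Gamma}^{-1}\circ f\circ \incs{N}$ and checks by hand that $\varphi\circ i=\iota$ and $\tau\circ\restr{\varphi}{N[J]}=t$, whereas you invoke the adjunction bijection $\Hom_{\jtcat(\tp(M))}(\tp(N),\tp(\Gamma))\cong\Hom_{\jtcat(M)}(N,\incs{M}^*\tp(\Gamma))$ together with your identification $\incs{M}^*\tp(\Gamma)\cong\Gamma$. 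Unwinding the adjunction via Remark~\ref{rem:unitCounit} shows that your map is exactly $\incs{\Gamma}^{-1}\circ g\circ \incs{N}$, so the two arguments coincide; the paper's direct verification is precisely the content of the compatibility check you flag as the main obstacle.
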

\begin{proof}
	Assume first that $\Gamma$ is a maximal \jtextension{} of $M$ and let
	$(N,i,t)$ be a \jtextension{} of $\tp(M)$.
	Then there is a map $\varphi:\incs M^*N\to \Gamma$ of \jtextensions{} of
	$M$, so there is a map $\tp(\varphi):\tp(\incs M^*N)\to \tp(\Gamma)$
	of \jtextensions{} of $\tp(M)$. By Remark \ref{rem:satCounitIsIso}
	we have $N\cong\tp(\incs M^*N)$, so there is also a map $N\to
	\tp(\Gamma)$.  This proves that $\tp(\Gamma,\iota,\tau)$ is a maximal
	\jtextension{} of $\tp(M)$.

	Assume now that $\tp(\Gamma)$ is a maximal \jtextension{} of $\tp(M)$.
	Let $(N,i,t)$ be a \jtextension{} of $M$. Then there is a map of
	\jtextensions{} $f:\tp(N)\to\tp(\Gamma)$
	completing the following diagram:
	\begin{center}
		\begin{tikzcd}
			M[J]\arrow[rrr,hook]\arrow[ddd,hook,"s"]
			\arrow[drr,hook,"\restr i{M[J]}"] & & &
			M\arrow[ddd,hook,"\incs M"]\arrow[drr,hook,"i"]
			\arrow[rrrrrrd,hook,"\iota"]\\
			& & N[J] \arrow[rrr, hook]\arrow[ddd, hook, "t"] & & &
			N \arrow[ddd, hook, "\incs N"] \arrow[rrrr,dashed,"\varphi"]& & & &
			\Gamma\arrow[ddd,hook,"\incs \Gamma"]\\ \\
			T \arrow[rrr, hook, "\tp(s)^{-1}"]
				\arrow[drr, hook, "\id_T"] & & &
			\tp(M) \arrow[drr, hook,swap, "\tp(i)"]
				\arrow[drrrrrr,hook,"\tp(\iota)"] \\
			& & T \arrow[hook, rrr, swap, "\tp(t)^{-1}"] & & &
				\tp(N)\arrow[rrrr,swap,hook,"f"]
			& & & & \tp(\Gamma)
		\end{tikzcd}
	\end{center}
	Notice that since $\Gamma$ is saturated the map
	$\incs \Gamma:\Gamma\into\tp(\Gamma)$ is an isomorphism. So we can define 
	$\varphi:=\incs \Gamma^{-1}\circ f\circ \incs N:N\to \Gamma$ and we have
	\begin{align*}
		\incs \Gamma\circ\varphi\circ i =f\circ \incs N\circ i=
		f\circ \tp(i)\circ \incs M= \tp(\iota)\circ \incs s =\incs \Gamma
		\circ \iota
	\end{align*}
	hence $\varphi\circ i=\iota$. Moreover, since
	$\tp(\tau)\circ\incs \Gamma=\tau$, we have
	\begin{align*}
		\tau\circ\restr\varphi{N[J]}
		&=\tau\circ\incs \Gamma^{-1}\circ f\circ\restr {\incs N}{N[J]}= \\
		&=\tau\circ\incs \Gamma^{-1}\circ f\circ \tp(t)^{-1}\circ t= \\
		&=\tau\circ\incs \Gamma^{-1}\circ\tp(\tau)^{-1}\circ t=\\&=t
	\end{align*}
	so $\varphi$ is a map of \jtextensions{}.
	Hence $\Gamma$ is a maximal \jtextension{} of $M$.
\end{proof}

\begin{theorem} \label{theorem:existenceUniquenessMaximalJText}
	Every \pointed{} $R$-module $M$ admits a maximal \jtextension{}.
	Moreover, for any maximal \jtextension{} $\Gamma$ of $M$ the following hold:
	\begin{enumerate}
		\item If $\Gamma'$ is another maximal \jtextension{} of $M$, then
		      $\Gamma\cong\Gamma'$ as \jtextensions{};
		\item The module $\Gamma$ is saturated;
		\item The module $\Gamma$ is \jinj{};
		\item If $(N,i,t)$ is a \jtextension{} of $M$ and $\varphi:N\to\Gamma$
		      is a map of \jtextensions{}, then $(\Gamma,\varphi,\tau)$ is a 
			  maximal \jtextension{} of $(N,t)$.
	\end{enumerate}
\end{theorem}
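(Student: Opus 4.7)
For existence, my plan is a two-stage construction. First, I form the saturation $\tp(M)$, and note that the canonical inclusion $\incs M\colon M\into\tp(M)$ exhibits $\tp(M)$ as a \jtextension{} of $M$: it is injective by Proposition \ref{prop:pushout}, and $\tp(M)/M$ is a quotient of $T$, hence $J$-torsion. Second, I take a $J$-hull $\iota\colon\tp(M)\into\Omega$ (Theorem \ref{theorem:Jhull}). By Lemma \ref{lemma:maxJTextPush}, this $\Omega$ is a maximal \jtextension{} of $\tp(M)$ and $\Omega[J]\cong T$, so $\Omega$ is saturated. Proposition \ref{proposition:maxJTextEquivalencePT} now transfers this maximality down from $\tp(M)$ to $M$, so the composition $M\into\tp(M)\into\Omega$ is a maximal \jtextension{} of $M$.

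For (1), (2), and (3), my strategy is to prove the properties for this specific $\Omega$ and transfer them to an arbitrary maximal \jtextension{} $\Gamma$ of $M$ via an isomorphism $\Omega\isomto\Gamma$. Since $\Omega$ is a \jtextension{} of $M$, maximality of $\Gamma$ yields a map $f\colon\Omega\to\Gamma$ of \jtextensions{}. Writing $\tau_\Omega$ and $\tau_\Gamma$ for the structural maps to $T$, saturation of $\Omega$ makes $\tau_\Omega$ an isomorphism; the relation $\tau_\Gamma\circ f|_{\Omega[J]}=\tau_\Omega$ combined with the injectivity of $\tau_\Gamma$ forces both $\tau_\Gamma$ and $f|_{\Omega[J]}$ to be isomorphisms, which gives (2). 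Next, $J$-injectivity of $\Omega$ produces a retraction $g\colon\Gamma\to\Omega$ of $R$-modules with $g\circ f=\id_\Omega$, and using that $f|_{\Omega[J]}$ is an isomorphism a short computation shows that $g$ is a map of \jtextensions{} of $M$, hence injective by Lemma \ref{lemma:jthomIsInjective}; since $g$ admits $f$ as a section it is bijective, so $\Gamma\cong\Omega$. This proves (1), and (3) follows as $\Omega$ is $J$-injective by construction.

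For (4), given $\varphi\colon N\into\Gamma$ and a \jtextension{} $(P,j,u)$ of $N$, I view $P$ as a \jtextension{} of $M$ via $j\circ i$ and apply maximality of $\Gamma$ over $M$ to obtain a map $\psi\colon P\to\Gamma$ of \jtextensions{} of $M$. In general $\psi\circ j\neq\varphi$, so I correct $\psi$ by a torsion-valued term. The difference $\delta:=\varphi-\psi\circ j\colon N\to\Gamma$ takes values in $\Gamma[J]\cong T$ by Lemma \ref{lemma:diffIsTorsion}, vanishes on the image of $M$, and vanishes on $N[J]$ because both $\varphi$ and $\psi\circ j$ are compatible with the $T$-pointed structure and $\tau_\Gamma$ is injective. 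So $\delta$ descends to $\bar\delta\colon N/N[J]\to T$; the inclusion $N/N[J]\into P/P[J]$ is a $J$-extension (since $P/N$ is $J$-torsion), so by $J$-injectivity of $T$ I extend $\bar\delta$ to a map $P/P[J]\to T$ and lift to $\tilde\delta\colon P\to\Gamma[J]\subseteq\Gamma$. Then $\psi+\tilde\delta$ is the desired map of \jtextensions{} of $N$ from $P$ to $\Gamma$. The main technical subtlety is that $\tilde\delta$ must simultaneously extend $\delta$ on $N$ and vanish on $P[J]$, which is exactly what forces the descent through $N/N[J]\into P/P[J]$ rather than a direct extension of $\delta$ along $N\into P$ via $J$-injectivity of $T$.
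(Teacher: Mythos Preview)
Your existence argument and your treatment of (1)--(3) are essentially the paper's proof: both build $\Omega$ as a $J$-hull of $\tp(M)$, invoke Lemma~\ref{lemma:maxJTextPush} and Proposition~\ref{proposition:maxJTextEquivalencePT}, and then compare an arbitrary maximal $\Gamma$ to $\Omega$ using the $J$-injectivity of $\Omega$ together with the fact that the comparison map is an isomorphism on $J$-torsion. The paper packages the last step via Lemma~\ref{lemma:Jessential} and Proposition~\ref{proposition:EssentialExtensionOfInjective}, while you unpack the retraction argument directly; the content is the same.

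Your proof of (4), however, takes a genuinely different route. The paper argues structurally: since $\tp(M)\into\Gamma$ is a $J$-hull, Theorem~\ref{theorem:Jhull}(1) makes $\tp(\varphi)\colon\tp(N)\into\Gamma$ a $J$-hull as well, so Lemma~\ref{lemma:maxJTextPush} and Proposition~\ref{proposition:maxJTextEquivalencePT} rerun the existence construction with $N$ in place of $M$. You instead give an explicit correction: starting from a map $\psi\colon P\to\Gamma$ over $M$, you measure the defect $\delta=\varphi-\psi\circ j$, observe it is $\Gamma[J]$-valued and kills $N[J]$, descend it to $N/N[J]$, extend along the $J$-extension $N/N[J]\into P/P[J]$ using $J$-injectivity of $T$, and add the resulting $\tilde\delta$ back to $\psi$. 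This is correct (the key points being that $\tilde\delta$ vanishes on $P[J]$, so $\psi+\tilde\delta$ remains a map of $T$-pointed modules, and that $\tilde\delta\circ j=\delta$ restores $(\psi+\tilde\delta)\circ j=\varphi$). The paper's approach is shorter and reuses existing machinery; yours is more self-contained and makes visible the mechanism by which two embeddings over $M$ differ by a torsion cocycle that can be propagated---a perspective that resonates with the later Section~\ref{section:aut} material on $\Aut_{M+N[J]}(N)\cong\Hom(N/(i(M)+N[J]),N[J])$.
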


\begin{proof}
	Let $j:\tp(M)\into\Gamma$ be a \jhull{} of the \push{} of $M$ and 
	let $\tau:=\tp(s)\circ\restr j{\tp(M)[J]}^{-1}$. By Lemma 
	\ref{lemma:maxJTextPush} we have that $(\Gamma,j,\tau)$ is a maximal 
	\jtextension{} of $\tp(M)$.
	By Remark \ref{rem:satCounitIsIso} we have that $(\Gamma,\iota,\tau)=
	\inct M^*(\Gamma,j,\tau)$ is a \jtextension{} of $M$ such that
	$\tp(\Gamma,\iota,\tau)\cong (\Gamma,j,\tau)$, so by
	Proposition \ref{proposition:maxJTextEquivalencePT} we conclude that
	it is a maximal \jtextension{} of $M$.

	Let now $(\Gamma',\iota',\tau')$ be another maximal \jtextension{} of 
	$(M,s)$. Then there is a map of \jtextensions{} $f:\Gamma\into \Gamma'$ 
	which is an essential $J$-extension by Lemma
	\ref{lemma:Jessential}, as it is an isomorphism on the $J$-torsion.
	Since $\Gamma$ is \jinj{} we have that $f$ is an 
	isomorphism by Proposition \ref{proposition:EssentialExtensionOfInjective}. 
	This shows that any maximal \jtextension{} of $M$ is isomorphic to 
	$\Gamma$, which proves (1), (2) and (3) at once.

	For (4) it is enough to notice that if $j:\tp(M)\into \Gamma$ is a 
	\jhull{}, then so is $\tp(\varphi)$, thus by the same
	argument as above $\Gamma$ is a maximal \jtextension{} of $N$.
\end{proof}


\section{Automorphisms of \jtextensions{}}
\label{section:aut}

Fix for this section a ring $R$, a complete ideal filter $J$ of $R$
and a $J$-torsion and \jinj{} left $R$-module $T$.
Fix moreover a \pointed{} $R$-module $(M,s)$ and a maximal \jtextension{}
$(\Gamma,\iota,\tau)$ of $(M,s)$.

\subsection{Normal extensions}

We define normal extensions in analogy with field theory.

\begin{definition}
	A \jtextension{} $i:M\into N$ is called
	\emph{normal} if every injective $J$-map
	$f:N\into \Gamma$ such that $f\circ i=\iota$ has the same image.
\end{definition}

Notice that we are considering all injective $J$-maps that respect
$\iota:M\into \Gamma$, even if they are not maps of \jtextensions{},
that is even if they do not respect the embeddings of the torsion
submodules into $T$.

\begin{remark}
	Although we will not make use of it, it interesting to notice that
	the group $\Aut_M(N)$ acts on $\Emb_M(N,\Gamma)$ by composition
	on the right. It is then easy to see that $N$ is normal if and
	only if this action is transitive.

	This is reminiscent of Galois theory \emph{à la Grothendieck}. One
	might wonder if, assuming the necessary finiteness conditions on
	automorphism groups hold, the category of \jtextensions{}
	is indeed a Galois category with fundamental functor
	$\Emb_M(-,\Gamma)$. Unfortunately, the fact that in general pushouts
	of \jtextensions{} do not exist (see Remark \ref{remark:noPushout})
	implies that this is not the case.

	We may refine this question as follows: does the
	category of \jtextensions{} embed as the subcategory of connected
	objects of some Galois category?
\end{remark}

\begin{proposition}
	Every saturated \jtextension{} of $M$ is normal.
\end{proposition}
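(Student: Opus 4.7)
The plan is, given any injective \jmap{} $f:N\to\Gamma$ with $f\circ i=\iota$, to identify $f(N)$ with the image of a canonical map $\varphi:N\to\Gamma$ of \jtextensions{}. Since this canonical image depends only on $(N,i,t)$ and not on $f$, applying the argument to two such maps $f$ and $g$ will give $f(N)=g(N)$, proving normality.

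First I would invoke the maximality of $\Gamma$ as a \jtextension{} of $(M,s)$ (Theorem \ref{theorem:existenceUniquenessMaximalJText}): applied to the \jtextension{} $(N,i,t)$, it yields a map of \jtextensions{} $\varphi:N\to\Gamma$, which is injective by Lemma \ref{lemma:jthomIsInjective}. The key observation in this step is that, because both $(N,i,t)$ and $(\Gamma,\iota,\tau)$ are saturated, $\varphi$ restricts to an isomorphism $\varphi|_{N[J]}:N[J]\isomto\Gamma[J]$: indeed $\tau\circ\varphi|_{N[J]}=t$ by definition of a map of \jtextensions{}, and both $t$ and $\tau$ are isomorphisms onto $T$. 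In particular $\Gamma[J]\subseteq\varphi(N)$.

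Next I would compare $f$ with $\varphi$ using Lemma \ref{lemma:diffIsTorsion}: since $f\circ i=\varphi\circ i=\iota$, the difference $f-\varphi:N\to\Gamma$ takes values in $\Gamma[J]$. Writing $f(n)=\varphi(n)+(f(n)-\varphi(n))$ and using $\Gamma[J]\subseteq\varphi(N)$ gives the inclusion $f(N)\subseteq\varphi(N)$ immediately.

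The hard part will be the reverse inclusion $\varphi(N)\subseteq f(N)$, which amounts to showing that $\Gamma[J]\subseteq f(N)$, or equivalently that $f$ restricts to a surjection $N[J]\onto\Gamma[J]$. My plan for this step is to study the composition $\varphi^{-1}\circ f:N\to N$: it is well-defined since $f(N)\subseteq\varphi(N)$, it is injective, and it restricts to the identity on $i(M)$ because $f$ and $\varphi$ both satisfy $(\,\cdot\,)\circ i=\iota$. The saturated structure of $N$ should then force this endomorphism to be surjective, giving $f(N)=\varphi(N)$. Running the same argument with any other injective \jmap{} $g:N\to\Gamma$ satisfying $g\circ i=\iota$ yields $g(N)=\varphi(N)$, and therefore $f(N)=g(N)$, as required for normality.
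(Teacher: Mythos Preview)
There is a genuine gap in your ``hard part'', and the detour through the canonical map $\varphi$ does not close it. You correctly identify that everything reduces to showing $f|_{N[J]}:N[J]\to\Gamma[J]$ is surjective. But the proposed plan is circular: the endomorphism $h:=\varphi^{-1}\circ f:N\to N$ is injective and fixes $i(M)$, so by Lemma~\ref{lemma:diffIsTorsion} one has $h(n)-n\in N[J]$ for every $n\in N$, hence $h(N)+N[J]=N$ and surjectivity of $h$ is equivalent to surjectivity of $h|_{N[J]}:N[J]\to N[J]$. Since $\varphi|_{N[J]}$ is an isomorphism, this is in turn equivalent to surjectivity of $f|_{N[J]}:N[J]\to\Gamma[J]$, which is exactly where you started. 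Nothing about the saturated structure of $N$ has been used beyond what you already established for $\varphi$, so the phrase ``the saturated structure of $N$ should then force this endomorphism to be surjective'' is not yet an argument.

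By contrast, the paper's proof dispenses with the auxiliary $\varphi$ and exploits the symmetry between the two given embeddings. For any two injective $J$-maps $f,g:N\hookrightarrow\Gamma$ with $f\circ i=g\circ i=\iota$, Lemma~\ref{lemma:diffIsTorsion} gives $f(n)-g(n)\in\Gamma[J]$ for every $n$. The paper then asserts directly that $\Gamma[J]\subseteq g(N)$ (``since $N$ is saturated and $g$ is injective''), whence $f(N)\subseteq g(N)$; if $f(N)\neq g(N)$ one reaches an immediate contradiction. Note that this asserted inclusion is precisely the surjectivity of $g|_{N[J]}$ onto $\Gamma[J]$ --- the very point your argument hinges on --- but the paper treats it as the key step to be invoked rather than something to be deduced from a further construction. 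Your route through $\varphi$ proves this inclusion cleanly for the one special map $\varphi$ (because $\varphi$ respects the $T$-pointed structure), at the cost of then needing it again for the arbitrary $f$, where no such shortcut is available.
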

\begin{proof}
	Assume that $M$ is saturated, let $i:M\into N$ be a \jtextension{} and
	let $f,g:N\into \Gamma$ be injective $J$-maps with
	$f\circ i=g\circ i=\iota$.
	If $f(N)\neq g(N)$, we may assume without loss of generality that there
	is $n\in N$ with $f(n)\not \in g(N)$. Then $t:=f(n)-g(n)\in \Gamma[J]$
	by Lemma \ref{lemma:diffIsTorsion}. Since $N$ is saturated and $g$ is
	injective we have $t\in g(N)$, thus $f(n)=g(n)+t\in g(N)$, a
	contradiction. We deduce that $f(N)=g(N)$, so $N$ is normal.
\end{proof}

\begin{corollary}
	Every maximal \jtextension{} is normal.
\end{corollary}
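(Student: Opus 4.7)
The plan is to deduce the corollary directly from the preceding proposition together with Theorem \ref{theorem:existenceUniquenessMaximalJText}. Let $(\Gamma, \iota, \tau)$ be a maximal \jtextension{} of $(M, s)$. Part (2) of Theorem \ref{theorem:existenceUniquenessMaximalJText} tells us that $\Gamma$ is saturated as a \pointed{} $R$-module, regardless of whether $M$ itself is saturated. Since $\Gamma$ is, by definition, a \jtextension{} of $M$ and is saturated, the preceding proposition applies and yields that $\Gamma$ is normal.

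Concretely, unpacking the definitions: given any injective $J$-map $f:\Gamma\into\Gamma$ with $f\circ\iota=\iota$, we can take $g=\id_\Gamma$ in the preceding proposition (which clearly also satisfies $g\circ\iota=\iota$), obtaining $f(\Gamma)=g(\Gamma)=\Gamma$. Hence any such $f$ is surjective, so all such $f$ have the same image, which is the normality condition for $\Gamma$ viewed as a \jtextension{} of $M$.

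No real obstacle arises: the corollary is essentially a one-line chaining of two already-proved statements, namely ``maximal implies saturated'' (Theorem \ref{theorem:existenceUniquenessMaximalJText}(2)) and ``saturated implies normal'' (the preceding proposition).
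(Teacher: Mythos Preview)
Your proof is correct and follows exactly the intended route: the corollary is immediate from Theorem \ref{theorem:existenceUniquenessMaximalJText}(2) (maximal implies saturated) combined with the preceding proposition (saturated implies normal). The concrete unpacking in your second paragraph is fine but unnecessary---the one-line chaining you describe in the first and third paragraphs is all that is needed.
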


\subsection{A fundamental exact sequence}

\begin{proposition} \label{prop:homFixTorsIsSat}
	Let $(N,i,t)$ be a normal \jtextension{} of $(M,s)$ 
	and let $\Aut_{M+N[J]}(N)$ denote the subgroup of $\Aut_M(N)$
	consisting of those automorphisms that restrict to the identity on
	the submodule of $N$ generated by $i(M)$ and $N[J]$.
	Then the restriction map along $\incs N:N\to \tp(N)$
	\begin{align*}
		\Aut_{\tp(M)}(\tp(N)) \to \Aut_{M+N[J]}(N)
	\end{align*}
	is a well-defined group isomorphism.
\end{proposition}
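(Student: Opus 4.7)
The plan is to prove the statement in three steps: well-definedness of the restriction, injectivity, and surjectivity. I expect the only subtle point — and the only place where the normality of $N$ is really used — to be well-definedness, namely showing that every $\varphi \in \Aut_{\tp(M)}(\tp(N))$ preserves $\incs N(N) \subseteq \tp(N)$, so that the restriction of $\varphi$ to $N$ is well-defined. By Lemma \ref{lemma:diffIsTorsion} we only know a priori that $\varphi(\incs N(n)) - \incs N(n) \in \tp(N)[J] = T$ for every $n \in N$, and such a difference in $T$ need not lie in $\incs N(N[J])$; it is precisely normality that rules out this extra freedom.

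For the well-definedness step, I would embed $\tp(N)$ into the fixed maximal \jtextension{} $\Gamma$ of $M$ (such an embedding exists by Theorem \ref{theorem:existenceUniquenessMaximalJText} applied to $M$, and is injective by Lemma \ref{lemma:jthomIsInjective}) and then observe that the two compositions
\begin{align*}
    N \xrightarrow{\incs N} \tp(N) \hookrightarrow \Gamma
    \qquad \text{and} \qquad
    N \xrightarrow{\varphi \circ \incs N} \tp(N) \hookrightarrow \Gamma
\end{align*}
are both injective $J$-maps agreeing with $\iota$ on $M$ — for the second, this uses that $\varphi$ fixes $\tp(M)$ pointwise and that $i(M)$ sits inside $\tp(M) \subseteq \tp(N)$. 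By normality of $N$ these two maps must have the same image in $\Gamma$, and since $\tp(N) \hookrightarrow \Gamma$ is injective this forces $\varphi(\incs N(N)) = \incs N(N)$. The restriction $\varphi|_N$ is then an $R$-module automorphism of $N$ that fixes $M$ (because $M$ is fixed inside $\tp(M)$) and fixes $N[J]$ (because $\incs N(N[J])$ is contained in $T \subseteq \tp(M)$ and $\incs N$ is injective), so $\varphi|_N \in \Aut_{M+N[J]}(N)$.

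Injectivity is immediate: if $\varphi|_N = \id_N$, then $\varphi$ fixes both $\incs N(N)$ and $T = \tp(N)[J]$ pointwise, and since $\tp(N)$ is generated as an $R$-module by these two submodules (by its construction as a pushout), $\varphi = \id_{\tp(N)}$. For surjectivity, given $\psi \in \Aut_{M+N[J]}(N)$, the hypothesis that $\psi$ fixes $N[J]$ makes it into a morphism in $\jtcat(M,s)$ (compatibility with the $T$-pointing is automatic). Applying the saturation functor $\tp = (\incs M)_*$ yields an automorphism of $\tp(N)$ in $\jtcat(\tp(M))$, and the commutative square of Lemma \ref{lemma:pushforward2} reads $\tp(\psi) \circ \incs N = \incs N \circ \psi$, so $\tp(\psi)$ restricts to $\psi$ on $N$. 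That the overall map is a group homomorphism is then formal from the associativity of composition.
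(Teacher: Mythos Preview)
Your proof is correct and follows essentially the same route as the paper's. Both arguments use normality in exactly the same way for well-definedness: embed $\tp(N)$ (and hence $N$) into $\Gamma$, observe that $f$ and $f\circ\varphi$ restrict to injective $J$-maps $N\to\Gamma$ agreeing with $\iota$ on $M$, and conclude $\varphi(\incs N(N))=\incs N(N)$. For the bijectivity, the paper builds an explicit inverse via the universal property of the pushout, whereas you split into injectivity (using that $\tp(N)$ is generated by $\incs N(N)$ and $T$) and surjectivity (invoking the saturation functor $\tp=(\incs M)_*$ and Lemma~\ref{lemma:pushforward2}); these are the same construction packaged slightly differently, since applying $\tp$ to $\psi$ is exactly the pushout map the paper writes down by hand.
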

\begin{proof}
	Let us identify for simplicity $N$ with its image $\incs N(N)$ in $\tp(N)$,
	and let
	$\sigma\in\Aut_{\tp(M)}({\tp(N)})$. To see that the image
	of $\restr\sigma N$ is contained in $N$, let $f:\tp(N)\into \Gamma$ be a
	map of \jtextensions{} of $\tp(M)$, which is necessarily also a map
	of \jtextensions{} of $M$. Since $\tp(s)$ is an isomorphism, also
	$f\circ \sigma$ is a map of \jtextensions{} of $\tp(M)$, and since $N$
	is normal we have that the image of $N$ in $\Gamma$ under $f$ and under
	$f\circ\sigma$ are the same, which shows that $\sigma(N)=N$. Since this
	holds for both $\sigma$ and its inverse, we have that $\restr\sigma N\in
	\Aut_M(N)$, and clearly $\sigma$ is the indentity on $N[J]$.

	To show that the restriction to $N$ is an isomorphism, we construct an
	inverse. Let now $\sigma\in\Aut_{M+N[J]}(N)$, and recall that we can see it
	as a map of \jtextensions{} of $(M,s)$
	\begin{align*}
		\sigma: (N,t) \to (N, t\circ\restr{\sigma}{N[J]})\,.
	\end{align*}
	Composing it with $\incs N$ we get a map
	\begin{align*}
		\incs N\circ \sigma :(N,t) \to
			(\tp(N), (\incs N)_*(t\circ \restr{\sigma}{N[J]}))\,.
	\end{align*}
	Moreover, the map $\tp (i)$ is also a map of \jtextensions{}
	\begin{align*}
		\tp(i): (\tp(M),(\incs M)_*s) \to
			(\tp(N), (\incs N)_*(t\circ \restr{\sigma}{N[J]}))
	\end{align*}
	so by the universal property of the pushout there is a map of
	\jtextensions{}
	\begin{align*}
		\sigma':(\tp(N),(\incs N)_*t), 
			(\tp(N), (\incs N)_*(t\circ \restr{\sigma}{N[J]}))\,.
	\end{align*}
	It is straightforward to check that $\sigma\mapsto \sigma'$ provides
	an inverse for the restriction map $\Aut_{\tp(M)}(\tp(N))\to\Aut_M(N)$,
	which is then an isomorphism.
\end{proof}

\begin{proposition}
\label{prop:AutIsHom}
	Let $(N,i,t)$ be a \jtextension{} of $(M,s)$.  Then the map
	\begin{align*}
		\varphi: \Aut_{M+N[J]}(N) &\to
		\Hom\left(\frac{N}{i(M)+N[J]}, N[J]\right) \\
		\sigma & \mapsto \left(\varphi_\sigma:[n]\mapsto
		\sigma(n)-n\right)
	\end{align*}
	is an isomorphism of groups. In particular, $\Aut_{\tp(M)}(\tp(N))$ is abelian.
\end{proposition}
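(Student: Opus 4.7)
The plan is to verify directly that $\varphi$ is a well-defined group isomorphism, then obtain the abelian statement by applying it to the saturated pair $(\tp(M), \tp(N))$.

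First I would check that $\varphi_\sigma$ is a well-defined $R$-linear map $N/(i(M) + N[J]) \to N[J]$. Given $\sigma \in \Aut_{M+N[J]}(N)$ and $n \in N$, since $i: M \into N$ is a $J$-extension there exists $I \in J$ with $In \subseteq i(M)$; because $\sigma$ fixes $i(M)$ pointwise we obtain $I(\sigma(n) - n) = 0$, so $\sigma(n) - n \in N[J]$. As $\sigma$ also acts as the identity on $N[J]$, the map $n \mapsto \sigma(n)-n$ vanishes on $i(M) + N[J]$ and descends to the quotient, with $R$-linearity inherited from $\sigma$. For the group homomorphism property, a direct expansion gives
\begin{align*}
\varphi_{\sigma\tau}(n) = \sigma(\tau(n)-n) + (\sigma(n)-n) = \varphi_\tau(n) + \varphi_\sigma(n),
\end{align*}
where the second equality uses that $\tau(n) - n \in N[J]$ is fixed by $\sigma$.

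For bijectivity, injectivity is immediate: $\varphi_\sigma = 0$ forces $\sigma = \id_N$. For surjectivity I would construct an inverse explicitly: given $h \in \Hom(N/(i(M) + N[J]), N[J])$, set $\sigma_h(n) := n + h([n])$. Then $\sigma_h$ is $R$-linear because $h$ is, fixes $i(M) + N[J]$ pointwise because $h$ vanishes on their classes, and admits $n \mapsto n - h([n])$ as an inverse (using that $h([n]) \in N[J]$, so its class in the quotient is zero and hence $h$ vanishes on it). By construction $\varphi_{\sigma_h} = h$, completing the proof of bijectivity.

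Finally, for the abelian assertion I would apply the isomorphism just proven to the \jtextension{} $\tp(i): \tp(M) \into \tp(N)$ in place of $i: M \into N$. Since $\tp(N)$ is saturated we have $\tp(N)[J] = T = \tp(M)[J] \subseteq \tp(M)$, so every $\tp(M)$-automorphism of $\tp(N)$ automatically fixes $\tp(N)[J]$; thus $\Aut_{\tp(M)}(\tp(N)) = \Aut_{\tp(M)+\tp(N)[J]}(\tp(N))$, which is isomorphic to the abelian Hom-group $\Hom(\tp(N)/\tp(M), \tp(N)[J])$. The only delicate point is checking that the candidate $\sigma_h$ is genuinely an $R$-module automorphism; everything else is routine unwinding of definitions.
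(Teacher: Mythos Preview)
Your proposal is correct and follows essentially the same route as the paper: verify well-definedness and the homomorphism property using that $\sigma$ fixes $N[J]$, then construct the inverse $\sigma_h(n)=n+h([n])$. Your argument for bijectivity is marginally slicker (you exhibit a two-sided inverse directly, whereas the paper checks injectivity and surjectivity of $\sigma_h$ separately), and you spell out the ``in particular'' clause explicitly by applying the result to $\tp(i):\tp(M)\into\tp(N)$ and observing $\tp(N)[J]\subseteq\tp(i)(\tp(M))$, which the paper leaves implicit.
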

\begin{proof}
	We will denote by $[n]$ the class of an element $n\in N$ in
	$N/(i(M)+N[J])$.
	Notice that for any $\sigma\in \Aut_{M+N[J]}(N)$ we have
	$\sigma(n)-n\in N[J]$ by Lemma \ref{lemma:diffIsTorsion}, and
	$\varphi_\sigma$ is a homomorphism of $R$-modules. 
	To see that $\sigma\mapsto \varphi_\sigma$ is a group homomorphism, let
	$\sigma'\in\Aut_{M+N[J]}(N)$.
	Then, since $\sigma$ is the identity on $N[J]$ and $\sigma'(n)-n\in N[J]$,
	we have
	\begin{align*}
		\sigma(\sigma'(n))-n
		&= \sigma(\sigma'(n))-n +\sigma'(n)-n -\sigma(\sigma'(n)-n) \\
		&= \sigma(n)-n + \sigma'(n)-n
	\end{align*}
	which shows that $\varphi$ is a group homomorphism. It is also clearly
	injective, because if $\varphi_\sigma(n)=n$ then $\sigma$ must be the
	identity.

	To prove surjectivity it is enough to show that for any $R$-module
	homomorphism $h:N/(i(M)+N[J])\to N[J]$ the map
	\begin{align*}
		\sigma_h: N&\to N\\
		n&\mapsto n+h([n])
	\end{align*}
	which is clearly the identity on $i(M)+N[J]$, is an automorphism of $N$.
	It is injective, because if $n=-h([n])$ then in particular
	$n$ is torsion and thus $[n]=0$.
	It is also surjective, because for any $n\in N$ we have
	\begin{align*}
		\sigma_h(n-h([n])) 
		&= n-h([n]) + h([n-h([n])]) \\
		&= n- h([n] - [n+h([n])])\\
		&=n
	\end{align*}
\end{proof}

\begin{corollary}
	\label{cor:homSatQuotient}
	Let $(N,i,t)$ be a normal \jtextension{} of $M$. Denoting for
	simplicity by $\tp(M)$ the image of $\tp(M)$ inside $\tp(N)$ we have
	\begin{align*}
		\Aut_{\tp(M)}(\tp(N)) \cong
		\Hom\left(\frac{\tp(N)}{\tp(M)}, \jt(N)\right)\,.
	\end{align*}
\end{corollary}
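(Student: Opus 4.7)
The plan is to combine the two preceding propositions with a direct identification of the relevant quotient modules. First, Proposition \ref{prop:homFixTorsIsSat} applied to the normal \jtextension{} $(N,i,t)$ gives an isomorphism $\Aut_{\tp(M)}(\tp(N)) \cong \Aut_{M+N[J]}(N)$ via restriction along $\incs N\colon N\to\tp(N)$. Next, Proposition \ref{prop:AutIsHom} applied to the \jtextension{} $(N,i,t)$ of $(M,s)$ yields an isomorphism
$$\Aut_{M+N[J]}(N) \;\cong\; \Hom\!\left(\frac{N}{i(M)+N[J]},\, N[J]\right),$$
and since $\jt(N)=N[J]$ by definition, the target module on the right already has the correct form. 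What remains is to replace $N/(i(M)+N[J])$ by $\tp(N)/\tp(M)$.

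To do this I would unwind the pushout in Definition \ref{definition:saturationTpoint}: the $R$-module $\tp(N)$ is generated by the image of $N$ together with $T=\tp(N)[J]$, and the submodule $\tp(M)\subseteq\tp(N)$ (under the identification mentioned in the statement) is generated by $i(M)$ together with the same copy of $T$. This last fact follows from the functoriality of saturation, namely the construction of $\tp(i)\colon\tp(M)\to\tp(N)$ via the universal property of the pushout applied to $i\colon M\to N$ and $\id_T\colon T\to T$. Writing $\tp(N)=N+T$ and $\tp(M)=i(M)+T$ inside $\tp(N)$, the second isomorphism theorem then gives
$$\tp(N)/\tp(M) \;=\; (N+T)/(i(M)+T) \;\cong\; N/\bigl(N\cap(i(M)+T)\bigr).$$
Finally, since $N\cap T = N[J]$ (because $T=\tp(N)[J]$ and $N\hookrightarrow\tp(N)$) and $i(M)\subseteq N$, a standard modular-law argument yields $N\cap(i(M)+T)=i(M)+N[J]$, so $\tp(N)/\tp(M)\cong N/(i(M)+N[J])$.

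Composing the three isomorphisms produces the stated one, and it is worth noting that the composite map has the explicit and Galois-theoretically natural form $\sigma\mapsto\bigl([n]\mapsto\sigma(n)-n\bigr)$. The only mildly delicate point in the argument is the identification $\tp(M)=i(M)+T$ inside $\tp(N)$; everything else is formal bookkeeping with the second isomorphism theorem, so I expect no serious obstacle beyond that.
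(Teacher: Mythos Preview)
Your proposal is correct and follows essentially the same route as the paper: both combine Propositions \ref{prop:homFixTorsIsSat} and \ref{prop:AutIsHom} and then identify $N/(i(M)+N[J])$ with $\tp(N)/\tp(M)$. The only cosmetic difference is that the paper establishes this last isomorphism by writing down the surjection $N\to\tp(N)/\tp(M)$, $n\mapsto\incs N(n)+\tp(M)$, and computing its kernel directly, whereas you obtain the same conclusion via the second isomorphism theorem together with the modular law; these are two phrasings of the same computation.
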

\begin{proof}
	The claim follows from the two propositions above and the
	fact that
	\begin{align*}
		\frac{N}{i(M)+N[J]}\cong \frac{\tp(N)}{\tp(M)}\,.
	\end{align*}
	To see that the two quotients are isomorphic, consider the map
	$N\to \tp(N)/\tp(M)$ given by $n\mapsto \incs N(n) + \tp(M)$:
	its kernel is $i(M)+N[J]$ and it is surjective because
	$\tp(N)$ is generated by the images of $N$ and $T$.
\end{proof}

\begin{remark}
	Let $N$ be a \jtextension{} of $M$ and let $\sigma \in \Aut_M(N)$. The
	restriction of $\sigma$ to $N[J]$ is an element of $\Aut_{M[J]}(N[J])$.
	Indeed, the image of a $J$-torsion element under a map of \jtextensions{}
	is again a $J$-torsion element; since this is true for both $\sigma$ and
	$\sigma^{-1}$ we can conclude that $\restr{\sigma}{N[J]}:N[J]\to N[J]$ is
	an automorphism.
\end{remark}

\begin{lemma} \label{lemma:restr_surj}
	If $(N,i,t)$ is a normal \jtextension{} of $(M,s)$, the restriction map
	\[\Aut_M(N)\to \Aut_{M[J]}(N[J])\] is surjective.
\end{lemma}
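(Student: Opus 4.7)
The plan is to lift a given $\sigma\in\Aut_{M[J]}(N[J])$ to an automorphism $\tilde\sigma\in\Aut_M(N)$ by constructing a second embedding $g:N\hookrightarrow\Gamma$ alongside a fixed one $f:N\hookrightarrow\Gamma$ (which exists by maximality), arranging that $g|_{N[J]}=f\circ\sigma$, and then invoking normality to conclude $g(N)=f(N)$, so that $\tilde\sigma:=f^{-1}\circ g$ is a well-defined automorphism of $N$ with $\tilde\sigma|_{N[J]}=\sigma$.

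Concretely, I would first define $h:i(M)+N[J]\to\Gamma$ by $h(i(m)+n)=\iota(m)+f(\sigma(n))$ for $m\in M$ and $n\in N[J]$. To see $h$ is well-defined one checks that $i(M)\cap N[J]=i(M[J])$, so any overlap is controlled by the fact that $\sigma$ fixes $M[J]$ pointwise. Next, since $N/i(M)$ is $J$-torsion, so is $N/(i(M)+N[J])$, hence the inclusion $i(M)+N[J]\hookrightarrow N$ is a $J$-extension. Because $\Gamma$ is $J$-injective (Theorem \ref{theorem:existenceUniquenessMaximalJText}(3)), the map $h$ extends to a homomorphism $g:N\to\Gamma$, and by construction $g\circ i=\iota$ and $g|_{N[J]}=f\circ\sigma$.

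Then I would verify that $g$ is an injective $J$-map. The $J$-map property is automatic: $g(N)\supseteq \iota(M)$ and $\Gamma/\iota(M)$ is $J$-torsion since $\Gamma$ is a \jtextension{} of $M$. For injectivity, any $n\in\ker g$ satisfies $In\subseteq i(M)$ for some $I\in J$, hence $\iota(In)=g(In)=0$ forces $In=0$ by injectivity of $\iota$; thus $n\in N[J]$, and then $f(\sigma(n))=g(n)=0$ together with the injectivity of $f$ and $\sigma$ gives $n=0$. Now $f$ and $g$ are two injective $J$-maps $N\to\Gamma$ with $f\circ i=g\circ i=\iota$, so normality of $N$ yields $f(N)=g(N)$, and $\tilde\sigma:=f^{-1}\circ g$ is a well-defined $R$-automorphism of $N$. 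By construction $\tilde\sigma$ fixes $i(M)$ and restricts to $\sigma$ on $N[J]$; since $t\circ\sigma=t$ (as $\sigma$ is a map of \pointed{} modules), $\tilde\sigma$ lies in $\Aut_M(N)$, proving surjectivity. The main technical step, though routine, is the well-definedness of $h$ on the intersection $i(M)\cap N[J]$; everything else assembles cleanly from maximality, $J$-injectivity, and normality.
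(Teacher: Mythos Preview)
Your argument is correct and follows the same strategy as the paper's: produce a second embedding $g:N\hookrightarrow\Gamma$ that agrees with $\iota$ on $i(M)$ and with $f\circ\sigma$ on $N[J]$, then invoke normality to get $f(N)=g(N)$ and set $\tilde\sigma=f^{-1}\circ g$. The paper obtains $g$ more slickly by observing that $(N,i,t\circ\sigma)$ is again a \jtextension{} of $(M,s)$ and applying maximality of $\Gamma$ directly to this twisted pointing, whereas you build $g$ by hand by extending from $i(M)+N[J]$ via the $J$-injectivity of $\Gamma$; both routes yield a $g$ with the same key properties.

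One small slip: your final parenthetical ``$t\circ\sigma=t$ (as $\sigma$ is a map of \pointed{} modules)'' is false in general. Elements of $\Aut_{M[J]}(N[J])$ are merely $R$-module automorphisms of $N[J]$ fixing $i(M[J])$, with no compatibility with $t$ imposed; indeed, if $t\circ\sigma=t$ held then injectivity of $t$ would force $\sigma=\id$, trivializing the lemma. Fortunately this remark is unnecessary: in the paper $\Aut_M(N)$ simply denotes $R$-module automorphisms of $N$ fixing $i(M)$ pointwise, and you have already verified that $\tilde\sigma$ does this.
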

\begin{proof}
	Let $\sigma\in \Aut_{M[J]}(N[J])$. Notice that $(N,i,t\circ\sigma)$ is
	also a \jtextension{} of $M$, and let $f:(N,i,t)\into(\Gamma,\iota,\tau)$
	and $g:(N,i,t\circ\sigma)\into(\Gamma,\iota,\tau)$ be maps of
	\jtextensions{}. Since $N$ is normal we have $f(N)=g(N)$, thus
	$f^{-1}\circ g$ is an automorphism of $N$ that restricts to $\sigma$.
\end{proof}

The exact sequence appearing in the following theorem has been studied, in some
particular cases, in \cite{abtien}, \cite{palenstijn} and \cite{mypaper}.

\begin{theorem}
\label{thm:exactSequence}
	Let $M$ be a \pointed{} $R$-module and let $N$ be a normal \jtextension{}
	of $M$.  Then there is an exact sequence of groups
	\begin{align*}
		1\to \Hom\left(\frac{\tp(N)}{\tp(M)}, \jt(N)\right) \to
			\Aut_M(N) \to \Aut_{\jt(M)}(\jt(N))\to 1
	\end{align*}
	Moreover 
	$\Aut_{\jt(M)}(\jt(N))$ acts on $\Hom(\tp(N)/\tp(M),\jt(N))$ by composition.
\end{theorem}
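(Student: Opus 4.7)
The plan is to take the right-hand map to be restriction to the $J$-torsion, $\rho\colon\Aut_M(N)\to\Aut_{\jt(M)}(\jt(N))$, $\sigma\mapsto\restr{\sigma}{N[J]}$. This is well-defined and a group homomorphism by the remark preceding Lemma \ref{lemma:restr_surj}. Its surjectivity is exactly the content of Lemma \ref{lemma:restr_surj}, and this is the step where the normality hypothesis on $N$ is genuinely used: the argument there picks a fresh embedding of $N$ into $\Gamma$ realising the prescribed torsion automorphism, and invokes normality to identify its image with the original one. I regard this as the main obstacle, though it has already been settled in the excerpt.

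Next I would identify the kernel. An automorphism $\sigma\in\Aut_M(N)$ lies in $\ker\rho$ iff it restricts to the identity on $N[J]$; since it already fixes $i(M)$ pointwise, this is precisely the condition that $\sigma\in\Aut_{M+N[J]}(N)$. Corollary \ref{cor:homSatQuotient} then yields the isomorphism
\[ \Aut_{M+N[J]}(N)\cong\Hom\!\left(\tp(N)/\tp(M),\jt(N)\right). \]
Composing its inverse with the inclusion $\ker\rho\hookrightarrow\Aut_M(N)$ defines the left-hand map of the sequence, and exactness at the leftmost and middle terms is then immediate from the construction.

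For the moreover part, since the kernel is abelian (by Proposition \ref{prop:AutIsHom}), the conjugation action of $\Aut_M(N)$ on $\ker\rho$ descends to an action of the quotient $\Aut_{\jt(M)}(\jt(N))$. To identify this action with post-composition, I would lift $\tau\in\Aut_{\jt(M)}(\jt(N))$ to some $\tilde\tau\in\Aut_M(N)$ and, using the explicit description $\sigma\mapsto\varphi_\sigma$ of Proposition \ref{prop:AutIsHom}, compute
\[ (\tilde\tau\sigma\tilde\tau^{-1})(n)-n \;=\; \tilde\tau\bigl(\varphi_\sigma([\tilde\tau^{-1}(n)])\bigr). \]
Lemma \ref{lemma:diffIsTorsion} guarantees that $\tilde\tau$ acts trivially on $N/(i(M)+N[J])$, so $[\tilde\tau^{-1}(n)]=[n]$, and by construction $\tilde\tau$ restricts to $\tau$ on $N[J]$. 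Hence the conjugate of $\sigma$ corresponds under the isomorphism of Proposition \ref{prop:AutIsHom} to $\tau\circ\varphi_\sigma$, which is exactly the stated action by left composition. In particular, this shows the action is well-defined independently of the choice of lift $\tilde\tau$.
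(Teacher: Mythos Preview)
Your proof is correct and follows essentially the same approach as the paper: restriction to the $J$-torsion gives the right-hand surjection via Lemma~\ref{lemma:restr_surj}, its kernel is $\Aut_{M+N[J]}(N)$, and the identification with the $\Hom$ group comes from Proposition~\ref{prop:AutIsHom} together with the quotient isomorphism $N/(i(M)+N[J])\cong\tp(N)/\tp(M)$ (which is the content behind Corollary~\ref{cor:homSatQuotient}; strictly speaking that corollary is stated for $\Aut_{\tp(M)}(\tp(N))$, so you are implicitly also using Proposition~\ref{prop:homFixTorsIsSat}). Your explicit computation of the conjugation action is in fact more detailed than the paper's, which simply asserts that tracing through the isomorphisms yields composition and refers to an external source for the analogous calculation.
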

\begin{proof}
	By Lemma \ref{lemma:restr_surj} the map $\Aut_M(N)\to\Aut_{\jt(M)}(\jt(N))$
	is surjective and its kernel is $\Aut_{i(M)+N[J]}(N)$ by definition.
	By Proposition \ref{prop:homFixTorsIsSat} this group is isomorphic to
	$\Aut_{\tp(M)}(\tp(N))$ via the restriction under $\incs N: N\to\tp(N)$.
	Combining this with Corollary \ref{cor:homSatQuotient}
	we get the desired exact sequence.

	The fact that $\Aut_{\jt(M)}(\jt(N))$ acts on $\Aut_{i(M)+N[J]}$ by
	conjugation is a standard result on short exact sequences with abelian
	kernel, and one can trace this action under the isomorphisms described
	above to check that on $\Hom(\tp(N)/\tp(M),\jt(N))$ 
	this action is indeed the
	composition of maps, similarly to \cite[Proposition 3.12]{mypaper}.
\end{proof}

\section{Kummer theory for algebraic groups}
\label{section:kummer}

\subsection{General theory}

Let $K$ be a field and fix a separable closure $K_s$ of $K$.
Let $G$ be a commutative algebraic group over $K$, let $R\subseteq \End_K(G)$
be a subring of the ring of $K$-endomorphisms of $G$ and let
$M\subseteq G(K)$ be an $R$-submodule.
Let $J$ be a complete ideal filter of $R$, let
$T:=G(\Kbar)[J]$ and let $\Gamma:=\divm JM{G(\Kbar)}$.

We are interested in studying the field extension $K(\Gamma)$ of $K$,
that is the fixed field of the subgroup of $\Gal(K_s\mid K)$ that acts
trivially on $\Gamma$, and we want to do so using the theory of
\jtextensions{} introduced in the previous section.
A necesary and sufficient condition in order to proceed this way is
that $T=G(\Kbar)[J]$ be $J$-injective: indeed in this case
$\Gamma$ is a saturated, and thus normal, \jtextension{} of $M$.

\begin{remark}
	\label{remark:maximalorders}
	The condition that $T$ is $J$-injective for some, and in fact for all,
	ideal filters $J$, holds 
	for example if $G$ is a simple abelian variety with $R$ a maximal
	order in the division algebra $\End_{\Kbar}(G)\otimes \Q$.
	Indeed in this case every non-zero element
	$r$ of $R$ is surjective on $G(\Kbar)$, which implies that $T$ is
	divisible: if an element $u\in G(\Kbar)$ is such that $ru=t\in T$ and
	$I\in J$ is such that $It=0$, then since $I$ is a right ideal we
	have $Iu=0$, so $u\in T$; hence $r:T\to T$ is surjective and $T$ is
	divisible.

	It follows that $T$ is injective: this is a well-known statement if
	$R$ is a Dedekind domain, but the proof can be adapted
	to the non-commutative case as follows.
	Let $I$ be a left ideal of $R$ and let $f:I\to T$ be a map that we
	wish to extend to a map $\tilde f:R\to T$.
	By \cite[Theorem 22.7]{reiner} there is a right \emph{fractional}
	ideal $J$ of $R$ such that $IJ=R$ and $1\in JI\subseteq R$.
	In particular
	there are non-zero elements $b_1,\dots,b_n\in J$ and $a_1,\dots, a_n\in I$
	such that $\sum_{i=1}^nb_ia_i=1$, and since $T$ is divisible there are
	$x_1,\dots, x_n\in T$ such that $a_ix_i=f(a_i)$. It follows that for every
	$y\in I$ we have
	\begin{align*}
		f(y) = f\left(y\sum_{i=1}^nb_ia_i\right) = 
		       \sum_{i=1}^n(yb_i)f(a_i) = y\sum_{i=1}^n(b_ia_i)x_i
	\end{align*}
	and we can let $\tilde f(r)=r\sum_{i=1}^n(b_ia_i)x_i$ for
	every $r\in R$.
\end{remark}

Let us then assume that $T=G(\Kbar)[J]$ is $J$-injective,
so that $\Gamma$ is a saturated, therefore normal, \jtextension{} of $M$.
Then the standard exact sequence of groups
coming from the tower of Galois extensions $K\subseteq K(T) \subseteq
K(\Gamma)$ maps into the exact sequence \ref{thm:exactSequence} via the
Galois action on the points of $G$, and we obtain the following
commutative diagram of groups with exact rows:
\begin{equation*} \begin{tikzcd}
	1 \ar[r] & \Gal(K(\Gamma)\mid K(T)) \ar[d, hook, "\kappa"] \ar[r]
	         & \Gal(K(\Gamma)\mid K)    \ar[d, hook, "\rho"]   \ar[r]
			 & \Gal(K(T)\mid K)         \ar[d, hook, "\tau"]   \ar[r]
			 & 1 \\
	1 \ar[r] & \Hom\left(\frac{\Gamma}{\tp(M)}, T\right) \ar[r]
			 & \Aut_M(\Gamma) \ar[r]
			 & \Aut_{\jt(M)}(T) \ar[r]
			 & 1
\end{tikzcd} \end{equation*}

Notice that the action of $\Aut_{M[J]}(T)$ on $\Hom(\Gamma/(M+T),T)$
restricts to an action of $\im(\tau)$ on $\im(\kappa)$.

\begin{definition}
	In the situation described above we will call the maps $\kappa$, $\tau$
	and $\rho$ the \emph{Kummer representation}, the
	\emph{torsion representation} and the \emph{torsion-Kummer representation},
	respectively.
\end{definition}

As in Section \ref{section:duality}, if $N$ and $P$ are $R$-modules and
$S$ is a subset of $\Hom_R(N,P)$ we let $\ker(S)=\bigcap_{f\in S}\ker(f)$.

\begin{theorem}
	\label{thm:sesCohomology}
	There is an exact sequence of abelian groups
	\begin{align*}
		0\to \frac{\divm{J}{\tp(M)}{\tp(G(K))}}{\tp(M)} \to
			 \ker(\im(\kappa)) \to
			 H^1(\im(\tau),T)
	\end{align*}
\end{theorem}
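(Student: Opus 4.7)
The plan is to construct the map $\ker(\im(\kappa)) \to H^1(\im(\tau), T)$ as a cocycle map and then identify its kernel with the left-hand term. I start by unwinding the first kernel: an element $[x] \in \Gamma/\tp(M)$ lies in $\ker(\im(\kappa))$ if and only if $\kappa(\sigma)([x]) = \sigma(x) - x = 0$ for every $\sigma \in \Gal(K(\Gamma)\mid K(T))$, which is to say that $x$ is fixed by $\Gal(K(\Gamma)\mid K(T))$, i.e.\ $x \in G(K(T))$. So $\ker(\im(\kappa)) = (\Gamma \cap G(K(T)))/\tp(M)$.

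For $x \in \Gamma \cap G(K(T))$ I define $\phi_x: \Gal(K(T)\mid K) \to T$ by $\phi_x(\sigma) = \sigma(x) - x$. To see that the values lie in $T$, pick $I \in J$ with $Ix \subseteq M$; then $I(\sigma(x) - x) = \sigma(Ix) - Ix = 0$ since $M \subseteq G(K)$ is fixed by $\sigma$, so $\sigma(x) - x \in G(\Kbar)[I] \subseteq T$. The cocycle relation is routine, and the class $[\phi_x] \in H^1(\im(\tau), T)$ depends only on $[x]$ modulo $\tp(M) = M + T$: changing $x$ by an element of $M$ does not alter $\phi_x$ (since $M$ is $\sigma$-fixed), while changing by $t \in T$ alters $\phi_x$ by the coboundary $\sigma \mapsto \sigma(t) - t$. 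This gives the desired map.

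To compute its kernel, $[\phi_x] = 0$ if and only if there exists $t \in T$ with $\sigma(x) - x = \sigma(t) - t$ for all $\sigma$, equivalently $x - t \in G(K(T))^{\Gal(K(T)\mid K)} = G(K)$, i.e.\ $x \in G(K) + T = \tp(G(K))$. Thus the kernel equals $(\Gamma \cap \tp(G(K)))/\tp(M)$.

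It remains to show $\Gamma \cap \tp(G(K)) = \divm{J}{\tp(M)}{\tp(G(K))}$, which I expect to be the main obstacle. The inclusion $\subseteq$ is clear since $M \subseteq \tp(M)$. For the reverse inclusion --- which is where the completeness of $J$ enters essentially --- take $x \in \tp(G(K))$ with $Ix \subseteq \tp(M)$ for some $I \in J$, and write $x = g + t$ with $g \in G(K)$ and $t \in T$. Then $Ig = Ix - It \subseteq (M + T) \cap G(K) = M + G(K)[J]$, so in $G(K)/M$ the class $\bar g$ satisfies $I\bar g \subseteq (G(K)/M)[J] = \divm{J}{0}{G(K)/M}$. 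Hence $\bar g \in \divm{J}{\divm{J}{0}{G(K)/M}}{G(K)/M}$, which by completeness of $J$ equals $\divm{J}{0}{G(K)/M}$ itself; thus $\bar g$ is $J$-torsion, i.e.\ $g \in \divm{J}{M}{G(K)} = \Gamma \cap G(K)$, so $x = g + t \in \Gamma \cap \tp(G(K))$. This identifies the kernel of the second map with the first term of the sequence, and the first map of the sequence is then the (tautologically injective) inclusion $\divm{J}{\tp(M)}{\tp(G(K))}/\tp(M) \into \Gamma/\tp(M)$.
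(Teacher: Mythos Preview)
Your proof is correct and follows the same route as the paper: identify $\ker(\im(\kappa))$ with $(\Gamma\cap G(K(T)))/\tp(M)$, send $x$ to the cocycle $\sigma\mapsto\sigma(x)-x$, and compute the kernel of the resulting map to $H^1(\im(\tau),T)$ as $(\Gamma\cap\tp(G(K)))/\tp(M)$.

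The one place where you go beyond the paper is the identification $\Gamma\cap\tp(G(K))=\divm{J}{\tp(M)}{\tp(G(K))}$. The paper simply writes ``noticing that'' this equality holds, whereas you supply the argument, and you are right that it is not a triviality: the inclusion $\supseteq$ genuinely requires completeness of $J$, exactly as you use it (passing from $I\bar g\subseteq (G(K)/M)[J]$ to $\bar g\in (G(K)/M)[J]$). So your version is, if anything, more complete than the paper's on this point.
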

\begin{proof}
	By Lemma \ref{lemma:diffIsTorsion} for any $b\in G(K(T))$ we may define a map
	\begin{align*}
		\varphi_b:\im(\kappa)&\to T\\
		\sigma & \mapsto \sigma(b)-b
	\end{align*}
	which is a cocycle. It follows that the map
	\begin{align*}
		\varphi: G(K(T)) & \to H^1(\im(\tau),T)\\
		b & \mapsto \varphi_b
	\end{align*}
	is a group homomorphism. Moreover its kernel is
	\begin{align*}
		\ker(\varphi)
			&=\set{b\in G(K(T))\mid\varphi_b\text{ is a coboundary}}\\
			&=\set{b\in G(K(T))\mid\exists\,t\in T\text{ such that }
				\sigma(b)-b=\sigma(t)-t\,\forall\,\sigma\in\im(\kappa)}\\
			&=\set{b\in G(K(T))\mid\exists\,t\in T\text{ such that }
				\sigma(b-t)=b-t\,\forall\,\sigma\in\im(\kappa)}\\
			&=G(K)+T
	\end{align*}
	so that we have an exact sequence
	\begin{align*}
		0\to G(K)+T\to G(K(T))\to H^1(\im(\tau),T)
	\end{align*}
	and considering the intersection of the first two terms with $\Gamma$ we get
	\begin{align*}
		0\to \Gamma\cap(G(K)+T)\to\Gamma\cap G(K(T))\to H^1(\im(\tau),T)\,.
	\end{align*}
	Since $M+T\subseteq \Gamma\cap(G(K)+T)$ we also have
	\begin{align*}
		0\to \frac{\Gamma\cap(G(K)+T)}{M+T}\to
		     \frac{\Gamma\cap G(K(T))}{M+T}\to H^1(\im(\tau),T)\,.
	\end{align*}
	Rewriting $M+T=\tp(M)$ and $G(K)+T=\tp(G(K))$, noticing that
	\[\Gamma\cap\tp(G(K))=\divm{J}{\tp(M)}{\tp(G(K))}\] and that
	\begin{align*}
		\ker(\im(\kappa))
			&=\set{x\in\frac{\Gamma}{M+T}\mid f(x)=0\,\forall\,f\in\im(\kappa)}\\
			&=\frac{\set{\tilde x\in\Gamma\mid
				\sigma(\tilde x)=\tilde x\,\forall\,\sigma\in\im(\kappa)}}{M+T}\\
			&=\frac{\Gamma\cap G(K(T))}{M+T}
	\end{align*}
	we get the desired exact sequence.
\end{proof}

The following theorem generalizes \cite[Theorem 5.9]{mypaper}.
\begin{theorem}
	\label{thm:main}
	Assume that the $\End(T)$-submodule of $\Hom(\Gamma/\tp(M),T)$
	generated by $\im(\kappa)$ is finitely generated. If
	the following three conditions hold
	\begin{enumerate}
		\item There is a positive integer $d$ such that
		      \[ d\cdot\divm{J}{\tp(M)}{\tp(G(K))} \subseteq \tp(M) \]
		\item There is a positive integer $n$ such that
		      \[ n\cdot H^1(\im(\tau),T) = 0 \]
		\item There is a positive integer $m$ such that the subring
		      of $\End(T)$ generated by $\im(\tau)$ contains
		      \[ m\cdot \End(T) \]
	\end{enumerate}
	then $\im(\kappa)$ contains $dnm\cdot \Hom(\Gamma/\tp(M),T)$.
\end{theorem}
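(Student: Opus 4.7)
The plan is to combine the exact sequence of Theorem \ref{thm:sesCohomology} with the duality of Proposition \ref{prop:duality1}: I will first show that $dn \cdot \Hom(\Gamma/\tp(M),T)$ is contained in the $\End(T)$-submodule $V$ generated by $\im(\kappa)$, and then use condition (3) to replace $V$ by $\im(\kappa)$ itself at the cost of an extra factor of $m$.

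First I would extract from Theorem \ref{thm:sesCohomology} that $dn$ annihilates $\ker(\im(\kappa)) \subseteq \Gamma/\tp(M)$: condition (1) says $d$ annihilates the left-hand term $\divm{J}{\tp(M)}{\tp(G(K))}/\tp(M)$ and condition (2) says $n$ annihilates $H^1(\im(\tau),T)$, so $dn$ annihilates any group squeezed between these two, in particular the middle term of the exact sequence. Given any $f \in \Hom(\Gamma/\tp(M),T)$, the map $dn\cdot f$ then vanishes on $\ker(\im(\kappa))$ and so factors through the quotient $(\Gamma/\tp(M))/\ker(\im(\kappa))$. Let $V$ denote the $\End(T)$-submodule of $\Hom(\Gamma/\tp(M),T)$ generated by $\im(\kappa)$; by hypothesis $V$ is finitely generated, and its joint kernel equals $\ker(\im(\kappa))$ because a joint kernel is determined by any set of generators. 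Proposition \ref{prop:duality1} then identifies $V$ with $\Hom\bigl((\Gamma/\tp(M))/\ker(\im(\kappa)),T\bigr)$, so the factorisation above shows $dn\cdot f \in V$, i.e. $dn\cdot \Hom(\Gamma/\tp(M),T) \subseteq V$.

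The second step is to prove $m\cdot V \subseteq \im(\kappa)$. The key observation is that the conjugation action of $\im(\tau)$ on $\im(\kappa)$ furnished by Theorem \ref{thm:exactSequence} is given simply by post-composition: indeed any $\tilde\tau \in \Gal(K(\Gamma)\mid K)$ is an $R$-module endomorphism of $\Gamma$ fixing $M$, so Lemma \ref{lemma:diffIsTorsion} forces $\tilde\tau(x)-x \in T \subseteq \tp(M)$ for every $x \in \Gamma$; hence $\tilde\tau$ acts trivially on $\Gamma/\tp(M)$ and the conjugation $f \mapsto \tilde\tau \circ f \circ \tilde\tau^{-1}$ reduces to $f \mapsto \tau \circ f$. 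Thus $\im(\kappa)$ is stable under the natural $\Z[\im(\tau)]$-action coming from the $\End(T)$-action on $\Hom(\Gamma/\tp(M),T)$. Any element of $V$ can be written as $\sum_i e_i \circ f_i$ with $e_i \in \End(T)$ and $f_i \in \im(\kappa)$, and condition (3) gives $m e_i \in \Z[\im(\tau)]$ for each $i$, so
\[
    m \sum_i e_i \circ f_i = \sum_i (m e_i)\circ f_i \in \im(\kappa)\,.
\]
Chaining the two inclusions yields $dnm\cdot \Hom(\Gamma/\tp(M),T) \subseteq \im(\kappa)$.

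The main obstacle I anticipate is the identification of the $\im(\tau)$-action on $\im(\kappa)$ as plain left composition: without this, condition (3) would not suffice to pass from $V$ to $\im(\kappa)$, since a priori condition (3) only controls the one-sided $\End(T)$-action. Once the triviality of the Galois action on $\Gamma/\tp(M)$ is in hand (which is a clean consequence of Lemma \ref{lemma:diffIsTorsion} applied to $\tilde\tau$ and $\id_\Gamma$), the remainder is a formal assembly of the cohomological exact sequence, the duality theorem, and the ring-theoretic condition (3).
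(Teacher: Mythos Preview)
Your proof is correct and follows essentially the same route as the paper: bound $\ker(\im(\kappa))$ by $(\Gamma/\tp(M))[dn]$ via Theorem~\ref{thm:sesCohomology}, apply Proposition~\ref{prop:duality1} to get $dn\cdot\Hom(\Gamma/\tp(M),T)\subseteq V$, and then use condition~(3) together with the $\im(\tau)$-stability of $\im(\kappa)$ to obtain $m\cdot V\subseteq\im(\kappa)$. The only difference is cosmetic: where the paper simply invokes the ``composition'' action from Theorem~\ref{thm:exactSequence}, you re-derive it directly from Lemma~\ref{lemma:diffIsTorsion}, which is a perfectly valid (and arguably cleaner) way to justify that $\im(\kappa)$ is a module over the subring of $\End(T)$ generated by $\im(\tau)$.
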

\begin{proof}
	Let $V$ be the $\End(T)$-submodule of $\Hom(\Gamma/\tp(M), T)$
	generated by $\im(\kappa)$ and let $X=\Gamma/\tp(M)$. From (1) and (2)
	it follows that $\ker(V)=\ker(\im\kappa)\subseteq X[dn]$.
	Since $V$ is finitely generated as an $\End(T)$-module, by Proposition
	\ref{prop:duality1} we have
	\begin{align*}
		V = \Hom\left(\frac{X}{\ker(V)}, T\right) \supseteq
		    \Hom\left(\frac{X}{X[dn]}, T\right) \supseteq
		    dn\cdot \Hom(X,T)\,.
	\end{align*}
	Since $\im(\kappa)$ is an $\im(\tau)$-module, we have
	\begin{align*}
		\im(\kappa) = \im(\tau)\cdot \im(\kappa) \supseteq
		m\cdot \End(T)\cdot \im(\kappa) = m\cdot V \supseteq
		dnm\cdot\Hom(X,T)
	\end{align*}
	and we conclude.
\end{proof}

\subsection{Elliptic curves over number fields}

We keep the notation of the previous section and we further assume
that $K$ is a number field, that $G=E$ is an elliptic curve and
that $R=\End_K(E)$.
In particular we have that $K_s=\Kbar$ and that $R$ is either $\Z$ or an
order in an imaginary quadratic number field.
Up to replacing $K$ by an extension of degree
$2$ we may assume that $\End_K(E)=\End_{\Kbar}(E)$.

Notice that $T=E(\Kbar)[J]$ is contained in $E(\Kbar)_{\tors}$: indeed, if
$x\in T$ then there is $I\in J$ such that $Ix=0$. Since $R$ is an
order in a number field there is some non-zero integer $n\in I$, so
$nx=0$ and $x$ is torsion.

\begin{proposition}
	The $R$-module $E(\Kbar)[J]$ is $J$-injective.
\end{proposition}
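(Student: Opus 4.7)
The plan is to apply Baer's criterion (Proposition \ref{prop:baerCriterion}): it suffices to show that, for every two-sided ideal $I \in J$ and every $R$-linear map $f \colon I \to T$, there is an extension to an $R$-linear map $R \to T$. Since $R$ is commutative (either $\Z$ or an order in an imaginary quadratic field) and Noetherian as a finitely generated $\Z$-algebra, any such $I$ is finitely generated.

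The strategy is to extend $f$ first to $E(\Kbar)$ itself and then use the completeness of $J$ to force the extension to take values in $T$. The key input is that every non-zero $r \in R$ is a non-trivial endomorphism of $E$, hence an isogeny and therefore surjective on $\Kbar$-points, so $E(\Kbar)$ is $R$-divisible. When $R$ is the maximal order in its fraction field (so a Dedekind domain), divisibility is equivalent to injectivity and one concludes immediately that $E(\Kbar)$ is an injective $R$-module. For non-maximal orders I would argue the same conclusion via the $\mathfrak{p}$-primary decomposition $E(\Kbar)_{\tors} = \bigoplus_{\mathfrak{p}} E[\mathfrak{p}^\infty]$ and the concrete structure of the Tate module $T_\mathfrak{p}(E)$ over $R_\mathfrak{p}$ for each non-zero prime $\mathfrak{p}$ of $R$.

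Granted the $R$-injectivity of $E(\Kbar)$, composing $f$ with the inclusion $T \hookrightarrow E(\Kbar)$ and extending produces an $R$-linear map $g \colon R \to E(\Kbar)$. Setting $t := g(1)$, we have $at = g(a) = f(a) \in T$ for every $a \in I$, so $It \subseteq T$ and hence $t \in \divm{I}{T}{E(\Kbar)} \subseteq \divm{J}{T}{E(\Kbar)}$. By the completeness of $J$ together with $T = \divm{J}{0}{E(\Kbar)}$,
\[
\divm{J}{T}{E(\Kbar)} = \divm{J}{\divm{J}{0}{E(\Kbar)}}{E(\Kbar)} = \divm{J}{0}{E(\Kbar)} = T,
\]
so $t \in T$, the map $g$ factors through the inclusion $T \hookrightarrow E(\Kbar)$, and this factorisation is the sought-after extension of $f$.

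The hard part will be rigorously establishing the $R$-injectivity of $E(\Kbar)$ when $R$ is a non-maximal order, since outside the Dedekind setting the implication \emph{divisible $\Rightarrow$ injective} is not automatic. My plan there is to work one prime $\mathfrak{p}$ at a time, exploiting that each $E[\mathfrak{p}^\infty]$ is already divisible over $R_\mathfrak{p}$ and that the $R_\mathfrak{p}$-module structure of $T_\mathfrak{p}(E)$ is well understood in terms of the conductor of $R$, in order to verify the vanishing of $\mathrm{Ext}^1_R(R/I, E(\Kbar))$ needed for injectivity.
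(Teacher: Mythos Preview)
Your overall strategy---extend $f$ to a map into a larger injective $R$-module and then use completeness of $J$ to force the image back into $T$---is correct. In fact the second half of your argument (the displayed chain $\divm{J}{T}{E(\Kbar)} = \divm{J}{\divm{J}{0}{E(\Kbar)}}{E(\Kbar)} = T$) is precisely the content of Lemma~\ref{lemma:InjectiveSub}, specialised to $P=0$ and run through Baer's criterion rather than the direct definition. The paper simply invokes that lemma instead of re-deriving it.

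Where you diverge from the paper is in the choice of ambient injective module and in how you justify its injectivity. The paper works inside $E(\Kbar)_{\tors}$ rather than all of $E(\Kbar)$, and for the injectivity of $E(\Kbar)_{\tors}$ as an $R$-module---including the non-maximal-order case---it cites \cite[Proposition~5.1]{lenstra1996complex}. That reference is exactly the ``hard part'' you flag at the end: Lenstra's argument already carries out the prime-by-prime analysis of $E[\mathfrak p^{\infty}]$ over $R_{\mathfrak p}$ that you propose. So your plan is viable, but it amounts to reproving Lenstra's result; once you know that reference, the whole proof collapses to one line of Lemma~\ref{lemma:InjectiveSub}. Note also that your sketch for the non-maximal case speaks only about the torsion part $E(\Kbar)_{\tors}$, so it would in any case more naturally establish injectivity of $E(\Kbar)_{\tors}$ than of $E(\Kbar)$; since $T\subseteq E(\Kbar)_{\tors}$, that is already enough, and the detour through $E(\Kbar)$ is unnecessary.
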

\begin{proof}
	By \cite[Proposition 5.1]{lenstra1996complex}
	the $R$-module $E(\Kbar)_{\tors}$ is injective, thus in particular
	$J$-injective. Since $E(\Kbar)[J]=\divm{J}{0}{E(\Kbar)_{\tors}}$ it
	follows from Lemma \ref{lemma:InjectiveSub} that
	$E(\Kbar)[J]$ is $J$-injective.
\end{proof}

\begin{remark}
	Although not necessary for our applications, it is interesting to notice
	that in this setting $\Gamma$ is a maximal \jtextension{} of $M$.
	Indeed $E(\Kbar)/E(\Kbar)_{\tors}$ is a torsion-free module over
	the commutative integral domain $R$, so it is injective.
	Then the short exact sequence of $R$-modules
	\begin{align*}
		0\to E(\Kbar)_{\tors} \to E(\Kbar) \to E(\Kbar)/E(\Kbar)_{\tors}\to 0
	\end{align*}
	splits, so that $E(\Kbar)\cong E(\Kbar)/T\oplus T$ as $R$-modules and
	since $R$ is Noetherian it follows that $E(\Kbar)$ is injective.
	As in the above Proposition we may conclude that $\Gamma$ is $J$-injective,
	thus it is a maximal \jtextension{} of $M$.
\end{remark}

We now specialize to the case $J=\infty$.
\begin{remark} \label{remark:inftyIsN}
	Notice that in case $J=\infty$ we have $T=G(\Kbar)_{\tors}$ and
	\[
		\Gamma =
		\set{x\in E(\Kbar)\mid nx\in M\text{ for some } n\in\Z_{>0}}\,.
	\]
	If $R=\Z$ then $\End_R(T)$ is isomorphic, after fixing an isomorphism
	$T\cong (\Q/\Z)^2$, to $\Mat_{2\times 2}(\hat\Z)$. If $R$ is instead an
	order
	in an imaginary quadratic field then $\End_R(T)\cong R\otimes_\Z\hat\Z$.
	Indeed, fix for every prime $p$
	a $\Z_p$-basis for $R_p:=R\otimes_\Z\Z_p$ and consider the
	$\hat\Z$-subalgebra $C=\prod_pC_p$ of
	$\Mat_{2\times2}(\hat \Z)=\prod_p\Mat_{2\times2}(\Z_p)$,
	where $C_p$ is the image of the embedding of $R_p$ into
	$\Mat_{2\times 2}(\Z_p)$ given by its multiplication action on
	the $\Z_p$-module $\Z_p^2\cong R_p$. Then $R\otimes_\Z\hat\Z\cong C$ is
	a $\hat\Z$-algebra free of rank $2$ as a $\hat\Z$-module, since
	every $C_p$ is a $\Z_p$-algebra of rank $2$. Then for a suitable
	choice of an isomorphism $T\cong (\Q/\Z)^2$ we have
	\begin{align*}
		\End_R(T)&=\set{\varphi\in \End_\Z(T)\mid f(r(t))=r(f(t))\,
			\forall r\in R,\,t\in T}\\
			&=\set{\varphi\in\Mat_{2\times 2}(\hat\Z)\mid fc=cf\,
			\forall c\in C}\\
			&= C
	\end{align*}
	where the last equality follows by applying the Centralizer Theorem
	to the central simple $\Q_p$-subalgebra $R\otimes_\Z\Q_p$ of
	$\Mat_{2\times2}(\Q_p)$ and then restricting the coefficients to $\Z_p$.

	In both cases, the map $\tau$ coincides with the usual Galois representation
	associated with the torsion of $E$.
\end{remark}

\begin{proposition}
	Assume that the abelian group structure of $E(K)$ is known and that
	$M$ is given in terms of set of generators for $E(K)$.
	Then there exists an effectively computable positive integer $d$ such that
	\[ d\cdot\divm{\infty}{\tp(M)}{\tp(G(K))} \subseteq \tp(M)\,. \]
\end{proposition}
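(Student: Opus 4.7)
My plan is to unfold the definitions and reduce to a standard computation in a finitely generated abelian group. Since $J=\infty$, we have $T=E(\Kbar)_{\tors}$, and the saturations $\tp(M)$ and $\tp(G(K))$, viewed as submodules of $E(\Kbar)$, are simply $M+T$ and $E(K)+T$ respectively (because in this concrete setting the pushout in the category of \pointed{} $R$-modules of Definition \ref{definition:saturationTpoint} is realized as the sum inside $E(\Kbar)$, noticing that $E(K)[J]=E(K)_{\tors}\subseteq T$). Consequently, an element of $\divm{\infty}{\tp(M)}{\tp(G(K))}$ is an element $x=g+t$ with $g\in E(K)$ and $t\in T$ such that $ng+nt\in M+T$, equivalently $ng\in M+T$ for some positive integer $n$.

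Next I would pass to the quotient $F := E(K)/E(K)_{\tors}$. By the Mordell--Weil theorem $E(K)$ is finitely generated, hence $F$ is a free abelian group of finite rank (and, when $R$ is an order in an imaginary quadratic field, an $R$-module). Writing $\bar{g}$ for the image of $g$ in $F$ and $\bar M$ for the image of $M$, the condition $ng\in M+T$ becomes $n\bar g\in \bar M$, since $T\cap E(K)=E(K)_{\tors}$. Hence the image in $F$ of $\divm{\infty}{\tp(M)}{\tp(G(K))}$ is exactly the saturation of $\bar M$ in $F$, i.e.\ the preimage in $F$ of the torsion subgroup of $F/\bar M$.

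Now $F/\bar M$ is a finitely generated abelian group, so its torsion subgroup has a finite exponent $d$, and by construction $d\cdot\mathrm{sat}_F(\bar M)\subseteq \bar M$. Pulling back to $E(\Kbar)$: for any $x=g+t$ as above, $d\bar g\in \bar M$ gives $dg\in M+E(K)_{\tors}\subseteq M+T$, hence $dx=dg+dt\in M+T=\tp(M)$, as required.

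Finally, the effectivity: knowing the abelian group structure of $E(K)$ means knowing generators and relations of $E(K)$ (equivalently, its rank $r$, the invariant factors of $E(K)_{\tors}$, and a basis modulo torsion); together with an explicit set of generators for $M$ this determines $\bar M\subseteq F\cong\Z^r$ by an explicit integer matrix. The torsion exponent $d$ of $F/\bar M$ is then computed by a Smith normal form calculation on this matrix, which is effective. The only mild subtlety to check is that in the CM case one may work with $\bar M$ as a $\Z$-submodule of $F$ without loss of generality, since the saturation in the sense of \jtextensions{} (division by positive integers) depends only on the underlying abelian group structure; I do not expect this to present a real obstacle.
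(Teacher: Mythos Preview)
Your proposal is correct and follows the same approach as the paper: you identify $\tp(M)=M+T$ and $\tp(G(K))=E(K)+T$ inside $E(\Kbar)$ and then reduce to the exponent of the torsion subgroup of the finitely generated abelian group $(E(K)/E(K)_{\tors})/\bar M$, which is exactly what the paper's proof does (the paper simply cites \cite[\S6.1]{mypaper} for this last computation, whereas you spell it out via Smith normal form). Your remark on the CM case is also right: since $J=\infty$ is generated by nonzero integers, the relevant divisibility conditions depend only on the underlying abelian group.
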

\begin{proof}
	First of all notice that $\tp(M)=M+T$ and $\tp(G(K))=G(K)+T$ seen
	as subgroups of $E(\Kbar)$. We conclude thanks to
	the considerations of \cite[\S6.1]{mypaper}.
\end{proof}

\begin{proposition}
	There exists an effectively computable positive integer $n$ such that
	\[ n\cdot H^1(\im(\tau),T) = 0\,. \]
\end{proposition}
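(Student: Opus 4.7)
The plan is to exploit the primary decomposition $T = \bigoplus_\ell T_\ell$, with $T_\ell := E(\Kbar)[\ell^\infty]$, which induces a direct sum decomposition
\[
H^1(\im(\tau), T) \;=\; \bigoplus_\ell H^1(\im(\tau_\ell), T_\ell),
\]
where $\tau_\ell$ denotes the $\ell$-adic torsion representation. Since each $T_\ell$ is $\ell$-primary, the exponent of the $\ell$-th summand is a power of $\ell$; hence producing an effective $n$ amounts to bounding the exponent of $H^1(\im(\tau_\ell), T_\ell)$ for each $\ell$ and showing that this cohomology vanishes for all but finitely many $\ell$.

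For the almost-all-$\ell$ vanishing I would invoke Sah's lemma: if $c$ is a central element of $\im(\tau_\ell)$ and $c - 1$ is invertible on $T_\ell$, then $H^1(\im(\tau_\ell), T_\ell) = 0$. By Remark \ref{remark:inftyIsN} the image $\im(\tau_\ell)$ is contained in $(R\otimes_\Z \Z_\ell)^\times$, which is commutative in the CM case, and in $\GL_2(\Z_\ell)$ with center $\Z_\ell^\times\cdot \Id$ in the non-CM case. The results from \cite{lt} alluded to at the start of this section guarantee that, outside a finite and effectively computable set of primes, $\im(\tau_\ell)$ contains the scalar $-\Id$; this element is central, and $(-\Id)-\Id=-2\cdot\Id$ is invertible on $T_\ell$ whenever $\ell$ is odd, so Sah forces the cohomology to vanish.

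For the finitely many exceptional primes one uses that $\im(\tau_\ell)$ is a compact $\ell$-adic Lie group acting continuously on the discrete module $T_\ell$. Applying inflation--restriction to the open normal subgroup consisting of elements acting trivially on $T_\ell[\ell^k]$ for $k$ sufficiently large reduces the computation to the cohomology of a finite group with finite coefficients; this stabilizes with $k$ and yields an explicit bound on the exponent in terms of the stabilization level and the order of the kernel. This is exactly the strategy used for the non-CM case in \cite[\S5]{lt}, and it adapts to the CM setting without essential change once the image $\im(\tau_\ell)$ is known, which is precisely the data on which the constant $n$ is allowed to depend. The main obstacle is the bookkeeping at the bad primes: both the exceptional set and the exponent at each of its elements must be read off from concrete information about the $\ell$-adic representations, so the argument is genuinely effective only in conjunction with that input.
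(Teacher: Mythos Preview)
Your overall strategy is the right one and mirrors what the cited references do: Sah's lemma for almost all $\ell$, then an explicit exponent bound at the finitely many bad primes. The gap is in your first displayed equation. The primary decomposition of $T$ only gives
\[
H^1(\im(\tau), T) \;=\; \bigoplus_\ell H^1(\im(\tau), T_\ell),
\]
and replacing $\im(\tau)$ by $\im(\tau_\ell)$ on each summand is not automatic. By inflation--restriction the discrepancy is measured by $\Hom_{\mathrm{cont}}(N_\ell, T_\ell)^{\im(\tau_\ell)}$, where $N_\ell=\ker\bigl(\im(\tau)\to\im(\tau_\ell)\bigr)$ is a closed subgroup of $\prod_{p\neq\ell}\im(\tau_p)$. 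Since the finite quotients $\GL_2(\F_p)$ (or $(R/pR)^\times$ in the CM case) have order divisible by $\ell$ for infinitely many $p$, this term has no reason to vanish, and your Sah argument, applied only inside $\im(\tau_\ell)$, does not by itself control $H^1(\im(\tau),T_\ell)$. In particular, an element of $\im(\tau)$ projecting to $-\Id$ in $\im(\tau_\ell)$ need not be central in $\im(\tau)$.

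The fix, which is what the paper invokes via \cite[Proposition~6.3, Corollary~6.8, Proposition~6.12]{mypaper}, is to run Sah's lemma on $\im(\tau)$ itself. Those results locate non-trivial \emph{homotheties} $c\cdot\Id$ with $c\in\hat\Z^\times$ inside the full adelic image $\im(\tau)$; scalars are central in $\Aut_R(T)$ in both the CM and non-CM cases, so $(c-1)$ annihilates $H^1(\im(\tau),T_\ell)$ for every $\ell$ at once. The primes with $c\equiv 1\pmod\ell$ form the finite bad set, and there the $\ell$-adic valuation of $c-1$ (together with further homotheties if needed) yields the effective exponent bound. So your Sah step should be carried out one level up, with a global central element, rather than prime by prime; cf.\ also Remark~\ref{remark:av}.
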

\begin{proof}
	This follows from \cite[Proposition 6.3]{mypaper} and
	\cite[Corollary 6.8]{mypaper} in the non-CM case and from
	\cite[Proposition 6.12]{mypaper} in the CM case.
\end{proof}

\begin{proposition}
	There exists an effectively computable positive integer $m$ such that
	the subring of $\End_R(T)$ generated by $\im(\tau)$
	contains $m\cdot \End_R(T)$.
\end{proposition}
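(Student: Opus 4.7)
The plan is to reduce the statement to a prime-by-prime analysis, exploiting the product decomposition from Remark \ref{remark:inftyIsN}: $\End_R(T)=\prod_p A_p$, where $A_p=\Mat_{2\times 2}(\Z_p)$ in the non-CM case and $A_p=R_p:=R\otimes_\Z\Z_p$ in the CM case. The (closed) subring of $\End_R(T)$ generated by $\im(\tau)$ is a closed $\hat\Z$-subalgebra, hence decomposes as $\prod_p S_p$ with each $S_p$ a closed $\Z_p$-subalgebra of $A_p$ (using the orthogonal idempotents of $\hat\Z$ corresponding to the primes). It thus suffices to find, for every prime $p$, an effectively computable non-negative integer $k_p$, with $k_p=0$ for all but finitely many $p$, such that $p^{k_p}A_p\subseteq S_p$; then $m:=\prod_p p^{k_p}$ has the desired property.

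First I would invoke an effective open image theorem to produce an integer $N$ such that $\im(\tau)$ contains the principal congruence subgroup of level $N$ inside $\End_R(T)^*$. In the non-CM case this is the content of the effective Serre-type results in \cite{lt}; in the CM case, analogous effective bounds on the index of $\im(\tau)$ in $(R\otimes\hat\Z)^*$ can be extracted from the same reference or from \cite{mypaper}. Setting $k_p:=v_p(N)$, one has $k_p=0$ for every prime $p\nmid N$, and only finitely many $k_p$ are nonzero.

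Next I would analyze $S_p$ for each $p$. If $k_p=0$, then the projection of $\im(\tau)$ to the $p$-factor equals $A_p^*$, and the $\Z_p$-subring generated by $A_p^*$ is all of $A_p$: in the non-CM case, $E_{ij}=(\Id+E_{ij})-\Id$ and diagonal idempotents arise as products of such elementary matrices; in the CM case, $R_p^*$ already $\Z_p$-spans $R_p$. If $k_p\geq 1$, then the $p$-projection of $\im(\tau)$ contains $1+p^{k_p}A_p$, whose elements are invertible (their determinants are $\equiv 1\pmod p$), and subtracting the identity of $S_p$ yields $p^{k_p}A_p\subseteq S_p$. Combining these local inclusions via the product decomposition gives $\prod_p S_p\supseteq N\cdot\End_R(T)$, so $m:=N$ works.

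The main obstacle is extracting an explicit effective $N$; the rest of the argument is essentially formal. In the non-CM case, \cite{lt} provides such an $N$ depending only on the images of the $\ell$-adic Galois representations attached to $E$. In the CM case, the simpler structure of $R_p^*$ --- the unit group of a quadratic commutative $\Z_p$-order --- reduces the question to effective bounds again controlled by the $\ell$-adic representations, fitting the hypotheses of the main theorem, so the same reference or \cite{mypaper} suffices.
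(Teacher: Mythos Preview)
Your approach is correct and is essentially what the paper's terse proof unpacks to: an effective open-image theorem for $\tau$ gives a level $N$ such that $\im(\tau)$ contains the principal congruence subgroup, and then the (closed) $\hat\Z$-subalgebra it generates contains $N\cdot\End_R(T)$. The paper simply cites \cite[Corollary~6.8]{mypaper} in the non-CM case and \cite[Theorem~1.5]{lombardo2017galois} in the CM case; note in particular that the effective CM input comes from Lombardo's paper, not from \cite{mypaper} or \cite{lt}, so you should adjust that citation.

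One small slip worth flagging: your claim that ``$R_p^\ast$ already $\Z_p$-spans $R_p$'' is not always true. If $p=2$ splits in the CM order then $R_2\cong\Z_2\times\Z_2$, and every unit has both coordinates odd, so the $\Z_2$-span of $R_2^\ast$ (and indeed the $\Z_2$-subring it generates) is only $\{(x,y):x\equiv y\bmod 2\}$. This is harmless for the conclusion---it just contributes an extra factor of $2$ to $m$---but the sentence as written is false and should be weakened to ``$R_p^\ast$ generates a $\Z_p$-subalgebra of bounded index in $R_p$''.
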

\begin{proof}
	This follows again from \cite[Corollary 6.8]{mypaper} in the case
	$R=\Z$ and from \cite[Theorem 1.5]{lombardo2017galois} in
	the CM case.
\end{proof}

\begin{theorem}
	\label{thm:Main}
	Assume that the abelian group structures of $E(K)$ and $M$ are 
	effectively computable.
	Then there exists an effectively computable positive constant $c$ such that
	the index of $\im(\kappa)$ in $\Hom(\Gamma/\tp(M),T)$ divides c.
\end{theorem}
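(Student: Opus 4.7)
The plan is to deduce the theorem by combining the three preceding propositions with Theorem \ref{thm:main}, applied in the case $J=\infty$. Concretely, I would first invoke those propositions to produce effectively computable positive integers $d$, $n$, and $m$ satisfying conditions (1), (2), and (3) of Theorem \ref{thm:main} respectively; effectivity of each is inherited from the explicit results in \cite{mypaper} and \cite{lombardo2017galois} (and from the assumed knowledge of the abelian group structures of $E(K)$ and $M$, which in particular determines the rank $r:=\rk_R(M)$ and pins down the torsion image $\im(\tau)$ up to the effective data of the $\ell$-adic representations).

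Second, to apply Theorem \ref{thm:main} I must verify its finite-generation hypothesis, namely that the $\End_R(T)$-submodule of $\Hom_R(\Gamma/\tp(M),T)$ generated by $\im(\kappa)$ is finitely generated. It is cleaner to show the stronger claim that $\Hom_R(\Gamma/\tp(M),T)$ itself is finitely generated over $\End_R(T)$. Since $M\subseteq E(K)$ is finitely generated by the Mordell--Weil theorem, the quotient $\Gamma/\tp(M) = \Gamma/(M+T)$ is isomorphic to a divisible torsion $R$-module whose underlying abelian group is a finite direct sum of copies of $\Q/\Z$ (coming from $(M\otimes_\Z\Q)/M$ after removing torsion). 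Using the explicit description of $\End_R(T)$ from Remark \ref{remark:inftyIsN}, either $\Mat_{2\times 2}(\hat\Z)$ or $R\otimes_\Z\hat\Z$, both of which are Noetherian, the required finite generation follows.

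Third, Theorem \ref{thm:main} then yields the inclusion
\[
 dnm \cdot \Hom_R(\Gamma/\tp(M),T) \;\subseteq\; \im(\kappa) \;\subseteq\; \Hom_R(\Gamma/\tp(M),T).
\]
Hence the index $[\Hom_R(\Gamma/\tp(M),T):\im(\kappa)]$ divides the (finite) cardinality of the quotient $\Hom_R(\Gamma/\tp(M),T)/dnm\cdot\Hom_R(\Gamma/\tp(M),T)$. Under the explicit identification above, this cardinality is a computable expression in $dnm$ and $r$ (for instance $(dnm)^{2r}$ in the non-CM case); setting $c$ to be this cardinality gives the desired effectively computable constant.

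The routine steps are really the plugging-in; the only genuine obstacle is verifying the Noetherian hypothesis on $\Hom_R(\Gamma/\tp(M),T)$, which rests on identifying $\Gamma/\tp(M)$ as a finite direct sum of Prüfer groups and on the explicit structure of $\End_R(T)$. Once these two ingredients are in place, the conclusion is a formal consequence of Theorem \ref{thm:main} and the three propositions.
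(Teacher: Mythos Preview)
Your strategy matches the paper's one-line proof exactly: combine the three preceding propositions with Theorem \ref{thm:main}. Your additional step of explicitly checking the finite-generation hypothesis of Theorem \ref{thm:main} is a welcome piece of care that the paper's proof omits.

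There is, however, a slip in that verification: neither $\Mat_{2\times 2}(\hat\Z)$ nor $R\otimes_\Z\hat\Z$ is a Noetherian ring, since $\hat\Z=\prod_p\Z_p$ already fails Noetherianity (the ideal of sequences vanishing at all but finitely many primes is not finitely generated, and Noetherianity passes to and from matrix rings and finite free extensions). So while your claim that $\Hom_R(\Gamma/\tp(M),T)$ is finitely generated over $\End_R(T)$ is correct, you cannot deduce from this alone that the submodule generated by $\im(\kappa)$ is finitely generated. One can repair the argument by observing that $\im(\kappa)$ is closed in the natural profinite topology on $\Hom_R(\Gamma/\tp(M),T)$, being the continuous image of a compact Galois group, and closed submodules of finitely generated profinite $\hat\Z$-modules are finitely generated; alternatively, work prime by prime over the genuinely Noetherian local rings $R\otimes_\Z\Z_p$. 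The paper itself glosses over this point (and makes the same Noetherianity claim in the example following Theorem \ref{thm:duality}), so your instinct to fill the gap was right even if the execution needs adjustment.
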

\begin{proof}
	This is a direct consequence of Theorem \ref{thm:main} and the three
	propositions above.
\end{proof}

\begin{remark}
	\label{remark:av}
	Since Theorem \ref{thm:main} is stated in a fairly general form, one might
	wonder if it can be applied to obtain a version of Theorem \ref{thm:Main}
	for higer-dimensional abelian varieties.

	Provided that one is in, or can reduce to, a case in which $T$ is a
	$J$-injective $R$-module (for example if the abelian variety is simple
	and its endomorphism ring is a maximal order in a division algebra,
	see Remark \ref{remark:maximalorders}), the key steps are finding effective
	bounds for the integers $n$ and $m$ of Theorem \ref{thm:main}.
	Effective bounds for $m$ are known, see for example
	\cite[Théorème 1.5(2)]{remond}.

	It is also known (see \cite{lt}) that a bound for $n$ can be obtained
	by finding explicit homotheties in $\im(\tau)$. This seems a harder problem
	to tackle, but one can hope to reduce to finding homotheties in the
	images of the $\ell$-adic representations, as done in \cite[\S 7]{lt}.
	Explicit results on the existence of homotheties in the image of
	$\ell$-adic representations attached to abelian varieties are
	obtained for example in \cite{galateau}.
\end{remark}

\bibliographystyle{acm}
\bibliography{biblio}

\end{document}